\newtheorem{theorem}{Theorem}[section]
\newtheorem{lemma}[theorem]{Lemma}
\newtheorem{proposition}[theorem]{Proposition}
\theoremstyle{definition}
\newtheorem{definition}[theorem]{Definition}
\newtheorem{remark}[theorem]{Remark}
\newtheorem{question}[theorem]{Question}
\newcommand{\mdim}{{\rm mdim}}
\newcommand{\ord}{{\rm ord}}
\newcommand{\cB}{{\mathcal B}}
\newcommand{\cW}{{\mathcal W}}
\newcommand{\cD}{{\mathcal D}}
\newcommand{\cC}{{\mathcal C}}
\newcommand{\cU}{{\mathcal U}}
\newcommand{\cQ}{{\mathcal Q}}
\newcommand{\cR}{{\mathcal R}}
\newcommand{\cS}{{\mathcal S}}
\newcommand{\cZ}{{\mathcal Z}}
\newcommand{\cV}{{\mathcal V}}
\newcommand{\Zb}{{\mathbb Z}}
\newcommand{\Rb}{{\mathbb R}}
\newcommand{\Nb}{{\mathbb N}}
\newcommand{\sB}{{\mathscr B}}
\newcommand{\sA}{{\mathscr A}}
\newcommand{\sD}{{\mathscr D}}
\newcommand{\sE}{{\mathscr E}}
\newcommand{\sF}{{\mathscr F}}
\newcommand{\diam}{{\rm diam}}
\newcommand{\rM}{{\rm M}}
\newcommand{\Sym}{{\rm Sym}}
\newcommand{\Map}{{\rm Map}}
\newcommand{\mesh}{{\rm mesh}}
\begin{document}

\title{Sofic Mean Dimension}

\author{Hanfeng Li}
\thanks{Partially supported by NSF Grants DMS-0701414 and DMS-1001625.}

\address{\hskip-\parindent
Department of Mathematics, Chongqing University,
Chongqing 401331, China.
Department of Mathematics, SUNY at Buffalo,
Buffalo NY 14260-2900, U.S.A.}
\email{hfli@math.buffalo.edu}

\keywords{Sofic group, amenable group, mean dimension, topological entropy, small-boundary property}

\date{May 8, 2013}

\begin{abstract}
We introduce mean dimensions for continuous actions of countable sofic groups on compact metrizable spaces. These generalize the Gromov-Lindenstrauss-Weiss mean dimensions for actions of countable amenable groups, and are useful for distinguishing continuous actions of countable sofic groups with infinite entropy.
\end{abstract}

\maketitle

\section{Introduction} \label{S-introduction}

Mean dimension was introduced by Gromov \cite{Gromov} about a decade ago, as an analogue of dimension for dynamical systems, and was studied systematically
by Lindenstrauss and Weiss \cite{LW} for continuous actions of countable amenable groups on compact metrizable spaces.
Among many beautiful results they obtained, it is especially notable that they used mean dimension to show that there exits a minimal action of $\Zb$ on some compact metrizable space
which can not be equivariantly embedded into $[0, 1]^\Zb$ equipped with the shift $\Zb$-action.
Mean dimension is further explored in \cite{Coornaert, CK, Gutman, Krieger06,  Krieger09, Lindenstrauss}.

The notion of sofic groups was also introduced by Gromov \cite{Gromov99} around the same time.
The class of sofic groups include all discrete amenable groups and residually finite groups, and it is still an open question whether every group
is sofic. For some nice exposition on sofic groups, see \cite{CC, ES05, ES06, Pestov, Weiss00}.
Using the idea of counting sofic approximations, in \cite{Bowen10} Bowen
defined entropy for measure-preserving actions of countable sofic groups on probability measure spaces,
when there exists a countable generating partition with finite Shannon entropy. Together with David Kerr, in \cite{KL11, KerLi10amenable} we extended Bowen's measure entropy to all measure-preserving actions of countable sofic groups on standard probability measure spaces, and defined topological entropy for
continuous actions of countable sofic groups on compact metrizable spaces. The sofic measure entropy and sofic topological entropy are related by the variational principle \cite{KL11}. Furthermore, the sofic entropies coincide with the classical entropies when the group is amenable \cite{Bowen10a, KerLi10amenable}.

The goal of this article is to extend mean dimension to continuous actions of countable sofic groups $G$ on compact metrizable spaces $X$. In order to define
sofic mean dimension,
we use some approximate actions of $G$ on finite sets as models, and
replace $X$ by certain spaces of approximately $G$-equivariant maps from the finite sets to $X$, which appeared first in the definition of sofic topological entropy \cite{KerLi10amenable}. A novelty here is that we replace open covers of $X$ by certain open covers on these map spaces.

Lindenstrauss and Weiss studied two kinds of mean dimensions for actions of countable amenable groups in \cite{LW}, one is topological, as the analogue of the covering dimension, and the other is metric, as the analogue of the lower box dimension.

We define sofic mean topological dimension and establish some basic properties in Section~\ref{S-sofic mean topological dim}, and show that it coincides with the Gromov-Lindenstrauss-Weiss mean topological dimension when the group is amenable in Section~\ref{S-top amenable}. Similarly, we discuss sofic metric mean dimension in Sections~\ref{S-sofic metric mean dim} and \ref{S-metric amenable}. It is shown in Section~\ref{S-comparison} that  sofic mean topological dimension is always bounded above by  sofic metric mean dimension. We calculate sofic mean dimensions for some Bernoulli shifts and show that every non-trivial factor of the shift action of $G$ on $[0, 1]^G$ has positive sofic mean dimensions in Section~\ref{S-shifts}. In the last section, we show that
actions with small-boundary property have zero or $-\infty$ sofic mean topological dimension.

To round up this section, we fix some notation.

\begin{definition} \label{D-sofic group}
For $d\in \Nb$, we write $[d]$ for the set $\{1, \dots, d\}$ and
 $\Sym(d)$ for the permutation group
of $[d]$. A countable group $G$ is called {\it sofic} if there is a {\it sofic approximation sequence} $\Sigma= \{ \sigma_i : G \to \Sym (d_i ) \}_{i=1}^\infty$ for $G$, namely the following three conditions are satisfied:

\begin{enumerate}
\item for any $s, t\in G$, one has $\lim_{i\to \infty}\frac{|\{a\in [d_i]: \sigma_i(s)\sigma_i(t)(a)=\sigma_i(st)(a)\}|}{d_i}=1$;

\item for any distinct $s, t\in G$, one has $\lim_{i\to \infty}\frac{|\{a\in [d_i]: \sigma_i(s)(a)=\sigma_i(t)(a)\}|}{d_i}=0$;

\item $\lim_{i\to \infty}d_i=+\infty$.
\end{enumerate}
For a map $\sigma$ from $G$ to $\Sym(d)$ for some $d\in \Nb$, we write $\sigma(s)(a)$ as $\sigma_s(a)$ or $sa$, when there is no confusion.
We say that $\sigma$ is a {\it good enough sofic approximation} for $G$ if for some large finite subset $F$ of $G$ which will be clear from the context,
one has $\frac{|\{a\in [d]: \sigma(s)\sigma(t)(a)=\sigma(st)(a)\}|}{d}$ very close to $1$ for all  $s, t \in F$ and $\frac{|\{a\in [d]: \sigma(s)(a)=\sigma(t)(a)\}|}{d}$ very close to $0$ for all distinct $s, t\in F$.
\end{definition}

Throughout this paper, $G$ will be a countable sofic group with identity element $e_G$, and we fix a sofic approximation sequence $\Sigma$ for $G$.

{\it Acknowledgements.}  I thank Yonatan Gutman and David Kerr for very helpful discussions, and  Lewis Bowen and Elon Lindenstrauss for comments. I am grateful to the referee for extremely helpful comments, which improved the paper greatly.

\section{Sofic Mean Topological Dimension} \label{S-sofic mean topological dim}

In this section we define the sofic mean topological dimension and establish some basic properties.

We start with recalling the definitions of covering dimension of compact metrizable spaces and
mean topological dimension for actions of countable amenable groups.
For a compact space $Y$ and two finite open covers $\cU$ and $\cV$ of $Y$, we say that $\cV$ {\it refines} $\cU$, and write $\cV\succ \cU$, if every element of $\cV$ is contained in some element of $\cU$.

\begin{definition} \label{D-order}
Let $Y$ be a compact space and $\cU$ a finite open cover of $Y$. We denote
$$\ord(\cU)=\max_{y\in Y}\sum_{U\in \cU}1_U(y)-1,  \mbox{ and } \cD(\cU)=\min_{\cV\succ \cU}\ord(\cV),$$
where $\cV$ ranges over finite open covers of $Y$ refining $\cU$.
\end{definition}

For a compact metrizable space $X$, its {\it (covering) dimension} $\dim(X)$ is defined as
$\sup_{\cU}\cD(\cU)$ for $\cU$ ranging over finite open covers of $X$.

\begin{definition} \label{D-amenable}
A countable group $G$ is called {\it amenable} if for any finite subset $K$ of $G$ and any $\varepsilon>0$  there exists a finite subset $F$ of $G$ with $|KF\setminus F|<\varepsilon |F|$. Equivalently, $G$ has  a {\it left F{\o}lner sequence} $\{F_n\}_{n\in \Nb}$, i.e. each $F_n$ is a nonempty finite subset of $G$ and $\frac{|sF_n\setminus F_n|}{|F_n|}\to 0$ as $n\to 0$  for every $s\in G$.
\end{definition}

Let a  countable amenable group $G$ act continuously on a compact metrizable space $X$. Let $\cU$ be a finite open  cover of $X$.
For a nonempty finite subset $F$ of $G$, we set $\cU^F=\bigvee_{s\in F}s^{-1}\cU$. The function $F\mapsto \cD(\cU^F)$ defined on the set of nonempty finite subsets of $G$ satisfies
the conditions of the Ornstein-Weiss lemma \cite{OW} \cite[Theorem 6.1]{LW}, thus $\frac{\cD(\cU^F)}{|F|}$ converges to some real number, denoted by $\mdim(\cU)$, as $F$ becomes more and more left invariant. That is, for any $\varepsilon>0$, there exist a nonempty finite subset $K$ of $G$ and $\delta>0$ such that $|\frac{\cD(\cU^F)}{|F|}-\mdim(\cU)|<\varepsilon$ for every nonempty finite subset $F$ of $G$ satisfying $|KF\setminus F|<\delta |F|$.
In terms of any left F{\o}lner sequence $\{F_n\}_{n\in \Nb}$ of $G$, one has
$$ \mdim(\cU)=\lim_{n\to \infty}\frac{\cD(\cU^{F_n})}{|F_n|}.$$
The {\it mean topological dimension} of $X$ \cite[page 13]{LW} is defined as
$$\mdim(X)=\sup_{\cU}\mdim(\cU),$$
where $\cU$ ranges over finite open covers of $X$.

Throughout the rest of this section, we fix a countable sofic group $G$ and a sofic approximation sequence $\Sigma = \{ \sigma_i : G \to \Sym (d_i ) \}_{i=1}^\infty$ for $G$, as defined in Section~\ref{S-introduction}.
Let $\alpha$ be a continuous action of $G$ on a compact metrizable space $X$.

Let $\rho$ be a continuous pseudometric on $X$.
For a given $d\in\Nb$, we define on the set of all maps from $[d]$ to $X$ the pseudometrics
\begin{align*}
\rho_2 (\varphi , \psi ) &= \bigg( \frac{1}{d} \sum_{a\in [d]} (\rho (\varphi (a),\psi (a)))^2 \bigg)^{1/2} , \\
\rho_\infty (\varphi ,\psi ) &= \max_{a\in [d]} \rho (\varphi (a),\psi (a)) .
\end{align*}

\begin{definition}\label{D-map top}
Let $F$ be a nonempty finite subset of $G$ and $\delta > 0$.
Let $\sigma$ be a map from $G$ to $\Sym (d)$ for some $d\in\Nb$.
We define $\Map (\rho ,F,\delta ,\sigma )$ to be the set of all maps $\varphi : [d] \to X$ such that
$\rho_2 (\varphi\circ\sigma_s , \alpha_s \circ\varphi ) \le \delta$ for all $s\in F$. We consider $\Map (\rho ,F,\delta ,\sigma )$ to be a topological space
with the topology inherited from $X^d$.
\end{definition}

The space $\Map (\rho ,F,\delta ,\sigma )$ appeared first in \cite[Section 2]{KerLi10amenable}, and was used to define the topological entropy of the action $\alpha$. Eventually we shall take $\sigma$ to be $\sigma_i$ for large $i$. Then the condition (1) in the definition of $\Sigma$ says that $\sigma$ is approximately a group homomorphism of $G$ into $\Sym(d)$, and therefore we can think of $\sigma$  as an approximate action of $G$ on $[d]$. The space
$\Map (\rho ,F,\delta ,\sigma )$ is the set of approximately $G$-equivariant maps from $[d]$ into $X$.

For a finite open cover $\cU$ of $X$, we denote by $\cU^d$ the finite open cover of $X^{[d]}$ consisting of $U_1\times U_2\times \cdots \times U_d$ for $U_1, \dots, U_d\in \cU$.
Note that $\Map(\rho, F, \delta, \sigma)$ is a closed subset of $X^{[d]}$. Consider the restriction
$\cU^d|_{\Map(\rho, F, \delta, \sigma)}=\cU^d\cap \Map(\rho, F, \delta, \sigma)$ of $\cU^d$ to $\Map(\rho, F, \delta, \sigma)$.
Denote $\cD(\cU^d|_{\Map(\rho, F, \delta, \sigma)})$ by $\cD(\cU, \rho, F, \delta, \sigma)$.

The set $[d]$ is the analogue of an approximately left invariant finite subset $H$ of $G$ in the amenable group case, $\Map(\rho, F, \delta, \sigma)$ is the analogue of the subset $\{(sx)_{s\in H}: x\in X\}$ of $X^H$ which can be identified with $X$ naturally (this will be made clear in the proof of Theorem~\ref{T-top mean dim} below), $\cU^d|_{\Map(\rho, F, \delta, \sigma)}$ is then the analogue of $\cU^H$,
and $\cD(\cU, \rho, F, \delta, \sigma)$ is the analogue of $\cD(\cU^H)$.

\begin{definition} \label{D-sofic top mean dim}
Let $\rho$ be a compatible metric on $X$.
Let $F$ be a nonempty finite subset of $G$ and
$\delta > 0$. For a finite open cover $\cU$ of $X$ we define
\begin{align*}
\cD_\Sigma(\cU, \rho ,F, \delta ) &=
\varlimsup_{i\to\infty} \frac{\cD(\cU, \rho, F, \delta, \sigma_i)}{d_i},\\
\cD_\Sigma(\cU, \rho ,F ) &= \inf_{\delta > 0} \cD_\Sigma(\cU, \rho ,F, \delta ),\\
\cD_\Sigma(\cU, \rho)&= \inf_{F} \cD_\Sigma(\cU, \rho ,F),
\end{align*}
where $F$ in the third line ranges over the nonempty finite subsets of $G$.
If $\Map (\rho ,F,\delta ,\sigma_i )$ is empty for all sufficiently large $i$, we set
$\cD_\Sigma(\cU, \rho ,F, \delta ) = -\infty$.
We define the {\it sofic mean topological dimension} of $\alpha$  as
$$ \mdim_\Sigma (X, \rho ) = \sup_{\cU} \cD_\Sigma(\cU, \rho)$$
for $\cU$ ranging over  finite open covers of $X$.
As shown by Lemma~\ref{L-sofic top mean dim independent of metric} below, the quantities
$\cD_\Sigma(\cU, \rho ,F ), \cD_\Sigma(\cU, \rho)$ and $\mdim_\Sigma(X, \rho)$ do not depend on the choice of $\rho$, and we shall write them as
$\cD_\Sigma(\cU, F ), \cD_\Sigma(\cU)$ and $\mdim_\Sigma(X)$ respectively. In particular, $\mdim_\Sigma(\cdot)$ is an invariant of topological dynamical systems.
\end{definition}

\begin{remark} \label{R-mean top dim1}
Note that $\cD_\Sigma(\cU, \rho ,F, \delta )$ decreases when $\delta$ decreases and $F$ increases. Thus in the definitions of $\cD_\Sigma(\cU, \rho ,F )$ and $\cD_\Sigma(\cU, \rho)$ one can also replace $\inf_{\delta>0}$ and $\inf_F$ by $\lim_{\delta\to 0}$ and $\lim_{F\to \infty}$ respectively, where $F_1\le F_2$ means  $F_1\subseteq F_2$. If we partially order the set of all such $(F, \delta)$ by $(F, \delta)\ge (F', \delta)$ when $F\supseteq F'$ and $\delta\le \delta'$,
then
$$ \cD_\Sigma(\cU, \rho)= \lim_{(F, \delta)\to \infty} \cD_\Sigma(\cU, \rho ,F, \delta).$$
\end{remark}

\begin{remark} \label{R-mean top dim2}
From Definition~\ref{D-sofic top mean dim} and \cite[Proposition 2.4]{KerLi10amenable} one gets that the following conditions are equivalent:
\begin{enumerate}
\item $\mdim_\Sigma(X)\ge 0$;

\item For any finite subset $F$ of $G$, any $\delta>0$, and any $N\in \Nb$, there is some $i\ge N$ such that $\Map(\rho, F, \delta, \sigma_i)$ is nonempty;

\item The sofic topological entropy $h_\Sigma(X)\ge 0$.
\end{enumerate}
Also, by the variational principle \cite[Theorem 6.1]{KL11}, these conditions imply the following
\begin{enumerate}
\item[(4)] $X$ has a $G$-invariant Borel probability measure.
\end{enumerate}
Every non-amenable countable group has a continuous affine action on a compact metrizable convex set (in some locally convex Hausdorff topological vector space) admitting no fixed point, equivalently, admitting no invariant Borel probability measure \cite[Corollary 4.10.2]{CC}. Thus every non-amenable sofic group has a continuous action on some compact metrizable space with mean topological dimension $-\infty$. However, we do not know whether there is any such an example with a unique invariant Borel probability measure.
\end{remark}

\begin{remark} \label{R-sequence}
The definitions of $\mdim_\Sigma(X), \mdim_{\Sigma, \rM}(X, \rho)$ in the sequel and $\mdim_{\Sigma, \rM}(X)$ depend on the choice of the sofic approximation sequence $\Sigma$.
However, we do not know any examples for which different choices of $\Sigma$ lead to different values of these invariants, though Lewis Bowen \cite{Bowen09} showed that
the sofic measure entropy of the trivial action of ${\rm SL}(n, \Zb)$ for $n\ge 2$ (more generally groups with property ($\tau$)) on the two-point set equipped with the uniform distribution does depend on the choice of $\Sigma$.
\end{remark}

We need the following simple observation several times.

\begin{lemma} \label{L-almost}
Let $\rho$ be a continuous pseudometric on $X$, $F$ a nonempty finite subset of $G$, $\delta>0$, and $\sigma$ a map from $G$ to $\Sym(d)$ for some $d\in \Nb$.
For any $\varphi\in \Map(\rho, F, \delta, \sigma)$ and $s\in F$, setting $\cW=\{a\in [d]: \rho(s\varphi(a), \varphi(sa))\le \sqrt{\delta}\}$, one has
$|\cW|\ge (1-\delta)d$.
\end{lemma}
\begin{proof} This follows from
$$ \delta^2\ge (\rho_2(\alpha_s\circ \varphi, \varphi\circ \sigma_s))^2\ge \frac{1}{d}|[d]\setminus \cW|\delta=(1-\frac{|\cW|}{d})\delta.$$
\end{proof}

\begin{lemma} \label{L-sofic top mean dim independent of metric}
Let $\rho$ and $\rho'$ be compatible metrics on $X$. For any nonempty finite subset $F$ of $G$ and any finite open cover $\cU$ of $X$, one has
$\cD_\Sigma(\cU, \rho ,F )=\cD_\Sigma(\cU, \rho' ,F )$.
\end{lemma}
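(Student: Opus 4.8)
The plan is to prove the two-sided inequality $\cD_\Sigma(\cU, \rho, F) \le \cD_\Sigma(\cU, \rho', F)$; by the symmetric roles of $\rho$ and $\rho'$ this suffices. The crux is a comparison of the map spaces $\Map(\rho, F, \delta, \sigma)$ and $\Map(\rho', F, \delta', \sigma)$. Since $X$ is compact, both metrics are bounded, say by some $M>0$ (if $X$ is a single point the statement is trivial), and the identity map $(X,\rho)\to(X,\rho')$ is uniformly continuous, so given $\varepsilon>0$ there is $\delta_0>0$ with $\rho(x,y)\le\delta_0\Rightarrow\rho'(x,y)\le\varepsilon$. The subtlety is that smallness of $\rho_2$ does not force the termwise distances $\rho(\varphi(a),\psi(a))$ to be small, so uniform continuity cannot be applied coordinatewise. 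I would circumvent this with a Chebyshev-type truncation: for maps $\varphi,\psi:[d]\to X$ the set $B=\{a\in[d]:\rho(\varphi(a),\psi(a))>\delta_0\}$ satisfies $|B|/d\le\rho_2(\varphi,\psi)^2/\delta_0^2$, and splitting the sum defining $\rho'_2(\varphi,\psi)^2$ over $B$ and its complement yields $\rho'_2(\varphi,\psi)^2\le\varepsilon^2+M^2\rho_2(\varphi,\psi)^2/\delta_0^2$. Hence if $\rho_2(\varphi,\psi)\le\delta_0\varepsilon/M$ then $\rho'_2(\varphi,\psi)\le\sqrt{2}\,\varepsilon$.

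Applying this with $\varphi$ replaced by $\varphi\circ\sigma_s$ and $\psi$ by $\alpha_s\circ\varphi$ for each $s\in F$ gives: for any $\delta'>0$, setting $\varepsilon=\delta'/\sqrt{2}$, choosing $\delta_0$ from uniform continuity, and putting $\delta:=\delta_0\varepsilon/M$, one has the inclusion $\Map(\rho,F,\delta,\sigma)\subseteq\Map(\rho',F,\delta',\sigma)$ for every map $\sigma:G\to\Sym(d)$ and every $d$.

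Next I would record the elementary monotonicity of $\cD$ under restriction: if $Y\subseteq Y'$ are subspaces of $X^{[d]}$ and $\cW$ is a finite open cover of $Y'$, then $\cD(\cW|_Y)\le\cD(\cW)$, since any finite open cover $\cV\succ\cW$ of $Y'$ restricts to a finite open cover $\cV|_Y\succ\cW|_Y$ of $Y$ with $\ord(\cV|_Y)\le\ord(\cV)$, and the defining infimum for $\cD(\cW|_Y)$ is over all such covers of $Y$. Applying this with $Y=\Map(\rho,F,\delta,\sigma)$, $Y'=\Map(\rho',F,\delta',\sigma)$ and $\cW=\cU^d|_{Y'}$ (noting $\cU^d|_Y$ is the further restriction of $\cW$ to $Y$) gives $\cD(\cU,\rho,F,\delta,\sigma)\le\cD(\cU,\rho',F,\delta',\sigma)$ for all $\sigma$. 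Dividing by $d_i$ and taking $\varlimsup_{i\to\infty}$ yields $\cD_\Sigma(\cU,\rho,F,\delta)\le\cD_\Sigma(\cU,\rho',F,\delta')$; this is consistent with the $-\infty$ convention, because emptiness of $\Map(\rho',F,\delta',\sigma_i)$ for large $i$ forces emptiness of the smaller set $\Map(\rho,F,\delta,\sigma_i)$ for large $i$. Since for every $\delta'>0$ such a $\delta>0$ exists, taking $\inf$ over $\delta$ on the left and then $\inf$ over $\delta'$ gives $\cD_\Sigma(\cU,\rho,F)\le\cD_\Sigma(\cU,\rho',F)$, and interchanging $\rho$ and $\rho'$ completes the proof.

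The main obstacle is the point already flagged: the map spaces are built from the $L^2$-average pseudometric $\rho_2$ rather than from $\rho_\infty$, so the inclusion of map spaces cannot be obtained by a naive termwise application of uniform continuity; the truncation estimate exploiting the boundedness of the metrics is precisely what repairs this. The remaining ingredients — monotonicity of $\Map$ and of $\cD$ under passing to subspaces, and pushing the inequalities through $\varlimsup$ and the two infima — are routine.
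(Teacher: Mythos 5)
Your proposal is correct and follows essentially the same route as the paper: the inclusion $\Map(\rho,F,\delta,\sigma)\subseteq\Map(\rho',F,\delta',\sigma)$ is obtained there too via a Chebyshev-type bound on the density of bad coordinates combined with uniform continuity and boundedness of the metrics, followed by the monotonicity of $\cD$ under restriction and passage through $\varlimsup$ and the infima. The only differences are cosmetic bookkeeping of the parameters ($\delta_0,\varepsilon,M$ versus $\sqrt{\delta'}$ and $\diam(X,\rho')$) and your explicit verification of steps the paper treats as immediate.
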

\begin{proof} By symmetry it suffices to show $\cD_\Sigma(\cU, \rho ,F )\le \cD_\Sigma(\cU, \rho' ,F )$. Let $\delta>0$.
Take $\delta'>0$ be a small positive number which we shall determine in a moment. We claim that for any map $\sigma$ from $G$ to $\Sym(d)$ for some
$d\in \Nb$
one has $\Map(\rho, F, \delta', \sigma)\subseteq \Map(\rho', F, \delta, \sigma)$.
Let $\varphi\in \Map(\rho, F, \delta', \sigma)$. For each $s\in F$, set
\[ \cW_s=\{a\in [d]: \rho(s\varphi(a), \varphi(sa))\le \sqrt{\delta'}\}.\]
By Lemma~\ref{L-almost} one has $|\cW_s|\ge (1-\delta')d$. Taking $\delta'$ small enough, we may assume that
for any $x, y\in X$ with $\rho(x, y)\le \sqrt{\delta'}$, one has $\rho'(x, y)\le \delta/2$. Then one has
\begin{align*}
(\rho'_2(\alpha_s\circ \varphi, \varphi\circ \sigma_s))^2&\le \frac{|\cW_s|}{d}\cdot \frac{\delta^2}{4}+(1-\frac{|\cW_s|}{d})(\diam(X, \rho'))^2\\
&\le \frac{\delta^2}{4}+\delta'(\diam(X, \rho'))^2\le \delta^2,
\end{align*}
granted that $\delta'$ is small enough. Therefore $\varphi\in \Map(\rho', F, \delta, \sigma)$. This proves the claim.

Since $\Map(\rho, F, \delta', \sigma)\subseteq \Map(\rho', F, \delta, \sigma)$, clearly $\cD(\cU, \rho, F, \delta', \sigma)\le \cD(\cU, \rho', F, \delta, \sigma)$. Thus $\cD(\cU, \rho, F)\le \cD(\cU, \rho, F, \delta')\le \cD(\cU, \rho', F, \delta)$. Letting $\delta\to 0$, we get $\cD(\cU, \rho, F)\le \cD(\cU, \rho', F)$ as desired.
\end{proof}

We need the following lemma several times.

\begin{lemma} \label{L-factor1}
Let $\alpha^X$ and $\alpha^Y$ be  continuous actions of $G$ on compact metrizable spaces $X$ and $Y$ respectively, and $\pi: X\rightarrow Y$ be an equivariant continuous map. Let $\rho^X$ and $\rho^Y$ be compatible metrics on $X$ and $Y$ respectively. Let $F$ be a nonempty finite subset of $G$ and $\delta>0$. Then there exists $\delta'>0$ such that for every map $\sigma$ from $G$ to $\Sym(d)$ for some $d\in \Nb$ and every
$\varphi\in \Map(\rho^X, F, \delta', \sigma)$, one has $\pi\circ \varphi\in \Map(\rho^Y, F, \delta, \sigma)$.
\end{lemma}
\begin{proof} Since $\pi$ is continuous and $X$ is compact, we can find $\delta'>0$ small enough such that $|F|\delta'(\diam(Y, \rho^Y))^2\le \delta^2/2$ and for any $x, x'\in X$ with $\rho^X(x, x')\le \sqrt{\delta'}$ one has $\rho^Y(\pi(x), \pi(x'))\le \delta/2$.

Denote by $\cW$ the set of all $a\in [d]$ satisfying $\rho^X(s\varphi(a),\varphi(sa))\le \sqrt{\delta'}$. By Lemma~\ref{L-almost} one has
$|\cW|\ge (1-|F|\delta')d$. For each $a\in \cW$, by the choice of $\delta'$ one has
$$\rho^Y(s\pi(\varphi(a)), \pi(\varphi(sa))=\rho^Y(\pi(s\varphi(a)), \pi(\varphi(sa))\le \delta/2.$$
Thus
\begin{align*}
(\rho^Y_2(\alpha^Y_s\circ \pi\circ \varphi, \pi\circ \varphi\circ \sigma_s))^2&\le \frac{|\cW|}{d}(\frac{\delta}{2})^2+\frac{|[d]\setminus \cW|}{d}(\diam(Y, \rho^Y))^2\\
&\le \delta^2/4+|F|\delta'(\diam(Y, \rho^Y))^2\le \delta^2/4+\delta^2/2<\delta^2,
\end{align*}
and hence $\pi\circ \varphi\in \Map(\rho^Y, F, \delta, \sigma)$.
\end{proof}

Lindenstrauss and Weiss established the next two propositions in the case $G$ is amenable \cite[page 5, Proposition 2.8]{LW}.

\begin{proposition} \label{P-subspace top}
Let $G$ act continuously on a compact metrizable space $X$. Let $Y$ be a closed $G$-invariant subset of $X$. Then $\mdim_\Sigma(Y)\le \mdim_\Sigma(X)$.
\end{proposition}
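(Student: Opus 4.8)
The plan is to reduce, for each finite open cover $\cV$ of $Y$, the quantity $\cD_\Sigma(\cV)$ to $\cD_\Sigma(\cU)$ for a suitably chosen finite open cover $\cU$ of $X$, and then to invoke the definition of $\mdim_\Sigma(X)$ together with the metric-independence from Lemma~\ref{L-sofic top mean dim independent of metric}. By that lemma we may compute $\cD_\Sigma$ on $Y$ using the restricted metric $\rho|_Y$ and on $X$ using $\rho$. The first observation is that, since $Y$ is $G$-invariant and $\rho|_Y$ agrees with $\rho$ on $Y\times Y$, for every nonempty finite $F\subseteq G$, every $\delta>0$, and every $\sigma\colon G\to\Sym(d)$ one has
\[ \Map(\rho|_Y, F, \delta, \sigma) = \Map(\rho, F, \delta, \sigma)\cap Y^{[d]}, \]
which is a closed, hence compact, subset of $\Map(\rho, F, \delta, \sigma)$.

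Next I would construct $\cU$. For each $V\in\cV$ choose an open subset $W_V$ of $X$ with $W_V\cap Y = V$ (possible since $V$ is open in $Y$), and set $\cU = \{W_V : V\in\cV\}\cup\{X\setminus Y\}$; this is a finite open cover of $X$ because $Y$ is closed. The point is to understand how $\cU^d$ restricts to $Y^{[d]}$: a basic member $U_1\times\cdots\times U_d$ of $\cU^d$ that uses the set $X\setminus Y$ in at least one coordinate meets $Y^{[d]}$ in the empty set, whereas $W_{V_1}\times\cdots\times W_{V_d}$ meets $Y^{[d]}$ in $V_1\times\cdots\times V_d$. Hence, after discarding empty members (which affect neither $\ord$ nor $\cD$), $\cU^d|_{Y^{[d]}} = \cV^d$, and therefore $\cU^d|_{\Map(\rho|_Y, F, \delta, \sigma)} = \cV^d|_{\Map(\rho|_Y, F, \delta, \sigma)}$.

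The remaining ingredient is the elementary monotonicity of $\cD$ under passing to a subspace: if $A\subseteq B$ with $B$ compact and $\cW$ is a finite open cover of $B$, then for any finite open cover $\cW'$ of $B$ refining $\cW$ the cover $\cW'|_A$ refines $\cW|_A$ and $\ord(\cW'|_A)\le\ord(\cW')$, so $\cD(\cW|_A)\le\cD(\cW)$. Applying this with $B = \Map(\rho, F, \delta, \sigma)$, $A = \Map(\rho|_Y, F, \delta, \sigma)$, and $\cW = \cU^d|_B$, together with the identity of the previous paragraph, gives
\[ \cD(\cV, \rho|_Y, F, \delta, \sigma) = \cD(\cU^d|_A) \le \cD(\cU^d|_B) = \cD(\cU, \rho, F, \delta, \sigma). \]
Dividing by $d_i$, taking $\varlimsup_{i\to\infty}$, then $\inf_{\delta>0}$, then $\inf_F$ yields $\cD_\Sigma(\cV)\le\cD_\Sigma(\cU)\le\mdim_\Sigma(X)$; the cases in which one of the map spaces is eventually empty only force the left-hand sides to be $-\infty$ and cause no difficulty. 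Taking the supremum over all finite open covers $\cV$ of $Y$ then gives $\mdim_\Sigma(Y)\le\mdim_\Sigma(X)$.

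I do not expect a serious obstacle here; the argument runs parallel to the amenable case \cite[Proposition 2.8]{LW} and is essentially bookkeeping on the map spaces. The two points that must be handled with care are the choice of $\cU$ — in particular, including $X\setminus Y$ so that $\cU$ genuinely covers $X$ while still restricting on $Y^{[d]}$ to exactly $\cV^d$ — and the monotonicity $\cD(\cW|_A)\le\cD(\cW)$, which is what lets $\cD(\cU, \rho, F, \delta, \sigma)$ dominate $\cD(\cV, \rho|_Y, F, \delta, \sigma)$ coordinate by coordinate at every stage of the definition.
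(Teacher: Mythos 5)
Your proof is correct and follows essentially the same route as the paper: restrict the metric to $Y$, extend the cover of $Y$ to a cover of $X$, use $\Map(\rho|_Y,F,\delta,\sigma)\subseteq\Map(\rho,F,\delta,\sigma)$ together with the identification of the restricted product covers, and pass the resulting inequality $\cD(\cV,\rho|_Y,F,\delta,\sigma)\le\cD(\cU,\rho,F,\delta,\sigma)$ through the limits. Your explicit construction of the extension (adding $X\setminus Y$) and your verification that $\cD$ is monotone under restriction to a closed subset simply spell out steps the paper leaves implicit.
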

\begin{proof}
Let $\rho$ be a compatible metric on $X$. Then $\rho$ restricts to a compatible metric $\rho'$ on $Y$.

Let $\cU$ be a finite open cover of $Y$. Then we can find a finite open cover $\cV$ of $X$ such that $\cU$ is the restriction of $\cV$ to $Y$.
Note that $\Map(\rho', F, \delta, \sigma)\subseteq \Map(\rho, F, \delta, \sigma)$ for any nonempty finite subset $F$ of $G$, any $\delta>0$, and any
map $\sigma$ from $G$ to $\Sym(d)$ for some $d\in \Nb$. Furthermore, the restriction of $\cV^d|_{\Map(\rho, F, \delta, \sigma)}$ on $\Map(\rho', F, \delta, \sigma)$ is exactly $\cU^d|_{\Map(\rho', F, \delta, \sigma)}$. Thus $\cD(\cU, \rho', F, \delta, \sigma)\le \cD(\cV, \rho, F, \delta, \sigma)$. It follows that
$\mdim_\Sigma(\cU)\le \mdim_\Sigma(\cV)\le \mdim_\Sigma(X)$. Since $\cU$ is an arbitrary finite open cover of $Y$, we get $\mdim_\Sigma(Y)\le \mdim_\Sigma(X)$.
\end{proof}

\begin{proposition} \label{P-product}
Let $G$ act continuously on a compact metrizable space $X_n$ for each $1\le n< R$, where $R\in  \Nb\cup \{\infty\}$. Consider the product action of $G$ on $X:=\prod_{1\le n<R}X_n$.
Then $\mdim_\Sigma(X)\le \sum_{1\le n<R}\mdim_\Sigma(X_n)$.
\end{proposition}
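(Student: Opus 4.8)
The plan is to follow the amenable argument of Lindenstrauss and Weiss \cite[page 5, Proposition 2.8]{LW} together with its underlying dimension theory, the only real novelty being the bookkeeping forced by the two extra operations $\varlimsup_{i\to\infty}$ and $\inf_{\delta>0}$ in Definition~\ref{D-sofic top mean dim}. The entire argument will be reduced to finite open covers of $X$ that depend on only finitely many coordinates; this keeps all sums finite and makes the $R=\infty$ case no harder than the finite case. Throughout I may assume that each $X_n\neq\emptyset$ and that the relevant map spaces are nonempty for all large $i$: the degenerate cases are immediate, since if some $\Map_{X_n}(\rho_n,F,\delta,\sigma_i)$ is empty for all large $i$ then so is $\Map(\rho,F,\delta',\sigma_i)$ for suitable $\delta'$, forcing $\mdim_\Sigma(X)=-\infty$.

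First I would fix a compatible metric $\rho_n$ on $X_n$ with $\diam(X_n,\rho_n)\le 1$ for each $n$, and set $\rho=\sum_{1\le n<R}2^{-n}\rho_n$, a compatible metric on $X$. Since $\rho$ dominates $2^{-n}\rho_n$ on the $n$-th coordinate, any $\varphi\in\Map(\rho,F,\delta,\sigma)$ has its $n$-th coordinate $\varphi_n$ lying in $\Map_{X_n}(\rho_n,F,2^n\delta,\sigma)$; hence for every $N\in\Nb$ with $N<R$,
$$\Map(\rho,F,\delta,\sigma)\ \subseteq\ \Big(\prod_{1\le n\le N}\Map_{X_n}(\rho_n,F,2^n\delta,\sigma)\Big)\times\prod_{N<n<R}X_n^{[d]} .$$

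Next comes the reduction to product covers. By a Lebesgue-number argument, every finite open cover $\cW$ of $X$ is refined by one of the form $\cU=\{\,V_1\times\cdots\times V_N\times\prod_{N<n<R}X_n : V_j\in\cV_j\,\}$, where $N<R$ and each $\cV_j$ is a finite open cover of $X_j$; and $\cD_\Sigma$ is monotone under refinement (if $\cU$ refines $\cW$ then the restriction of $\cU^d$ to any map space refines that of $\cW^d$, $\cD$ increases under refinement, and this is preserved by the $\varlimsup$ and the infima). So it suffices to bound $\cD_\Sigma(\cU)$ for $\cU$ of this special form. For such $\cU$ one has, after reordering coordinates, $\cU^d=\cV_1^d\times\cdots\times\cV_N^d\times\{\prod_{N<n<R}X_n^{[d]}\}$ as covers of $X^{[d]}=\prod_{1\le n<R}X_n^{[d]}$. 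Restricting this cover first to the larger set on the right-hand side of the displayed inclusion, discarding the trivial last factor (crossing with a one-element cover does not increase $\cD$), and applying $N-1$ times the classical inequality $\cD(\cA\times\cB)\le\cD(\cA)+\cD(\cB)$ for finite open covers of compact metrizable spaces (the same dimension-theoretic input used in \cite{LW}), I obtain
$$\cD(\cU,\rho,F,\delta,\sigma_i)\ \le\ \sum_{n=1}^{N}\cD(\cV_n,\rho_n,F,2^n\delta,\sigma_i).$$

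Finally I would push this through the three operations defining $\cD_\Sigma$. Dividing by $d_i$ and using subadditivity of $\varlimsup$ over a finite sum gives $\cD_\Sigma(\cU,\rho,F,\delta)\le\sum_{n=1}^{N}\cD_\Sigma(\cV_n,\rho_n,F,2^n\delta)$. Each $\cD_\Sigma(\cV_n,\rho_n,F,\cdot)$ is non-decreasing in its last argument (a smaller $\delta$ yields a smaller map space), hence so is the finite sum, and its infimum over $\delta>0$ equals the sum of the infima, namely $\sum_{n=1}^{N}\cD_\Sigma(\cV_n,\rho_n,F)$; thus $\cD_\Sigma(\cU,\rho,F)\le\sum_{n=1}^{N}\cD_\Sigma(\cV_n,\rho_n,F)$. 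By Lemma~\ref{L-sofic top mean dim independent of metric} the terms here equal $\cD_\Sigma(\cV_n,F)$, each of which is non-increasing as $F$ grows, so taking $\inf_F$ over nonempty finite $F\subseteq G$ and again interchanging with the finite sum yields $\cD_\Sigma(\cU)\le\sum_{n=1}^{N}\cD_\Sigma(\cV_n)\le\sum_{n=1}^{N}\mdim_\Sigma(X_n)\le\sum_{1\le n<R}\mdim_\Sigma(X_n)$, the last inequality using $\mdim_\Sigma(X_n)\ge 0$. Taking the supremum over all finite open covers $\cW$ of $X$ then gives $\mdim_\Sigma(X)\le\sum_{1\le n<R}\mdim_\Sigma(X_n)$. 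The one genuinely new point---and the thing most likely to go wrong if done carelessly---is the interchange of $\inf_\delta$ and $\inf_F$ with the sum over the factors; this is exactly why one first reduces to covers depending on finitely many coordinates, so that the sums are finite and monotonicity suffices, rather than arguing more directly. The dimension-theoretic core is unchanged from the amenable case.
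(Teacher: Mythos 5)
Your proof is correct and takes essentially the same route as the paper: reduce an arbitrary finite open cover to a refining cover of the form $\bigvee_{n\le N}\pi_n^{-1}(\cV_n)$, observe that coordinate projections carry $\Map(\rho,F,\delta,\sigma)$ into the factor map spaces $\Map(\rho_n,F,\cdot,\sigma)$, bound $\cD$ of the restricted product cover by the sum of the factor quantities via the subadditivity of $\cD$, and pass through the limsup and the infima over $\delta$ and $F$ using monotonicity. Your explicit metric $\rho=\sum_n 2^{-n}\rho_n$ and the product-cover inequality $\cD(\cA\times\cB)\le\cD(\cA)+\cD(\cB)$ are only cosmetic variants of the paper's arbitrary metrics with a uniform-continuity choice of $\delta'$ and the join inequality $\cD(\cU_1\vee\cU_2)\le\cD(\cU_1)+\cD(\cU_2)$ applied to pullbacks along the maps $\varphi\mapsto\pi_n\circ\varphi$, so no genuinely different idea is involved.
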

\begin{proof} Let $\rho$ and $\rho^{(n)}$ be compatible metrics on $X$ and $X_n$ respectively.
Denote by $\pi_n$ the projection of $X$ onto $X_n$. Let $\cU$ be a finite open cover of $X$. Then there are an $N\in \Nb$ with $N<R$ and a finite open cover
$\cV_n$ of $X_n$ for all $1\le n\le N$ such that
$$\cV:=\bigvee^N_{n=1}\pi_n^{-1}(\cV_n)\succ \cU.$$

Let $F$ be a nonempty finite subset of $G$ and $\delta>0$.
By Lemma~\ref{L-factor1} we can find $\delta'>0$ such that for any map $\sigma$ from $G$ to $\Sym(d)$ for some $d\in \Nb$ and any $\varphi\in \Map(\rho, F, \delta', \sigma)$
one has $\pi_n\circ \varphi\in \Map(\rho^{(n)}, F, \delta, \sigma)$ for all $1\le n\le N$. It follows that we have a continuous map
$\Phi_n: \Map(\rho, F, \delta', \sigma)\rightarrow \Map(\rho^{(n)}, F, \delta, \sigma)$ sending $\varphi$ to $\pi_n\circ \varphi$ for each $1\le n\le N$.
Note that
$$\cV^d|_{\Map(\rho, F, \delta', \sigma)}=\bigvee_{n=1}^N\Phi_n^{-1}(\cV_n^d|_{\Map(\rho^{(n)}, F, \delta, \sigma)}).$$
For any finite open covers $\cU_1$ and $\cU_2$ of a compact metrizable space $Y$ one has $\cD(\cU_1\vee \cU_2)\le \cD(\cU_1)+\cD(\cU_2)$ \cite[Corollary 2.5]{LW}.
Thus
\begin{align*}
\cD(\cU, \rho, F, \delta', \sigma)\le \cD(\cV, \rho, F, \delta', \sigma)\le \sum_{n=1}^N \cD(\cV_n, \rho^{(n)}, F, \delta, \sigma),
\end{align*}
and hence $\cD_\Sigma(\cU, \rho)\le \cD_\Sigma(\cU, \rho, F, \delta')\le \sum_{n=1}^N\cD_\Sigma(\cV_n, \rho^{(n)}, F, \delta)$.
Since $F$ and $\delta$ are arbitrary, we get
$$\cD_\Sigma(\cU, \rho)\le \sum_{n=1}^N\cD_\Sigma(\cV_n, \rho^{(n)})\le \sum_{n=1}^N\mdim_\Sigma(X_n)\le \sum_{1\le n <R}\mdim_\Sigma(X_n).$$
Therefore $\mdim_\Sigma(X)\le  \sum_{1\le n<R}\mdim_\Sigma(X_n)$ as desired.
\end{proof}

If a property P for continuous $G$-actions on compact Hausdorff spaces is preserved by products, subsystems, and isomorphisms, and the trivial action of $G$ on the one-point set $\bullet$ has property P, then for any continuous
$G$-action on a compact Hausdorff space $X$, there is a largest factor $Y$ of $X$ with property $P$ \cite[Proposition 2.9.1]{GM}.
We prove a similar fact for the category of actions on compact metrizable spaces, the proof of which is implicit in the proof of \cite[Proposition 6.12]{Lindenstrauss}.

\begin{lemma} \label{L-existence of largest factor}
Let $\Gamma$ be a topological group. Let P be a property for continuous $\Gamma$-actions on compact metrizable spaces. Suppose that P is preserved by countable products, subsystems, and isomorphisms, and that the trivial action of $\Gamma$ on the one-point set $\bullet$ has property P.
Then any continuous $\Gamma$-action on a compact metrizable space $X$ has a largest factor $Y$ with property P,
i.e. for any factor $Z$ of $X$ with property P
there is a unique ($\Gamma$-equivariant continuous surjective) map $Y\rightarrow Z$ making the following diagram
\begin{eqnarray*}
\xymatrix{
X \ar[rd] \ar[r]
&Y \ar[d]\\
&Z}
\end{eqnarray*}
commute.
\end{lemma}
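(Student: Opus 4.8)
The plan is to mimic the classical construction of the largest factor with a given property, adapting it from compact Hausdorff spaces to compact metrizable spaces by controlling cardinalities. First I would fix a compatible metric on $X$ and consider the collection of all factors of $X$ with property P, but rather than quotient by \emph{all} such factors (which would a priori involve a proper class of quotient maps), I would observe that every factor of $X$ is, up to isomorphism, a quotient of $X$ by a closed $\Gamma$-invariant equivalence relation $\cR\subseteq X\times X$, and since $X\times X$ is compact metrizable, the collection of closed subsets of $X\times X$ is itself a separable metric space (under the Hausdorff metric), hence has at most continuum-many elements. Thus the family $\{Z_j\}_{j\in J}$ of factors of $X$ with property P, taken up to isomorphism, may be indexed by a set, and in fact by a \emph{countable} set after a further reduction: the point is that any factor map $X\to Z$ factors through the diagonal map $X\to\prod_j Z_j$, so it suffices to produce, from the (possibly uncountable) family, a countable subfamily whose product already dominates all of them.

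Next I would carry out that reduction. Let $Y_0=\overline{\{(\pi_j(x))_j : x\in X\}}\subseteq\prod_{j\in J}Z_j$ with the product $\Gamma$-action, where $\pi_j:X\to Z_j$ are the factor maps; this is a subsystem of a product, but the product is over the possibly uncountable index set $J$, so P need not be inherited directly. To fix this, I would use separability of $X$: the diagonal map $\iota:X\to\prod_{j\in J}Z_j$ is continuous and its image is separable, so only countably many coordinates are needed to separate points of $\iota(X)$ — more precisely, there is a countable $J_0\subseteq J$ such that the projection $\prod_{j\in J}Z_j\to\prod_{j\in J_0}Z_j$ restricted to $\overline{\iota(X)}$ is injective, hence a homeomorphism onto its image. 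Then $Y:=\overline{\{(\pi_j(x))_{j\in J_0}: x\in X\}}$ is a subsystem of the \emph{countable} product $\prod_{j\in J_0}Z_j$, which has property P by the product hypothesis, so $Y$ has property P by the subsystem hypothesis. The map $X\to Y$ is a factor map, and for each $j\in J$ the coordinate projection $Y\to Z_j$ is a $\Gamma$-equivariant continuous surjection making the triangle commute; for $j\in J_0$ this is immediate, and for general $j\in J$ one uses that $Z_j$, being a factor of $X$, is determined by where the points of $X$ go, together with the fact that $Y$ already records enough coordinates.

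The main obstacle, and the step requiring the most care, is the separability/cardinality argument showing that a \emph{countable} subfamily suffices — i.e. finding $J_0$. The subtlety is that an arbitrary factor $Z$ of $X$ with property P must be shown to factor through $Y$, not merely through the product over $J_0$; this needs the observation that if $Z,Z'$ are two factors of $X$ and the diagonal map $X\to Z\times Z'$ has image whose two coordinate projections agree on points of $X$ mapped to the same point, then one factors through the other, so the preordered set of factors of $X$ is directed and the countably many chosen ones can be arranged to be cofinal. Uniqueness of the map $Y\to Z$ is then automatic from surjectivity of $X\to Y$: any two such maps agree on the dense (indeed full) image of $X$. I would also note at the outset that this entire argument is, as the statement says, implicit in the proof of \cite[Proposition 6.12]{Lindenstrauss}, so I would keep the exposition brief and refer there for the details of the directedness argument.
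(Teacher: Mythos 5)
Your overall construction is the same as the paper's: index the factors of $X$ with property P (up to isomorphism) by a set, extract a countable subfamily that already separates points as much as the whole family does, and take $Y$ to be the image of $X$ in the countable product, which has P because P is preserved by countable products and subsystems; uniqueness of $Y\to Z$ via surjectivity of $X\to Y$ is fine. The genuine gap is in the key reduction step. You justify the existence of the countable set $J_0$ by saying that the image $\iota(X)\subseteq\prod_{j\in J}Z_j$ is \emph{separable}, ``so only countably many coordinates are needed to separate points.'' Separability is not enough: $[0,1]^{[0,1]}$ is a separable compact subset of the product $\prod_{t\in[0,1]}[0,1]$ (Hewitt--Marczewski--Pondiczery), yet no countable set of coordinates is injective on it, since two functions may agree on any prescribed countable set of coordinates and differ elsewhere. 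What is true, and what your argument actually needs, is that $\iota(X)$ is compact and \emph{metrizable} (a continuous Hausdorff image of a compact metrizable space is metrizable), and that for a compact metrizable subset $K$ of a product of compact Hausdorff spaces some countable subprojection is injective on $K$ --- for instance because $C(K)$ is separable and the algebra of functions depending on finitely many coordinates is dense in $C(K)$ by Stone--Weierstrass, so countably many coordinates already give a point-separating family. Some such argument must be supplied; as written, the step as justified would fail.

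The paper sidesteps this entirely by working in $X^2$: for each factor $Z$ with P it sets $R_Z=\{(x,y)\in X^2:\pi_Z(x)=\pi_Z(y)\}$, a closed subset of the second countable space $X^2$, so there are countably many factors $Z_n$ with $\bigcap_n R_{Z_n}=\bigcap_Z R_Z=:R$; this is exactly the injectivity statement you want (injectivity of $p_{J_0}$ on $\iota(X)$ is the same as $\bigcap_{j\in J_0}R_{Z_j}=\bigcap_{j\in J}R_{Z_j}$). With that in hand, your final step should also be phrased more precisely than the ``directedness/cofinality'' remark: for an arbitrary factor $Z$ with P one has $R_Y=R\subseteq R_Z$, i.e.\ the fibres of $X\to Y$ refine those of $X\to Z$, and since $X\to Y$ is a closed, hence quotient, surjection of compact Hausdorff spaces, this induces a well-defined continuous $\Gamma$-equivariant surjection $Y\to Z$; no directedness of the family of factors is needed, and uniqueness follows from surjectivity of $X\to Y$ as you say.
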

\begin{proof} For each factor $Z$ of $X$ with factor map $\pi_Z: X\rightarrow Z$, denote by $R_Z$ the closed subset $\{(x, y)\in X^2: \pi_Z(x)=\pi_Z(y)\}$
of $X^2$.  Denote by $R$ the set $\bigcap_Z R_Z$ for $Z$ ranging over factors of $X$ with property P.
Since $X^2$ is compact metrizable, it has a countable base. Thus every subset $W$ of $X^2$ with the topology inherited from $X^2$ is a Lindel\"{o}f space in the sense that every open cover of $W$ has a countable subcover \cite[page 49]{Kelley}. Taking $W=X^2\setminus R$ and considering the open cover of $X^2\setminus R$ consisting of  $X^2\setminus R_Z$ for all factors $Z$ of $X$ with property P,
we find factors $Z_1, Z_2, \dots$ of $X$ with property P such that $\bigcap_{n=1}^\infty R_{Z_n}=R$. Consider the map
$\pi: X\rightarrow \prod_{n=1}^\infty Z_n$ sending $x$ to $(\pi_{Z_n}(x))_{n=1}^\infty$. Then $Y:=\pi(X)$ is a closed $\Gamma$-invariant subset of $\prod_{n=1}^\infty Z_n$ and is a factor of $X$. Furthermore, $R_Y=R$. By the assumption on P and $Z_n$, we see that the $\Gamma$-action on $Y$ has property P.  Since $R_Z\supseteq R=R_Y$ for every factor $Z$ of $X$ with property P, clearly $Y$ is the largest factor of $X$ with property P.
\end{proof}

By Remark~\ref{R-mean top dim2} and Lemma~\ref{L-factor1} if $G$ acts continuously on a compact metrizable space $X$ with $\mdim_\Sigma(X)\ge 0$, then $\mdim_\Sigma(Y)\ge 0$ for every factor $Y$ of $X$.
From Lemma~\ref{L-existence of largest factor} and Propositions~\ref{P-subspace top} and \ref{P-product}, taking property P to be having sofic mean topological dimension at most $0$ and observing that $\mdim_\Sigma(\bullet)=0$ for the trivial action of $G$ on the one-point set $\bullet$, we obtain the following result, which was established by Lindenstrauss for countable amenable groups \cite[Proposition 6.12]{Lindenstrauss}.

\begin{proposition}
Let $G$ act continuously on a compact metrizable space $X$ with $\mdim_\Sigma(X)\ge 0$. Then $X$ has a largest factor $Y$ satisfying $\mdim_\Sigma(Y)=0$.
\end{proposition}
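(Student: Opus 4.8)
The plan is to apply Lemma~\ref{L-existence of largest factor} with $\Gamma = G$ and with property P being ``$\mdim_\Sigma(\cdot)\le 0$'', so the only real content is to verify the three closure hypotheses of that lemma, and then to upgrade the conclusion ``$\mdim_\Sigma(Y)\le 0$'' to ``$\mdim_\Sigma(Y)=0$'' using the stated observation about factors. First I would check that P is preserved by isomorphisms: this is immediate, since $\mdim_\Sigma$ is manifestly an isomorphism invariant of the $G$-action (a topological conjugacy induces a homeomorphism of the relevant map spaces carrying one family of open covers to the other, so the quantities $\cD_\Sigma(\cU)$ are unchanged). Next, preservation under subsystems: if $Z'$ is a closed $G$-invariant subset of a compact metrizable $G$-space $Z$ with $\mdim_\Sigma(Z)\le 0$, then Proposition~\ref{P-subspace top} gives $\mdim_\Sigma(Z')\le \mdim_\Sigma(Z)\le 0$. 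Finally, preservation under countable products: if $\mdim_\Sigma(X_n)\le 0$ for all $n$ in a countable index set, then Proposition~\ref{P-product} gives $\mdim_\Sigma(\prod_n X_n)\le \sum_n \mdim_\Sigma(X_n)\le 0$. (Here one uses that each summand is $\le 0$; note $\mdim_\Sigma$ takes values in $[-\infty,+\infty]$, and if some $X_n$ has $\mdim_\Sigma(X_n)=-\infty$, which happens exactly when the relevant map spaces are eventually empty, the product also has empty map spaces, so the bound still holds. In the case of interest all spaces are nonempty $G$-factors of $X$ with $\mdim_\Sigma(X)\ge 0$, hence have $\mdim_\Sigma \in [0,\infty]$, and the sum of nonpositive-or-zero terms is $\le 0$.)

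With the three hypotheses verified, Lemma~\ref{L-existence of largest factor} furnishes a largest factor $Y$ of $X$ with property P, i.e. with $\mdim_\Sigma(Y)\le 0$, and such that every factor $Z$ of $X$ with $\mdim_\Sigma(Z)\le 0$ factors through $Y$. It remains to see that in fact $\mdim_\Sigma(Y)=0$ rather than merely $\le 0$. For this I would invoke the observation recorded just before the statement: since $\mdim_\Sigma(X)\ge 0$ and $Y$ is a factor of $X$, we have $\mdim_\Sigma(Y)\ge 0$; combined with $\mdim_\Sigma(Y)\le 0$ this yields $\mdim_\Sigma(Y)=0$. The monotonicity of $\mdim_\Sigma$ under passing to factors is itself a consequence of Proposition~\ref{P-subspace top} together with the isomorphism-invariance: a factor map $X\to Y$ can be realized (via the graph, or directly) so that $Y$ is a quotient, and one checks $\cD_\Sigma(\cU|_Y)$-type quantities pull back, giving $\mdim_\Sigma(Y)\le\mdim_\Sigma(X)$ — wait, that is the wrong direction; rather, a finite open cover of $Y$ pulls back to one of $X$ with at least as large an associated dimension, so $\mdim_\Sigma(Y)\le\mdim_\Sigma(X)$ is the easy inequality, while the relevant fact here is simply that a factor of a system with nonnegative mean dimension again has nonnegative mean dimension, which holds because $\mdim_\Sigma$ of any nonempty system is $\ge 0$ by definition (the covering-dimension quantities $\cD$ are always $\ge 0$ on nonempty compact spaces, and are $-\infty$ only when the map spaces are eventually empty, which cannot happen for a factor of a system whose own map spaces are eventually nonempty).

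The main obstacle — really the only point requiring care — is the bookkeeping around the value $-\infty$ and the nonemptiness of map spaces, to make sure the product estimate and the ``$\ge 0$'' lower bound are applied only to systems for which they are valid; once one restricts attention to factors of $X$, all of which inherit nonempty map spaces from $X$ (because $\mdim_\Sigma(X)\ge 0$ forces $\Map(\rho,F,\delta,\sigma_i)\ne\emptyset$ for infinitely many $i$, for each $F,\delta$, and a factor map pushes these forward to nonempty map spaces for the factor), this subtlety evaporates and the argument is a direct citation of Lemma~\ref{L-existence of largest factor} and Propositions~\ref{P-subspace top} and \ref{P-product}.
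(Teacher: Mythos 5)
Your proof is correct and follows essentially the same route as the paper: apply Lemma~\ref{L-existence of largest factor} with property P being ``sofic mean topological dimension at most $0$'', verify the hypotheses via Propositions~\ref{P-subspace top} and \ref{P-product} (plus trivial isomorphism invariance), and upgrade $\le 0$ to $=0$ via the observation that any factor of a system with $\mdim_\Sigma\ge 0$ inherits nonempty map spaces and hence has $\mdim_\Sigma\ge 0$. One stray remark only: your parenthetical claim that a finite open cover of $Y$ pulls back to one of $X$ with at least as large an associated quantity, yielding $\mdim_\Sigma(Y)\le\mdim_\Sigma(X)$, has the inequality backwards (Lemma~\ref{L-factor} gives $\cD_\Sigma(\pi^{-1}(\cU))\le\cD_\Sigma(\cU)$, and monotonicity of $\mdim_\Sigma$ under factor maps is not established in the paper), but you correctly discard this as irrelevant, so nothing in the argument depends on it.
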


\section{Sofic Mean Topological Dimension for Amenable Groups} \label{S-top amenable}

In this section we show that the sofic mean topological dimension extends the mean topological dimension for actions of countably infinite amenable groups:

\begin{theorem} \label{T-top mean dim}
Let  a  countably infinite (discrete) amenable group $G$ act continuously on a compact metrizable space $X$.
Let $\Sigma$ be a sofic approximation sequence of $G$.
Then
$$\mdim_\Sigma(X)=\mdim(X).$$
\end{theorem}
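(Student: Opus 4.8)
The plan is to prove the two inequalities $\mdim_\Sigma(X)\le\mdim(X)$ and $\mdim_\Sigma(X)\ge\mdim(X)$ separately, in both cases reducing the sofic quantity $\cD_\Sigma(\cU)$ associated to a finite open cover $\cU$ to the amenable quantity $\mdim(\cU)$, and then taking the supremum over $\cU$. Throughout I would fix a compatible metric $\rho$ on $X$ (allowed by Lemma~\ref{L-sofic top mean dim independent of metric}). The key mechanism connecting the two worlds is the standard one from sofic entropy theory \cite{KerLi10amenable}: when $G$ is amenable and $\sigma\colon G\to\Sym(d)$ is a good enough sofic approximation, one can partition (most of) $[d]$ into translates $\{\sigma_s(c): s\in F_j\}$, $j\in J$, of right F\o lner sets $F_j$, up to a small error set. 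This produces, for a given $\varphi\in X$, a "tiled" map $[d]\to X$ that is almost $G$-equivariant in the $\rho_2$ sense, and conversely lets one read off from an arbitrary $\varphi\in\Map(\rho,F,\delta,\sigma)$ an approximate orbit over each tile.

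For the inequality $\mdim_\Sigma(X)\le\mdim(X)$: fix a finite open cover $\cU$ of $X$, a finite $F\subseteq G$ and $\delta>0$. Using the Rokhlin-type tiling above, choose an $\varepsilon$-quasi-tiling of $G$ by right F\o lner sets $F_1,\dots,F_m$ that are sufficiently left invariant, and for a good enough $\sigma_i$ decompose $[d_i]$ (up to a set of density $<\varepsilon$) into disjoint tiles each of which, under $\sigma_i$, looks like a translate $\sigma_i(\cdot)(c)$ of some $F_j$. Given $\varphi\in\Map(\rho,F,\delta,\sigma_i)$, on each tile indexed by $c$ the tuple $(\varphi(\sigma_i(s)(c)))_{s\in F_j}$ is $\rho_2$-close to an honest partial orbit $(\alpha_s x_c)_{s\in F_j}$ of the point $x_c=\varphi(c)$, so the restriction of $\cU^{d_i}$ to $\Map(\rho,F,\delta,\sigma_i)$ is, tile by tile, essentially subordinate to $\cU^{F_j}$ composed with the coordinate map $\varphi\mapsto x_c$. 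Using $\cD(\cU_1\vee\cU_2)\le\cD(\cU_1)+\cD(\cU_2)$ \cite[Corollary 2.5]{LW} and the fact that $\cD$ does not increase under preimages, one bounds $\cD(\cU,\rho,F,\delta,\sigma_i)$ by $\sum_j (\text{number of }F_j\text{-tiles})\cdot\cD(\cU^{F_j}) + (\text{error})\cdot\log|\cU|$-type terms; the error set contributes a term that is $O(\varepsilon d_i)$ and the tiling gives $\sum_j(\#F_j\text{-tiles})|F_j|\le d_i$, so dividing by $d_i$, letting $i\to\infty$, then $\delta\to 0$ and $F$ shrink, and finally letting the tiles become more invariant and $\varepsilon\to 0$, yields $\cD_\Sigma(\cU)\le\mdim(\cU)+(\text{vanishing error})$, hence $\le\mdim(\cU)$.

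For the reverse inequality $\mdim_\Sigma(X)\ge\mdim(X)$: fix $\cU$. For each sufficiently left-invariant finite $E\subseteq G$ I would produce, for a good enough $\sigma_i$, many disjoint copies of the "$E$-block" inside $[d_i]$ as above, and build a continuous injection (up to the relevant covers) from $X^{(\text{number of }E\text{-blocks})}$, or really from the relevant product of $\cU^E$-type covers, into $\Map(\rho,F,\delta,\sigma_i)$: send a tuple $(x_c)_c$ to the map that equals $\alpha_s x_c$ at the point $\sigma_i(s)(c)$ and is defined arbitrarily (say constantly $x_{c_0}$) on the leftover set; this map lands in $\Map(\rho,F,\delta,\sigma_i)$ once $E\supseteq F$, the tiling error is small, and $\sigma_i$ is a good enough approximation, because the defect $\rho_2(\varphi\circ\sigma_s,\alpha_s\varphi)$ is then controlled by the density of points where the tiling fails plus the density of points where $\sigma_i$ fails to be multiplicative on $F$. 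Pulling $\cU^{d_i}|_{\Map}$ back along this map dominates $\bigvee_c \pi_c^{-1}(\cU^E)$, and since $\cD$ is monotone under such refinement and additive-subadditive in the right direction, $\cD(\cU,\rho,F,\delta,\sigma_i)\ge (\#E\text{-blocks})\cdot\cD(\cU^E)$. The number of disjoint $E$-blocks in $[d_i]$ can be taken to be at least $(1-\varepsilon)d_i/|E|$, so dividing by $d_i$ and passing to the limit gives $\cD_\Sigma(\cU,\rho,F,\delta)\ge (1-\varepsilon)\cD(\cU^E)/|E|$; then letting $\delta\to 0$, $\varepsilon\to 0$, and $E$ become more and more invariant, $\cD(\cU^E)/|E|\to\mdim(\cU)$ by the Ornstein--Weiss lemma, so $\cD_\Sigma(\cU)\ge\mdim(\cU)$.

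The main obstacle I anticipate is bookkeeping the \emph{error sets} correctly in the upper-bound direction: $\cD$ is a covering-dimension-type invariant, not an entropy, so one cannot simply throw away a small-density subset of coordinates and pay a $\log$ price — one must genuinely realize the restricted cover $\cU^{d_i}|_{\Map}$ as a join of (i) pullbacks of $\cU^{F_j}$ over tiles, whose $\cD$ is controlled, and (ii) the pullback of $\cU^{(\text{error set})}$, which in the worst case contributes its full covering dimension, i.e. up to $|\text{error set}|\cdot\dim X$. That is why one needs the error set to have density $\to 0$ \emph{before} multiplying by anything, and why the argument must be arranged so that the error contribution is $\varepsilon d_i\cdot(\text{const})$ with the constant independent of everything else; getting a clean such bound, and checking that the tiles can simultaneously be made as invariant as the Ornstein--Weiss convergence requires while keeping the error density small, is the delicate part. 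A secondary technical point is ensuring the maps built in the lower-bound direction are genuinely continuous on the product space and land inside $\Map(\rho,F,\delta,\sigma_i)$ uniformly; this is routine given the construction of $\Map$ but must be stated with care.
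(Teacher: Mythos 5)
Your lower-bound argument contains a genuine gap at the step ``$\cD(\cU,\rho,F,\delta,\sigma_i)\ge(\#E\text{-blocks})\cdot\cD(\cU^E)$.'' Pulling $\cU^{d_i}|_{\Map(\rho,F,\delta,\sigma_i)}$ back along your block map gives a cover refining $\bigvee_c\pi_c^{-1}(\cU^E)$ on a product of copies of $X$ (one per block), so what you need is the superadditivity $\cD\bigl(\bigvee_c\pi_c^{-1}(\cU^E)\bigr)\ge\sum_c\cD(\cU^E)$. But $\cD$ is only \emph{sub}additive under joins and products: covering dimension can drop under products (Boltyanskii's compactum with $\dim X^2<2\dim X$), and correspondingly $\cD(\cU\times\cU)$ can be strictly smaller than $2\cD(\cU)$. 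Indeed, if your step were valid it would apply verbatim when $G$ is finite (take $E=G$, one point per block), yielding $\mdim_\Sigma(X)\ge\mdim(X)$ for finite groups --- which contradicts Remark~\ref{R-sofic top not equal to top for finite} and Lemma~\ref{L-top mean dim upper bound finite}. Note also that your argument never uses that $G$ is infinite, whereas the theorem genuinely requires it. The paper's proof of the lower bound (Lemma~\ref{L-top mean dim lower bound}) avoids the product entirely: since $G$ is infinite one can choose injections $\psi_k:\cC_k\to G$ so that all the Rokhlin tiles in $[d]$ are transported to pairwise disjoint translates inside $G$, forming a single almost-invariant finite set $\tilde F\subseteq G$ with $|\tilde F|\ge(1-\delta')d$; one then maps a \emph{single} point $x\in X$ to $\varphi_x$ with $\varphi_x(sc)=s\psi_k(c)x$, so the pullback of $\cU^d|_{\Map}$ is exactly $\cU^{\tilde F}$ as a cover of $X$ itself, and $\cD(\cU^{\tilde F})/|\tilde F|\ge\mdim(\cU)-\theta$ by Ornstein--Weiss. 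Some device of this kind, converting the tiles into one nearly invariant subset of $G$ rather than independent factors, is indispensable.

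Your upper-bound sketch follows the paper's strategy in spirit (Rokhlin quasi-tiling plus subadditivity of $\cD$), and you correctly identify that one cannot pay a mere logarithmic price for error coordinates; but the difficulty you flag is not resolved by your outline, and it is more than bookkeeping. The set of coordinates where a given $\varphi\in\Map(\rho,F,\delta,\sigma)$ fails to be approximately equivariant depends on $\varphi$ (only its density is controlled), so it cannot be absorbed into a fixed leftover set $\cZ$ chosen with the tiling; ``tile by tile essentially subordinate to $\cU^{F_j}$'' is not yet a covering statement at those coordinates. The paper handles this with two ingredients your sketch lacks: (i) Lemma~\ref{L-compatible map}, realizing $\cD(\cV^{F_k})$ and $\cD(\cU)$ by compatible maps into compacta $Y_k$ and $Z$ of those dimensions (this also caps the leftover-set contribution by $|\cZ|\cD(\cU)$ rather than $|\cZ|\dim X$, essential when $\dim X=\infty$); and (ii) the auxiliary continuous map $h$ into the low-dimensional space $X_0$, which continuously records, per coordinate, whether equivariance fails beyond the threshold $\kappa$, so that the combined map $\Psi$ into $X_0\times\prod_k\prod_{c\in\cC_k}Y_k\times\prod_{a\in\cZ}Z$ is genuinely $\cU^d|_{\Map}$-compatible. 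Without (ii) the claimed bound on $\cD(\cU,\rho,F,\delta,\sigma)$ does not follow.
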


Theorem~\ref{T-top mean dim} follows directly from Lemmas~\ref{L-top mean dim lower bound} and \ref{L-top mean dim upper bound} below.

We need the following Rokhlin lemma several times. Though for ergodic measure-preserving actions of $\Zb$ one needs only one Rokhlin tower, for actions of general countable amenable groups one needs several Rokhlin towers. Here one should think of $[d]$ as equipped with the uniform distribution. The assumption about $\sigma$ says that it is approximately a free action of $G$ on $[d]$ (preserving the uniform distribution), while the conclusion says that $\cC_1, \dots, \cC_\ell$ are the bases for Rokhlin towers and that for each $k=1, \dots, \ell$, $\{\sigma(s)\cC_k\}_{s\in F_k}$ is a Rokhlin tower.

\begin{lemma}\cite[Lemma 4.6]{KerLi10amenable} \label{L-Rokhlin}
Let $G$ be a countable amenable group.
Let $0\le \tau<1$, $0<\eta<1$, $\delta>0$, and $K$ be a nonempty finite subset of $G$.
Then there are an $\ell\in \Nb$, nonempty finite subsets $F_1, \dots, F_\ell$ of $G$ with $|KF_k \setminus F_k|<\delta |F_k|$ and
$|F_kK\setminus F_k|<\delta|F_k|$ for
all $k=1, \dots, \ell$, a finite set $F\subseteq G$ containing $e_G$,
and an $\eta'>0$ such that,
for every $d\in \Nb$, every map $\sigma: G\rightarrow \Sym(d)$ for which there is a set $\cB\subseteq [d]$ satisfying
$|\cB|\ge (1-\eta')d$ and
\[
\sigma_{st}(a)=\sigma_s\sigma_t(a), \sigma_s(a)\neq \sigma_{s'}(a), \sigma_{e_G}(a)=a
\]
for all $a\in \cB$ and $s, t, s'\in F$ with $s\neq s'$, and every set $\cW\subseteq [d]$ with $|\cW|\ge (1-\tau)d$,
there exist $\cC_1, \dots, \cC_\ell\subseteq \cW$ such that
\begin{enumerate}
\item for every $k=1, \dots, \ell$, the map $(s, c)\mapsto \sigma_s(c)$ from $F_k\times \cC_k$ to $\sigma(F_k)\cC_k$ is bijective,

\item the sets $\sigma(F_1)\cC_1, \dots, \sigma(F_\ell)\cC_\ell$ are pairwise disjoint and $|\bigcup_{k=1}^\ell \sigma(F_k)\cC_k|\ge (1-\tau-\eta)d$.
\end{enumerate}
\end{lemma}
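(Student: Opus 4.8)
The plan is to derive this as a \emph{sofic} incarnation of the Ornstein--Weiss quasitiling theorem, transferring a tiling of the group to the vertex set $[d]$ through the local charts supplied by the good set $\cB$. First I would invoke the Ornstein--Weiss machinery \cite{OW} to produce the shapes: given $\eta$, $K$ and $\delta$, there exist finite sets $F_1, \dots, F_\ell \subseteq G$, each satisfying $|KF_k \setminus F_k| < \delta|F_k|$ and $|F_k K \setminus F_k| < \delta |F_k|$, together with a finite ``safety'' set $E \subseteq G$ and an invariance threshold, such that the following maximality-implies-covering property holds at the level of $G$: for any sufficiently invariant finite set $A$ and any family of pairwise disjoint translates $F_k c \subseteq A$ that is maximal under the greedy rule (place the largest admissible shape first), the union of the translates covers all but an $\eta$-fraction of the part of $A$ lying deep inside. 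I would then take $F$ to be a finite set containing $e_G$, all of the $F_k$, the set $E$, and enough products (such as $F_\ell F_\ell^{-1}$ and $F_\ell E$) so that whether a would-be tile fits can be decided purely from the local chart; the number $\eta'$ is chosen small at the very end.

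Given $\sigma$ with its good set $\cB$, $|\cB| \ge (1-\eta')d$, and $\cW$ with $|\cW| \ge (1-\tau)d$, I would next run the greedy procedure directly on $[d]$. Call $c \in [d]$ a \emph{genuine center} for $F_k$ if $c \in \cW$ and the map $s \mapsto \sigma_s(c)$ is injective on $F_k$ with the expected multiplicative behaviour, which is guaranteed whenever $c$ and its $F$-translates remain in $\cB$; for such $c$ the tile $\sigma(F_k)c$ has exactly $|F_k|$ elements. Greedily select pairwise disjoint genuine tiles, always using the largest available shape, until none can be added; this produces $\cC_1, \dots, \cC_\ell \subseteq \cW$. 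By construction the tiles across different $k$ are pairwise disjoint and, restricted to each $\cC_k$, the map $(s,c) \mapsto \sigma_s(c)$ is bijective onto $\sigma(F_k)\cC_k$, so condition (1) and the disjointness half of (2) hold automatically; only the covering estimate remains.

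For the covering estimate I would pass from the global packing $P := \bigcup_k \sigma(F_k)\cC_k$ back to the group by a Fubini average over local charts. For each sufficiently deep good point $a$, so that $s \mapsto \sigma_s(a)$ is an injective, multiplicative chart $\chi_a : E \to [d]$, pull $P$ back to $P_a := \{s \in E : \sigma_s(a) \in P\}$. Maximality of the greedy packing on $[d]$, together with the product conditions built into $F$, forces $P_a$ to be maximal in the Ornstein--Weiss sense inside the chart, so the covering property gives $|E \setminus P_a| \le \eta |E|$. Averaging over $a$ and interchanging the order of summation (each uncovered $b \in [d] \setminus P$ is seen as $\sigma_s(a)$ for roughly $|E|$ choices of $a$, namely $a = \sigma_{s^{-1}}(b)$) converts this into $|[d] \setminus P| \le (\eta + \tau)d$ up to error terms coming from the bad set $[d] \setminus \cB$ and from chart boundaries. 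Choosing $\eta'$ small enough absorbs those errors and yields $|P| \ge (1 - \tau - \eta)d$, which is condition (2).

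The main obstacle I expect is the bookkeeping in the chart transfer: I must ensure that the admissibility of a tile on $[d]$ (disjointness from $P$, and being a genuine tile) is faithfully reflected inside every chart, and conversely that a tile the Ornstein--Weiss argument would place in a chart corresponds to a genuine placeable tile on $[d]$. This is exactly where freeness, near-multiplicativity on $\cB$, and the inclusion of the relevant products in $F$ must be combined carefully; controlling the boundary points where charts overlap or where $\sigma$ fails to be multiplicative, and showing that these contribute only an $O(\eta')$ error, is the technically delicate step. Quantifying ``sufficiently deep'' so that the invariance threshold from \cite{OW} matches the set $F$ and the value $\eta'$ chosen here is the final reconciliation of parameters.
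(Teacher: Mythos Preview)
The paper does not supply its own proof of this lemma: it is quoted verbatim from \cite[Lemma~4.6]{KerLi10amenable} and used as a black box, so there is no in-paper argument to compare against. Your outline is in the spirit of the actual proof in \cite{KerLi10amenable}, which also rests on the Ornstein--Weiss quasitiling machinery transferred to $[d]$ via the good set $\cB$.

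That said, one step in your sketch deserves more scrutiny. You assert that maximality of the greedy packing $P$ on $[d]$ forces the pulled-back set $P_a$ to be maximal in the Ornstein--Weiss sense inside each chart $\chi_a$. This inference can fail as stated: a tile $F_k c'$ that the Ornstein--Weiss argument would place inside the chart image $\chi_a(E)$ need not correspond to an admissible tile on $[d]$, because the corresponding center $\sigma_{c'}(a)$ might lie outside $\cW$ (recall $\cW$ can miss a $\tau$-fraction of $[d]$, with $\tau$ independent of $\eta'$), or its $F$-orbit might exit $\cB$. Thus maximality on $[d]$ does not directly give maximality in every chart, and the $\tau$-loss cannot be absorbed into $\eta'$. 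The actual argument in \cite{KerLi10amenable} handles this by first fixing a single very invariant window $F'$, applying Ornstein--Weiss once inside $F'$ to get centers $C_1,\dots,C_\ell\subseteq F'$ with $\bigl|\bigcup_k F_k C_k\bigr|\ge(1-\eta)|F'|$, and then covering most of $\cW\cap\cB$ by disjoint $\sigma$-translates of $F'$ (a single-shape Rokhlin step). Each $\sigma(F')$-copy inherits the tiling from $F'$ verbatim, and the $\tau$-deficit of $\cW$ is accounted for separately rather than being fed back through the chart argument. Your Fubini idea can be made to work, but you would need to budget the $\tau$-proportion of bad centers explicitly rather than claiming chart-level maximality.
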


\begin{remark} \label{R-Rokhlin}
Let $G$ be a finite group. Note that the only nonempty finite subset $F$ of $G$ satisfying $|GF\setminus F|<\frac{1}{|G|}|F|$ is $G$. From Lemma~\ref{L-Rokhlin} one deduces the following: Let $0\le \tau<1$ and $0<\eta<1$, then there is an $\eta'>0$ such that, for every $d\in \Nb$, every map $\sigma: G\rightarrow \Sym(d)$ for which there is $\cB\subseteq [d]$ satisfying
$|\cB|\ge (1-\eta')d$ and
\[
\sigma_{st}(a)=\sigma_s\sigma_t(a), \sigma_s(a)\neq \sigma_{s'}(a), \sigma_{e_G}(a)=a
\]
for all $a\in \cB$ and $s, t, s'\in G$ with $s\neq s'$, and every set $\cW\subseteq [d]$ with $|\cW|\ge (1-\tau)d$,
there exists $\cC\subseteq \cW$ such that the map $(s, c)\mapsto \sigma_s(c)$ from $G\times \cC$ to $\sigma(G)\cC$ is bijective and $|\sigma(G)\cC|\ge (1-\tau-\eta)d$.
\end{remark}

Combined with Lemma~\ref{L-Rokhlin}, the following lemma tells us how to construct elements in $\Map(\rho, F, \delta, \sigma)$.

\begin{lemma} \label{L-construct map}
Let $\alpha$ be a continuous action of a countable group $G$ on a compact metrizable space $X$. Let $\rho$ be a continuous pseudometric on $X$. Let $\delta, \delta'>0$ with
$\sqrt{\delta'}\diam(X, \rho)<\delta/2$.
Let $\ell\in \Nb$ and $F, F_1, \dots, F_\ell$ be nonempty finite subsets of $G$ with $|FF_k\setminus F_k|<\delta'|F_k|$ for all $k=1, \dots, \ell$. Let $\sigma$ be a map $G\rightarrow \Sym(d)$ for some $d\in \Nb$. Denote by $\cW$ the set of elements $a$ in $[d]$ satisfying
$\sigma_t\sigma_s(a)=\sigma_{ts}(a)$ for all $t\in F$ and $s\in \bigcup_{k=1}^\ell F_k$.  Suppose that there are $\cC_1, \dots, \cC_\ell\subseteq \cW$ satisfying the following:
\begin{enumerate}
\item for every $k=1, \dots, \ell$, the map $(s, c)\mapsto \sigma_s(c)$ from $F_k\times \cC_k$ to $\sigma(F_k)\cC_k$ is bijective,

\item the sets $\sigma(F_1)\cC_1, \dots, \sigma(F_\ell)\cC_\ell $ are pairwise disjoint and $|\bigcup_{k=1}^\ell\sigma(F_k)\cC_k|\ge (1-\delta')d$.
\end{enumerate}
For any $h=(h_k)_{k=1}^\ell\in \prod_{k=1}^\ell X^{\cC_k}$, if $\varphi: [d]\rightarrow X$ satisfies
$$ \varphi(sc)=s(h_k(c))$$
for all $k\in \{1, \dots, \ell\}, c\in \cC_k$, and $s\in F_k$, then $\varphi\in \Map(\rho, F, \delta, \sigma)$.
\end{lemma}
\begin{proof}
Note that if $t\in F$, $k\in \{1, \dots, \ell\}$, $s\in F_k$, $c\in \cC_k$,  and $ts\in F_k$, then $\sigma_t\sigma_s(c)=\sigma_{ts}(c)$, and hence  $\alpha_t\circ\varphi(sc)=\varphi\circ \sigma_t(sc)$.
For every $t\in F$, one has
\begin{align*}
 (\rho_2(\alpha_t\circ \varphi, \varphi\circ \sigma_t))^2&\le \frac{d-|\bigcup_{k=1}^\ell\sigma(F_k\cap t^{-1}F_k)\cC_k|}{d}(\diam(X, \rho))^2\\
 &= \frac{d-|\bigcup_{k=1}^\ell\sigma(F_k)\cC_k|+|\bigcup_{k=1}^\ell\sigma(F_k\setminus t^{-1}F_k)\cC_k|}{d}(\diam(X, \rho))^2\\
 &\le \frac{\delta'd+\delta'|\bigcup_{k=1}^\ell\sigma(F_k)\cC_k|}{d}(\diam(X, \rho))^2\\
 &\le 2\delta'(\diam(X, \rho))^2<\delta^2.
\end{align*}
Thus $\varphi \in\Map (\rho , F ,\delta , \sigma )$.
\end{proof}

We show first $\mdim_\Sigma (X)\ge \mdim(X)$ for infinite $G$. This amounts to show that $\Map(\rho, F, \delta, \sigma)$ is large enough. When $[d]=H$ for some approximately left invariant finite subset $H$ of $G$ and $\sigma_s\in \Sym(d)$ is essentially the left multiplication by $s$ for all $s\in F$, one can embed $X$ into $\Map(\rho, F, \delta, \sigma)$ sending $x$ to $(sx)_{s\in H}$. In general, $[d]$ may fail to be of the form $H$, but Lemma~\ref{L-Rokhlin} tells us $[d]$ is roughly the disjoint union of some such $H_j$ for $j\in J$. Then we do such embedding on each $H_j$ and hence embed $X^J$ into $\Map(\rho, F, \delta, \sigma)$.

\begin{lemma} \label{L-top mean dim lower bound}
Let a countably infinite  amenable group $G$ act continuously on a compact metrizable space $X$.
Then for any finite open cover $\cU$ of $X$ we have $\cD_\Sigma(\cU)\ge \mdim(\cU)$. In particular,
$\mdim_\Sigma (X)\ge \mdim(X)$.
\end{lemma}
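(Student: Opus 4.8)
The plan is to exhibit, for a good enough sofic approximation $\sigma_i: G \to \Sym(d_i)$ and a suitable Følner set $F_0$ of $G$, enough of a copy of $X^{F_0}$ inside $\Map(\rho, F, \delta, \sigma_i)$ that the order-comparison in Definition~\ref{D-order} forces $\cD(\cU, \rho, F, \delta, \sigma_i)$ to be at least roughly $d_i/|F_0| \cdot \cD(\cU^{F_0})$, and then to optimize over $F_0$. The key mechanism is the Rokhlin lemma (Lemma~\ref{L-Rokhlin}): given a fixed finite open cover $\cU$, a target $\varepsilon > 0$, and the data $F, \delta$ from Definition~\ref{D-sofic top mean dim}, one first uses the Ornstein–Weiss convergence $\cD(\cU^{F_0})/|F_0| \to \mdim(\cU)$ to pick a single finite set $F_0 \subseteq G$ (sufficiently left invariant, containing $e_G$, and absorbing $F$ up to a small fraction) with $\cD(\cU^{F_0})/|F_0| \ge \mdim(\cU) - \varepsilon$. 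Then one feeds $F_0$ (or a finite family built from it) into Lemma~\ref{L-Rokhlin} to obtain, for all large $i$, tiles $\cC_1,\dots,\cC_\ell \subseteq [d_i]$ with $(s,c)\mapsto \sigma_s(c)$ bijective on each $F_k \times \cC_k$, pairwise disjoint $\sigma(F_k)\cC_k$, and union of density close to $1$.

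Next I would build the embedding. Pick any reference point $x_0 \in X$. For each $k$ and each tuple $(z_c)_{c \in \cC_k} \in X^{\cC_k}$, define a map $\varphi: [d_i] \to X$ by setting, on the tile over $c \in \cC_k$, $\varphi(\sigma_s(c)) = \alpha_s(z_c)$ for $s \in F_k$, and $\varphi(a) = x_0$ off $\bigcup_k \sigma(F_k)\cC_k$. Because each tile is a near-exact copy of the Følner set $F_k$, the approximate equivariance $\rho_2(\varphi \circ \sigma_s, \alpha_s \circ \varphi) \le \delta$ holds for $s \in F$: the only errors come from the at most $(\tau + \eta)d_i + \eta' d_i$ coordinates near tile boundaries or outside the good set $\cB$, plus the points where $\sigma$ fails to be multiplicative, and these contribute at most $(\text{diam}_\rho X)^2$ times a small density, so choosing $\tau, \eta, \eta'$ small (which Lemma~\ref{L-Rokhlin} permits) makes this $\le \delta$. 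Thus the map $\Psi_k: X^{\cC_k} \to \Map(\rho, F, \delta, \sigma_i)$ is well defined, and it is continuous and injective on each coordinate's $F_k$-orbit data. The point of the construction is that $\Psi_k^{-1}$ of $\cU^{d_i}$ restricted to the image refines, coordinatewise, the cover $\cU^{F_k}$ of $X^{\cC_k} \cong (X^{F_k})^{\cC_k}$, so by the superadditivity-type inequality $\cD(\cU_1 \times \cU_2) \ge \cD(\cU_1) + \cD(\cU_2)$ for covers of products (the analogue, used in \cite{LW}, of \cite[Corollary 2.5]{LW}), one gets $\cD(\cU^{d_i}|_{\Map}) \ge \sum_k |\cC_k| \cdot \cD(\cU^{F_k})$. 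Since $\sum_k |\cC_k||F_k| = |\bigcup_k \sigma(F_k)\cC_k| \ge (1 - \tau - \eta)d_i$ and each $\cD(\cU^{F_k})/|F_k| \ge \mdim(\cU) - \varepsilon$, this yields $\cD(\cU, \rho, F, \delta, \sigma_i)/d_i \ge (1 - \tau - \eta)(\mdim(\cU) - \varepsilon)$ for all large $i$.

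Taking $\varlimsup_i$, then $\inf_\delta$, then $\inf_F$, and finally letting $\varepsilon, \tau, \eta \to 0$ gives $\cD_\Sigma(\cU) \ge \mdim(\cU)$; taking the supremum over $\cU$ gives $\mdim_\Sigma(X) \ge \mdim(X)$. The main obstacle I anticipate is the lower bound $\cD(\cU^{d_i}|_{\Map}) \ge \sum_k |\cC_k|\,\cD(\cU^{F_k})$: one must check that the relevant coordinates of $\Map(\rho, F, \delta, \sigma_i)$ genuinely contain a full product $\prod_k X^{\cC_k}$ as a \emph{retract} or at least that the restriction map onto it is surjective with a section, so that lower bounds on $\cD$ for the product cover pull back — covering dimension and $\cD$ are only monotone under surjections in the right direction, so the argument needs the product $\prod_k \Psi_k$ to map onto a subset on which $\cU^{d_i}$ restricts to (something refined by) the product cover, and one needs the elementary fact that $\cD$ of a cover of $Y$ is at least $\cD$ of its restriction to any retract of $Y$. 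Handling the boundary coordinates and the off-tile coordinates (where $\varphi \equiv x_0$, contributing $0$ to $\cD$ but possibly spoiling refinement) is the delicate bookkeeping step.
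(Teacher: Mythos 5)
Your construction of the maps $\varphi$ tile-by-tile is essentially the right geometric picture, but the step you yourself flag as the main obstacle is in fact a genuine gap: the ``superadditivity-type inequality'' $\cD(\cU_1\times\cU_2)\ge\cD(\cU_1)+\cD(\cU_2)$ is false in general. What \cite[Corollary 2.5]{LW} gives is the opposite (subadditive) inequality, and superadditivity cannot hold because covering dimension itself is not superadditive under products: by Boltyanskii's example there are compact metrizable $X$ with $\dim X^2<2\dim X$, so for a fine enough cover $\cU$ one has $\cD(\cU\times\cU)\le\dim X^2<2\cD(\cU)$. Concretely, your parameterization realizes (a retract homeomorphic to) $\prod_k X^{\cC_k}$ inside $\Map(\rho,F,\delta,\sigma_i)$ with $\cU^{d_i}$ pulling back to the product cover $\prod_{k,c}\cU^{F_k}$, and the needed bound $\cD\bigl(\prod_{k,c}\cU^{F_k}\bigr)\ge\sum_k|\cC_k|\,\cD(\cU^{F_k})$ is precisely the false superadditivity. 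A telling symptom is that your argument never uses that $G$ is infinite, whereas the lemma is stated only for countably infinite $G$ and Remark~\ref{R-sofic top not equal to top for finite} together with Lemma~\ref{L-top mean dim upper bound finite} show it actually fails for finite groups (for Boltyanskii-type $X$); so no bookkeeping of boundary or off-tile coordinates can rescue the product route.

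The paper circumvents products entirely. After the same Rokhlin step, it uses the infinitude of $G$ to choose maps $\psi_k:\cC_k\to G$ so that $(s,c)\mapsto s\psi_k(c)$ is injective on $\bigsqcup_k F_k\times\cC_k$, with range $\tilde F\subseteq G$. Since each $F_k$ is $(K,\varepsilon)$-invariant and $\tilde F$ is a disjoint union of right translates $F_k\psi_k(c)$, the set $\tilde F$ is itself $(K,\varepsilon)$-invariant and $|\tilde F|\ge(1-\delta')d$. One then maps a \emph{single} copy of $X$ into $\Map(\rho,F,\delta,\sigma)$ by $\varphi_x(sc)=s\psi_k(c)x$ (and $\varphi_x=x_0$ off the tiles); the pullback of $\cU^d|_{\Map(\rho,F,\delta,\sigma)}$ under $x\mapsto\varphi_x$ is exactly $\cU^{\tilde F}$, so monotonicity of $\cD$ under pullback by a continuous map gives $\cD(\cU,\rho,F,\delta,\sigma)\ge\cD(\cU^{\tilde F})\ge(\mdim(\cU)-\theta)|\tilde F|\ge(\mdim(\cU)-\theta)(1-\delta')d$, using the Ornstein--Weiss convergence on sufficiently invariant sets. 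This replaces the (false) additivity over tiles by a single application of the definition of $\mdim(\cU)$ to one large almost-invariant set, and is where the hypothesis that $G$ is infinite enters. If you want to salvage your write-up, you should replace the product/superadditivity step by this relabeling device.
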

\begin{proof}It suffices to show that $\cD_\Sigma(\cU)\ge \mdim(\cU)-2\theta$ for every $\theta>0$.

Fix a compatible metric $\rho$ on $X$. Let $F$ be a nonempty finite subset of $G$ and $\delta > 0$. Let $\sigma$ be a map from $G$ to $\Sym (d)$
for some $d\in\Nb$.
Now it suffices to show that if $\sigma$ is a good enough sofic approximation then
\begin{align*}
\frac{\cD(\cU, \rho, F, \delta, \sigma)}{d} \ge \mdim(\cU) - 2\theta .
\end{align*}

Take a finite subset $K$ of $G$ containing $F$ and $\varepsilon>0$ such that for any nonempty finite subset $F'$ of $G$ with $|KF'\setminus F'|<\varepsilon |F'|$ one has
$$ \frac{\cD(\cU^{F'})}{|F'|}\ge \mdim(\cU)-\theta.$$

Take $0<\delta'<1$ to be small such that
$(\mdim(\cU)-\theta)(1-\delta')\ge \mdim(\cU) - 2\theta$ and $\sqrt{\delta'}\diam(X, \rho)<\delta/2$.
By Lemma~\ref{L-Rokhlin} there are an $\ell\in \Nb$
and nonempty finite subsets $F_1, \dots, F_\ell$ of $G$
satisfying $|KF_k\setminus F_k|<\min(\varepsilon, \delta') |F_k|$ for all $k=1, \dots, \ell$
such that for every map $\sigma : G\to\Sym(d)$ for some $d\in \Nb$ which is a good
enough sofic approximation for $G$ and every $\cW\subseteq [d]$ with $|\cW|\ge (1-\delta'/2)d$ there exist  $\cC_1, \dots, \cC_\ell\subseteq \cW$ satisfying the following:
\begin{enumerate}
\item for every $k=1, \dots, \ell$, the map $(s, c)\mapsto \sigma_s(c)$ from $F_k\times \cC_k$ to $\sigma(F_k)\cC_k$ is bijective,

\item the sets $\sigma(F_1)\cC_1, \dots, \sigma(F_\ell)\cC_\ell $ are pairwise disjoint and $|\bigcup_{k=1}^\ell\sigma(F_k)\cC_k|\ge (1-\delta')d$.
\end{enumerate}

Let $\sigma: G\rightarrow \Sym(d)$ for some $d\in \Nb$ be a good enough sofic approximation for $G$ such that $|\cW|\ge (1-\delta'/2)d$ for
$$ \cW:=\{a\in [d]: \sigma_t\sigma_s(a)=\sigma_{ts}(a) \mbox{ for all } t\in F, s\in \bigcup_{k=1}^\ell F_k\}.$$
Then we have $\cC_1, \dots, \cC_\ell$ as above.

Since $G$ is infinite, there exist maps $\psi_k: \cC_k\rightarrow G$ for  $k=1, \dots, \ell$ such that
the map $\Psi$ from $\bigsqcup_{k=1}^\ell F_k\times \cC_k$ to $G$ sending $(s, c)\in F_k\times \cC_k$ to $s\psi_k(c)$ is injective.
Denote by $\tilde{F}$ the range of $\Psi$. Note that $|K\tilde{F}\setminus \tilde{F}|<\varepsilon |\tilde{F}|$ because for every $k$, $|KF_k\setminus F_k|<\varepsilon |F_k|$. Thus
$$  \frac{\cD(\cU^{\tilde{F}})}{|\tilde{F}|}\ge \mdim(\cU)-\theta.$$

Pick $x_0\in X$. For each $x\in X$ define a map $\varphi_x: [d]\rightarrow X$ by $\varphi_x(a)=x_0$ for all $a\in [d]\setminus \bigcup_{k=1}^\ell\sigma(F_k)\cC_k$, and
$$ \varphi_x(sc)=s\psi_k(c)x$$
for all $k\in \{1, \dots, \ell\}$, $c\in \cC_k$, and $s\in F_k$.
By Lemma~\ref{L-construct map} one has $\varphi_x \in\Map (\rho , F ,\delta , \sigma )$.

Note that the map $\Phi$ from $X$ to $\Map (\rho , F ,\delta , \sigma )$ sending $x$ to $\varphi_x$ is continuous, and $\Phi^{-1}(\cU^d|_{\Map (\rho , F ,\delta, \sigma )})=\cU^{\tilde{F}}$. Thus $\cD(\cU, \rho, F, \delta, \sigma)\ge \cD(\cU^{\tilde{F}})$. Therefore
\begin{align*}
\frac{\cD(\cU, \rho, F, \delta, \sigma)}{d} \ge \frac{\cD(\cU^{\tilde{F}})}{|\tilde{F}|}\cdot \frac{|\tilde{F}|}{d}\ge (\mdim(\cU)-\theta)(1-\delta')\ge \mdim(\cU) - 2\theta,
\end{align*}
as desired.
\end{proof}

Let $\cU$ be a finite open cover of a compact metrizable space $X$. A continuous map $f$ from $X$ into another compact metrizable space $Y$ is said to be
{\it $\cU$-compatible} if for each $y\in Y$, the set $f^{-1}(y)$ is contained in some $U\in \cU$ \cite[Definition 2.2 and Proposition 2.3]{LW}. We need the following fact:

\begin{lemma}\cite[Proposition 2.4]{LW} \label{L-compatible map}
Let $\cU$ be a finite open cover of a compact metrizable space $X$, and $k\ge 0$. Then $\cD(\cU)\le k$ if and only if there is a $\cU$-compatible continuous map
$f: X\rightarrow Y$ for some  compact metrizable space $Y$ with dimension $k$.
\end{lemma}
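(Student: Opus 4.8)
The plan is to prove both implications of this classical fact from \cite{LW} by the standard partition-of-unity (nerve) construction on the $X$-side, together with a fibre-fattening argument on the target side.

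For the direction ``there exists such an $f$ $\Longrightarrow$ $\cD(\cU)\le k$'': assume $f\colon X\to Y$ is $\cU$-compatible with $\dim Y=k$. First I would fatten the point-fibres of $f$ into open sets: for each $y\in Y$ choose $U_y\in\cU$ with $f^{-1}(y)\subseteq U_y$; since $X\setminus U_y$ is compact and $y\notin f(X\setminus U_y)$, the set $W_y:=Y\setminus f(X\setminus U_y)$ is an open neighbourhood of $y$ with $f^{-1}(W_y)\subseteq U_y$. The $W_y$ form an open cover of $Y$; extract a finite subcover $\cW$. Because $\dim Y=k$ there is a finite open cover $\cW'$ of $Y$ with $\cW'\succ\cW$ and $\ord(\cW')\le k$, and then $\cV:=\{f^{-1}(W'):W'\in\cW'\}$ is a finite open cover of $X$ that refines $\cU$ (each $f^{-1}(W')$ lies inside some $f^{-1}(W_y)\subseteq U_y$) and satisfies $\sum_{W'\in\cW'}1_{f^{-1}(W')}(x)=\sum_{W'\in\cW'}1_{W'}(f(x))\le\ord(\cW')+1$ for every $x\in X$, so $\ord(\cV)\le k$. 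Hence $\cD(\cU)\le\ord(\cV)\le k$.

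For the converse ``$\cD(\cU)\le k$ $\Longrightarrow$ such an $f$ exists'': pick a finite open cover $\cV=\{V_1,\dots,V_n\}$ refining $\cU$ with $\ord(\cV)\le k$, and a partition of unity $\phi_1,\dots,\phi_n$ subordinate to $\cV$ (so $\{\phi_j>0\}\subseteq V_j$; such a partition exists since $X$ is compact metrizable, hence normal). Set $f(x)=(\phi_1(x),\dots,\phi_n(x))\in\Rb^n$ and $Y_0:=f(X)$; at each $x$ at most $k+1$ of the coordinates are nonzero, so $Y_0$ is a closed subset of the $k$-skeleton of the standard $(n-1)$-simplex and therefore $\dim Y_0\le k$. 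The map $f$ is $\cU$-compatible, because $f(x)=f(x')$ forces $x,x'\in\{\phi_j>0\}\subseteq V_j$ for any $j$ with $\phi_j(x)>0$, and $V_j$ lies in some member of $\cU$. To obtain a target of dimension exactly $k$ I would replace $Y_0$ by $Y:=Y_0\times[0,1]^{\,k-\dim Y_0}$ and $f$ by $x\mapsto(f(x),0)$; this remains $\cU$-compatible, and $\dim Y=\dim Y_0+(k-\dim Y_0)=k$ by the classical identity $\dim(Z\times[0,1])=\dim Z+1$ for nonempty compact metrizable $Z$, applied inductively.

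The ingredients I would take as known are: existence of subordinate partitions of unity for finite open covers of normal spaces; the facts that the $k$-skeleton of a simplex is compact metrizable of covering dimension $k$ and that covering dimension does not increase on closed subspaces; and the product formula $\dim(Z\times[0,1])=\dim Z+1$. No step is a real obstacle; the only slightly delicate points are the fibre-fattening passage to $W_y=Y\setminus f(X\setminus U_y)$ in the first implication and the exact-dimension adjustment in the second, with everything else being direct bookkeeping against the definition of $\ord$.
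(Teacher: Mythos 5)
The paper states this lemma only as a citation to \cite[Proposition 2.4]{LW} and gives no proof, and your argument is correct and is essentially the standard one from that source: pulling back a low-order refinement of a cover of $Y$ through the fattened fibres for one implication, and mapping into the $k$-skeleton of a simplex via a partition of unity subordinate to a refinement $\cV\succ\cU$ with $\ord(\cV)\le k$ for the other. The only nonstandard touch is the exact-dimension adjustment via $\dim(Z\times[0,1])=\dim Z+1$, which is a valid (if heavier than necessary) classical fact for compacta; taking $Y$ to be the disjoint union of $Y_0$ with a $k$-cube would achieve the same thing with no appeal to product theorems.
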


Next we show $\mdim_\Sigma(X)\le \mdim(X)$.
We first use Lemma~\ref{L-Rokhlin} to decompose $[d]$ into the disjoint union of some approximately left invariant finite subsets $\{H_j\}_{j\in J}$ of $G$ and a small portion $[d]\setminus \bigcup_jH_j$. Take a continuous $\cU^{H_j}$-compatible map $X\rightarrow Y_j$ with $\dim(Y_j)\le \cD(\cU^{H_j})$ for each $j\in J$. Anticipating each element of $\Map(\rho, F, \delta, \sigma)$ being essentially of the form $(sx_j)_{s\in H_j}$ with some $x_j\in X$ on $H_j$ for each $j\in J$, we map
$\Map(\rho, F, \delta, \sigma)$ to $\prod_jY_j$.  To take care of the coordinates on $[d]\setminus \bigcup_jH_j$, we also take a continuous $\cU$-compatible map $X\rightarrow Z$ with $\dim(Z)\le \cD(\cU)$,  and map $\Map(\rho, F, \delta, \sigma)$ to $\prod_{a\in [d]\setminus \bigcup_jH_j}Z$. These two maps combined together are $\cU^d|_{\Map(\rho, F, \delta, \sigma)}$-compatible on nice elements which are almost of the form $(sx_j)_{s\in H_j}$ with some $x_j\in X$ on $H_j$ for each $j\in J$.
The points of $\Map(\rho, F, \delta, \sigma)$ not so nice can be bad only at a small portion of $[d]$. Then we use some auxiliary map $h$ to shrink $X$ to one point at all the good places, making the bad parts to live in a small-dimensional space (relative to $d$).

\begin{lemma} \label{L-top mean dim upper bound}
Let a countable amenable group $G$ act continuously on a compact metrizable space $X$.
Then
$\mdim_\Sigma(X)\le \mdim(X)$.
\end{lemma}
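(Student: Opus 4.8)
The plan is to show that for every finite open cover $\cU$ of $X$ one has $\cD_\Sigma(\cU)\le\mdim(\cU)$, which gives $\mdim_\Sigma(X)\le\mdim(X)$ after taking the supremum over $\cU$. Fix a compatible metric $\rho$ on $X$ and a real number $k>\mdim(\cU)$; it suffices to produce, for arbitrarily left-invariant finite $F\subseteq G$ and small $\delta>0$, a bound $\varlimsup_i \cD(\cU,\rho,F,\delta,\sigma_i)/d_i\le k$. The strategy is to build a single $\cU^{d}|_{\Map(\rho,F,\delta,\sigma)}$-compatible continuous map from $\Map(\rho,F,\delta,\sigma)$ into a compact metrizable space whose dimension is at most $k\cdot d$ (up to a controlled error term of size $o(d)$), invoking Lemma~\ref{L-compatible map} in both directions: first to extract from $\mdim(\cU)<k$ a $\cU^{F'}$-compatible map $g_{F'}\colon X\to Y_{F'}$ with $\dim Y_{F'}\le k|F'|$ for suitable almost-invariant blocks $F'$, and then to assemble these into a compatible map on the whole map space and read off the dimension bound.

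The key steps, in order, are as follows. First, choose a finite $K\supseteq F$ and $\varepsilon>0$ such that $|KF'\setminus F'|<\varepsilon|F'|$ forces $\cD(\cU^{F'})/|F'|\le k$, hence by Lemma~\ref{L-compatible map} a $\cU^{F'}$-compatible continuous map $g_{F'}\colon X\to Y_{F'}$ into a compact metrizable $Y_{F'}$ with $\dim Y_{F'}\le k|F'|$. Second, apply the Rokhlin lemma (Lemma~\ref{L-Rokhlin}) with these invariance parameters to get tiles $F_1,\dots,F_\ell$ and, for a good enough $\sigma\colon G\to\Sym(d)$, sets $\cC_1,\dots,\cC_\ell\subseteq[d]$ with $(s,c)\mapsto\sigma_s(c)$ bijective onto the pairwise-disjoint $\sigma(F_k)\cC_k$, whose union covers all but a $\delta'$-fraction of $[d]$. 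Third, on the tile $\sigma(F_k)\cC_k$, factor a map $\varphi\in\Map(\rho,F,\delta,\sigma)$ through $g_{F_k}$: more precisely, for $c\in\cC_k$ record the point $(\varphi(\sigma_s(c)))_{s\in F_k}\in X^{F_k}$, push it forward by $g_{F_k}$ (after identifying $X^{F_k}$ with $X$-valued maps on $F_k$ so that $g_{F_k}$ is the relevant $\cU^{F_k}$-compatible map, noting $\cU^{F_k}=\bigvee_{s\in F_k}s^{-1}\cU$ is — up to the group-action bookkeeping — the cube cover $\cU^{|F_k|}$ pulled back), and on the leftover coordinates in $[d]\setminus\bigcup_k\sigma(F_k)\cC_k$ simply remember $\varphi$ directly into a copy of $X$. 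This defines a continuous map $\Theta\colon\Map(\rho,F,\delta,\sigma)\to\prod_{k=1}^\ell Y_{F_k}^{\cC_k}\times X^{([d]\setminus\bigcup_k\sigma(F_k)\cC_k)}$, whose target has dimension at most $\sum_k|\cC_k|\cdot k|F_k| + \delta' d\cdot\dim X\le kd + \delta' d\,\dim X$, since $\sum_k|\cC_k||F_k|\le d$. Fourth, check that $\Theta$ is $\cU^{d}|_{\Map(\rho,F,\delta,\sigma)}$-compatible: if $\Theta(\varphi)=\Theta(\psi)$ then on the leftover coordinates $\varphi=\psi$, while on each tile the $\cU^{F_k}$-compatibility of $g_{F_k}$ forces $\varphi(\sigma_s(c))$ and $\psi(\sigma_s(c))$ to lie in a common member of $\cU$ for each $s\in F_k$, $c\in\cC_k$; so $\varphi$ and $\psi$ lie in a common member of $\cU^d$ coordinatewise. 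Hence $\cD(\cU,\rho,F,\delta,\sigma)\le kd+\delta' d\,\dim X$. Fifth, divide by $d$, take $\varlimsup_{i\to\infty}$ over $\sigma_i$, then let $\delta'\to 0$ (which is legitimate since the construction only required $\sigma$ good enough and $\cW$ large, both guaranteed for large $i$), obtaining $\cD_\Sigma(\cU,\rho,F,\delta)\le k$, and finally take $\inf$ over $\delta$, $\inf$ over $F$, $\inf$ over $k>\mdim(\cU)$, and $\sup$ over $\cU$.

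The main obstacle I expect is Step four together with the precise identification in Step three: one must verify that the cover $\cU^{F_k}=\bigvee_{s\in F_k}s^{-1}\cU$ on $X$, when pulled back along the coordinate map $c\mapsto(\varphi(\sigma_s(c)))_{s\in F_k}$ into $X^{F_k}$, matches up correctly with the cube cover $\cU^{|F_k|}$ restricted to the map space, so that a $\cU^{F_k}$-compatible map genuinely controls the relevant coordinates of $\cU^d$. The subtlety is that the tiling is only approximate — the bijection $(s,c)\mapsto\sigma_s(c)$ identifies $\bigsqcup_k F_k\times\cC_k$ with a large subset of $[d]$, but the action of $s\in F$ on such an index need not stay inside a single tile, which is why it is essential that $\Map$-membership is only used to guarantee continuity of $\Theta$ (the $\rho_2$-condition plays no role in compatibility here, unlike in the lower bound) and that the compatibility argument is purely combinatorial on coordinates. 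Care is also needed to ensure the leftover-coordinate contribution $\delta' d\,\dim X$ is genuinely $o(d)$ relative to the choices, i.e. that $\delta'$ can be sent to $0$ independently of everything else; this is handled by the order of quantifiers, choosing $\delta'$ last before $\sigma$. Everything else — continuity of $\Theta$, the dimension estimate for a finite product, and the limit manipulations — is routine given Lemmas~\ref{L-Rokhlin} and \ref{L-compatible map} and the subadditivity $\dim(\prod)\le\sum\dim$ for finite products of compact metrizable spaces.
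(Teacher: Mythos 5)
Your overall architecture (Rokhlin tiling, assemble tile-wise data, invoke Lemma~\ref{L-compatible map} twice) matches the paper's, but the key Step three identification is not valid, and this is a genuine gap rather than a technicality. The quantity $\cD(\cU^{F_k})\le k|F_k|$ concerns the join cover $\bigvee_{s\in F_k}s^{-1}\cU$ of the space $X$ itself: the compatible map $g_{F_k}$ is defined on $X$, and its fibers lie in sets of the form $\bigcap_{s\in F_k}s^{-1}U_s\subseteq X$. It cannot be applied to an arbitrary tuple $(\varphi(\sigma_s(c)))_{s\in F_k}\in X^{F_k}$, and there is no map on the product $X^{F_k}$ compatible with the cube cover whose range has dimension of order $|F_k|\,\mdim(\cU)$ --- the whole point of mean dimension is that the drop from $|F_k|\,\cD(\cU)$ to $|F_k|\,\mdim(\cU)$ is a dynamical phenomenon, visible only along (approximate) orbit tuples $(s x)_{s\in F_k}$. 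Consequently your explicit claim that ``the $\rho_2$-condition plays no role in compatibility'' is exactly where the argument collapses: if you refuse to use approximate equivariance, then recording $g_{F_k}(\varphi(c))$ controls $s\varphi(c)$ for $s\in F_k$ but says nothing about $\varphi(\sigma_s(c))$, while recording the whole tuple forces you back to bounds of order $|F_k|\,\cD(\cU)$, yielding at best $\cD_\Sigma(\cU)\le\cD(\cU)$, not $\le\mdim(\cU)$. The paper's proof records only $f_k(\varphi(c))$ at the base points $c\in\cC_k$, where $f_k$ is compatible for $\cV^{F_k}$ with $\cV$ a shrinking of $\cU$ satisfying $B(V,\eta)\subseteq U$, and then uses the $\rho_2$-condition: for $a=\sigma(s^{-1})c$, $s^{-1}\in F_k$, one gets $\rho(\varphi(a),s^{-1}\varphi(c))<\eta$ whenever the equivariance defect at $a$ is below a threshold $\kappa$. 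Since the set of coordinates where this fails depends on $\varphi$ (it is not absorbed by the fixed leftover set $\cZ$ of the tiling), the paper adds the extra coordinate $h$ into the low-dimensional space $X_0\subseteq([0,1]^{\cU})^{[d]}$, which simultaneously flags and handles those bad coordinates. Your plan has no substitute for the $\cV$/$\eta$ margin, no mechanism for the $\varphi$-dependent exceptional set, and it also obscures the needed relation between $F$ and the tiles (the paper takes $F=\bigcup_k F_k^{-1}$ after the Rokhlin lemma precisely so that equivariance can be used inside each tile; one specific $F$ suffices because $\cD_\Sigma(\cU,\rho)$ is an infimum over $F$).

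A second, smaller gap: on the leftover coordinates you store $\varphi$ itself, giving a contribution $\delta' d\,\dim X$, which is vacuous when $\dim X=\infty$ --- the typical setting where mean dimension is of interest (e.g.\ subshifts of $([0,1]^n)^G$). The paper instead composes with a $\cU$-compatible map $g\colon X\to Z$ with $\dim Z\le\cD(\cU)<\infty$ and chooses $\tau$ with $\tau\,\cD(\cU)\le\theta$, so the leftover term is genuinely $o(d)$. This part is easy to repair, but the missing use of approximate equivariance together with the shrunken cover and the correction coordinate $h$ is the substance of the proof and is absent from your proposal.
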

\begin{proof} Fix a compatible metric $\rho$ on $X$.
Let $\cU$ be a finite open cover of $X$. It suffices to show that $\cD_\Sigma(\cU)\le \mdim(X)$.

Take a finite open cover $\cV$ of $X$ such that for every $V\in \cV$, one has $\overline{V}\subseteq U$ for some $U\in \cU$. Then it suffices to show
$\cD_\Sigma(\cU)\le \mdim(\cV)+3\theta$ for every $\theta>0$. We can find $\eta>0$ such that for every $V\in \cV$, one has $B(V, \eta)=\{x\in X: \rho(x, V)<\eta\}\subseteq U$ for some $U\in \cU$.

Take a nonempty finite subset $K$ of $G$ and $\varepsilon>0$ such that for any nonempty finite subset $F'$ of $G$ with $|KF'\setminus F'|<\varepsilon |F'|$ one has
$$ \frac{\cD(\cV^{F'})}{|F'|}\le \mdim(\cV)+\theta.$$

Take $\tau>0$ with $\tau \cD(\cU)\le \theta$.
By Lemma~\ref{L-Rokhlin}  there are an $\ell\in \Nb$
and nonempty finite subsets $F_1, \dots, F_\ell$ of $G$
satisfying $|KF_k\setminus F_k|<\varepsilon |F_k|$ for all $k=1, \dots, \ell$
such that for every map $\sigma : G\to\Sym(d)$ for some $d\in \Nb$ which is a good
enough sofic approximation for $G$  and every $\cW\subseteq [d]$ with $|\cW|\ge (1-\tau/2)d$ there exist  $\cC_1, \dots, \cC_\ell\subseteq \cW$ satisfying the following:
\begin{enumerate}
\item for every $k=1, \dots, \ell$, the map $(s, c)\mapsto \sigma_s(c)$ from $F_k\times \cC_k$ to $\sigma(F_k)\cC_k$ is bijective,

\item the sets $\sigma(F_1)\cC_1, \dots, \sigma(F_\ell)\cC_\ell$ are pairwise disjoint and $|\bigcup_{k=1}^\ell \sigma(F_k)\cC_k|\ge (1-\tau)d$.
\end{enumerate}

Set $F=\bigcup_{k=1}^\ell F_k^{-1}$. Take $\kappa>0$ such that for any $x, y\in X$ with $\rho(x, y)\le \kappa$ one has $\rho(s^{-1}x, s^{-1}y)<\eta$ for all $s\in F$.
Take $\delta>0$ with $\delta^{1/2}<\kappa$ and $\delta |\cU| |F|\le \theta$.

Let $\sigma$ be a map from $G$ to $\Sym (d)$
for some $d\in\Nb$.
Now it suffices to show that if $\sigma$ is a good enough sofic approximation then
\begin{align*}
\frac{\cD(\cU, \rho, F, \delta, \sigma)}{d} \le \mdim(\cV) + 3\theta .
\end{align*}

Denote by $\cW$ the subset of $[d]$ consisting of $a$ satisfying $\sigma_s\sigma_{s^{-1}}(a)=\sigma_{e_G}(a)=a$ for all $s\in F$.
Assuming that $\sigma$ is a good enough sofic approximation,  we have $|\cW|\ge (1-\tau/2)d$ and can find $\cC_1, \dots, \cC_\ell$ as above.
Set $\cZ=[d]\setminus \bigcup_{k=1}^\ell \sigma(F_k)\cC_k$. Then $|\cZ|\le \tau d$.

For every $\varphi\in\Map (\rho ,F, \delta, \sigma )$,
by Lemma~\ref{L-almost}
the set $\Lambda_{\varphi}$
of all $a\in [d]$ satisfying
\[
\rho(\varphi (sa), s\varphi(a)) \le \delta^{1/2}
\]
for all $s\in F$ has cardinality at least $(1-|F|\delta) d$.

Take a partition of unity $\{\zeta_U\}_{U\in \cU}$ for $X$ subordinate  to $\cU$. That is, each $\zeta_U$ is a continuous function $X\rightarrow [0, 1]$ with support contained in $U$, and
$$ \sum_{U\in \cU}\zeta_U=1.$$
Define a continuous map $\overrightarrow{\xi}: X \rightarrow [0, 1]^{\cU}$ by $\overrightarrow{\xi}(x)_U=\zeta_U(x)$ for $x\in X$ and $U
\in \cU$. Consider the continuous map $\overrightarrow{h}: \Map(\rho, F, \delta, \sigma) \rightarrow ([0, 1]^{\cU})^{[d]}$ defined by
$$\overrightarrow{h}(\varphi)_a=\overrightarrow{\xi}(\varphi(a))\max(\max_{s\in F}\rho(s \varphi(a), \varphi(sa))-\kappa, 0)$$
for $\varphi \in \Map(\rho, F, \delta, \sigma)$ and $a\in [d]$.
Denote by $\nu$ the point of $[0, 1]^{\cU}$ having all coordinates $0$.
Set $X_0$ to be the subset of  $([0, 1]^{\cU})^{[d]}$ consisting of elements whose coordinates are equal to $\nu$ at at least $(1-|F|\delta) d$  elements of $[d]$.
For each $\varphi\in \Map(\rho, F, \delta, \sigma)$, note that  $\overrightarrow{h}(\varphi)_a=\nu$ for all $a\in \Lambda_\varphi$ by our choice of $\delta$ and hence $\overrightarrow{h}(\varphi)\in X_0$. Thus we may think of $\overrightarrow{h}$ as a map from $\Map(\rho, F, \delta, \sigma)$ into $X_0$. Since the union of finitely many closed subsets of dimension at most $m$ has dimension at most $m$ \cite[page 30 and Theorem V.8]{HW} and $\delta>0$ was chosen such that $|\cU| |F| \delta\le \theta$, we get $\dim(X_0)\le |\cU| |F|\delta d\le \theta d$.

For each $1\le k\le \ell$, by Lemma~\ref{L-compatible map} we can find a compact metrizable space $Y_k$ with $\dim(Y_k)\le \cD(\cV^{F_k})$ and a $\cV^{F_k}$-compatible continuous map $f_k:X\rightarrow Y_k$.

By Lemma~\ref{L-compatible map} we can find a compact metrizable space $Z$ with $\dim(Z)\le \cD(\cU)$ and a $\cU$-compatible continuous map $g:X\rightarrow Z$.

Now define a continuous map $\Psi: \Map(\rho, F, \delta, \sigma)\rightarrow X_0\times (\prod_{k=1}^\ell\prod_{c\in \cC_k}Y_k)\times (\prod_{a\in \cZ}Z)$
as follows. For $\varphi\in \Map(\rho, F, \delta, \sigma)$, the coordinate of $\Psi(\varphi)$ in $X_0$ is $\overrightarrow{h}(\varphi)$, in $Y_k$ for $1\le k\le \ell$ and $c\in \cC_k$ is $f_k(\varphi(c))$, in $Z$ for $a\in \cZ$ is $g(\varphi(a))$. We claim that $\Psi$ is $\cU^d|_{\Map(\rho, F, \delta, \sigma)}$-compatible.
Let $w\in X_0\times (\prod_{k=1}^\ell\prod_{c\in \cC_k}Y_k)\times (\prod_{a\in \cZ}Z)$. We need to show that for each $a\in [d]$ there is some
$U\in \cU$ depending only on $w$ and $a$ such that $\varphi(a)\in U$ for every  $\varphi \in \Psi^{-1}(w)$.
We write the coordinates of $w$ in  $X_0$, $\prod_{k=1}^\ell\prod_{c\in \cC_k}Y_k$, and $\prod_{a\in \cZ}Z$ as $w^1$, $w^2$, and $w^3$ respectively.

For each $a\in \cZ$, since $g$ is $\cU$-compatible, one has $g^{-1}(w^3_a)\subseteq U_{w^3_a}$ for some $U_{w^3_a}\in \cU$. Then $\varphi(a)\in U_{w^3_a}$ for every
$\varphi \in \Psi^{-1}(w)$ and
$a\in \cZ$.

For every $1\le k\le \ell$ and $c\in \cC_k$, since $f_k$ is $\cV^{F_k}$-compatible, one has $f_k^{-1}(w^2_{k, c})\subseteq \bigcap_{s^{-1}\in F_k}sV_{k, c, s}$ for some $V_{k, c, s}\in \cV$ for every $s^{-1}\in F_k$. By the choice of $\eta$,
$B(V_{k, c, s}, \eta)$ is contained in some $U_{k, c, s}\in \cU$.
For every $a\in [d]\setminus \cZ$, we distinguish the two cases $w^1_a\neq \nu$ and $w^1_a=\nu$. If $w^1_a\neq \nu$, then $(w^1_a)_U\neq 0$ for some $U\in \cU$, and then for $\varphi \in \Psi^{-1}(w)$ one has $\zeta_U(\varphi(a))>0$ and hence  $\varphi(a)\in U$.
Suppose that $w^1_a=\nu$.  Say, $a=\sigma(s^{-1})c$ for some $1\le k\le \ell$, $s^{-1}\in F_k$ and $c\in \cC_k$. Let $\varphi \in \Psi^{-1}(w)$.
Since $c\in \cC_k\subseteq \cW$ and $s\in F$, one has $sa=\sigma_{s}\sigma_{s^{-1}}(c)=c$.
As $\{\zeta_U\}_{U\in \cU}$ is a partition of unity of $X$, $\overrightarrow{\xi}(\varphi(a))\neq \nu$. But $\overrightarrow{h}(\varphi)_a=w^1_a=\nu$.
Thus $\max_{s'\in F}\rho(s' \varphi(a), \varphi(s'a))\le \kappa$. In particular,
one has
$\rho(s\varphi(a), \varphi(c))=\rho(s\varphi(a), \varphi(sa))\le \kappa$. From our choice of $\kappa$, one gets $\rho(\varphi(a), s^{-1}\varphi(c))<\eta$.
Since $f_k(\varphi(c))=w^2_{k, c}$, we have $\varphi(c)\in sV_{k, c, s}$ and hence $s^{-1}\varphi(c)\in V_{k, c, s}$. Therefore $\varphi(a)\in U_{k, c, s}$.
This proves the claim.

From Lemma~\ref{L-compatible map}  we get
$$ \cD(\cU, \rho, F, \delta, \sigma) \le \dim\big(X_0\times (\prod_{k=1}^\ell\prod_{c\in \cC_k}Y_k)\times (\prod_{a\in \cZ}Z)\big).$$
Since the dimension of the product of two compact metrizable spaces is at most the sum of the dimensions of the factors \cite[page 33 and Theorem V.8]{HW},  we have
\begin{align*}
 \dim\big(X_0\times (\prod_{k=1}^\ell\prod_{c\in \cC_k}Y_k)\times (\prod_{a\in \cZ}Z)\big) &\le \dim(X_0)+\sum_{k=1}^\ell |\cC_k|\dim(Y_k)+|\cZ|\dim(Z)\\
 &\le \theta d+\sum_{k=1}^\ell |\cC_k|\cD(\cV^{F_k}) +|\cZ|\cD(\cU)\\
 &\le \theta d+\sum_{k=1}^\ell |\cC_k| |F_k|(\mdim(\cV)+\theta) +\tau d\cD(\cU)\\
 &\le \theta d+d(\mdim(\cV)+\theta) +\theta d\\
 &= d(\mdim(\cV)+3\theta).
\end{align*}
Therefore $\cD(\cU, \rho, F, \delta, \sigma) \le d(\mdim(\cV)+3\theta)$ as desired.
\end{proof}

\begin{remark} \label{R-sofic top not equal to top for finite}
Theorem~\ref{T-top mean dim} fails when $G$ is finite. Indeed, when a finite group $G$ acts continuously on a compact metrizable space $X$,
one has $\mdim(X)=\frac{1}{|G|}\dim(X)$. There are compact metrizable finite-dimensional spaces $X$ satisfying $\dim(X^2)<2\dim(X)$ (see \cite{Boltyanskii}). For such $X$,
Lemma~\ref{L-top mean dim upper bound finite} below implies that $\mdim_\Sigma(X)<\mdim(X)$.
\end{remark}

If $X$ is a compact metrizable space with finite dimension, then for any $n, m\in \Nb$ one has
$\dim(X^n\times X^m)\le \dim(X^n)+\dim(X^m)$ \cite[page 33 and Theorem V.8]{HW} and hence $\frac{\dim(X^n)}{n}\to \inf_{m\in \Nb}\frac{\dim(X^m)}{m}$ as $n\to \infty$.

\begin{lemma} \label{L-top mean dim upper bound finite}
Let a finite group $G$ act continuously on a compact metrizable finite-dimensional space $X$.
Then
$\mdim_\Sigma(X)\le \frac{1}{|G|}\inf_{m\in \Nb}\frac{\dim(X^m)}{m}$.
\end{lemma}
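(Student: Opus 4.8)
The plan is to mimic the proof of Lemma~\ref{L-top mean dim upper bound}, but use the finite group structure directly instead of the Rokhlin lemma, since for a finite group $G$ a good enough sofic approximation $\sigma: G\to\Sym(d)$ looks, on a subset $\cB\subseteq[d]$ of density close to $1$, like $|G|$ disjoint copies of the regular representation of $G$; that is, $[d]$ is almost partitioned into $\sigma(G)$-orbits each of size $|G|$. First I would fix a compatible metric $\rho$ on $X$, a finite open cover $\cU$, and (as in the proof above) choose a finite open cover $\cV$ with $\overline{V}\subseteq U$ for some $U\in\cU$ for each $V\in\cV$, and an $\eta>0$ so that each $B(V,\eta)$ lies in some member of $\cU$. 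It then suffices to bound $\cD_\Sigma(\cU)$ by $\frac{1}{|G|}\dim X^{|G|}+ (\text{small error})$, and then invoke $\frac{\dim X^n}{n}\to\inf_m\frac{\dim X^m}{m}$ together with $\dim X^{n}\times X^{m}\le \dim X^{n}+\dim X^{m}$ to pass from $|G|$ to an arbitrary multiple and get the infimum.

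Next I would set up the counting. Given $\theta>0$, pick $\tau>0$ with $\tau\cD(\cU)\le\theta$, let $F=G$ (or $\bigcup_{s\in G}\{s^{-1}\}=G$), choose $\kappa>0$ so that $\rho(x,y)\le\kappa$ forces $\rho(s^{-1}x,s^{-1}y)<\eta$ for all $s\in G$, and then $\delta>0$ with $\delta^{1/2}<\kappa$ and $|\cU||G|\delta\le\theta$. For a good enough sofic approximation $\sigma: G\to\Sym(d)$, the set $\cW$ of $a\in[d]$ on which $\sigma$ restricts to a genuine free $G$-action (i.e. $\sigma_s\sigma_t(a)=\sigma_{st}(a)$, $\sigma_e(a)=a$, and $\sigma_s(a)\ne\sigma_{s'}(a)$ for $s\ne s'$, all $s,t,s'\in G$) has $|\cW|\ge(1-\tau)d$ by the sofic conditions. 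Pick a transversal $\cC\subseteq\cW$ for the $\sigma(G)$-orbits inside $\cW$ (discarding a $\tau$-fraction of $[d]$, call the leftover $\cZ=[d]\setminus\sigma(G)\cC$ with $|\cZ|\le\tau d$, though one must be slightly careful that orbits inside $\cW$ have full size $|G|$, which freeness guarantees). Then, exactly as in the earlier proof, build the continuous map $\Psi$ on $\Map(\rho,F,\delta,\sigma)$ whose components are: the map $h$ into a low-dimensional space $X_0\subseteq([0,1]^\cU)^{[d]}$ (with $\dim X_0\le|\cU||G|\delta d\le\theta d$) recording where $\varphi$ fails to be equivariant; for each orbit representative $c\in\cC$ a $\cV^{G}$-compatible map $f: X\to Y$ with $\dim Y\le\cD(\cV^{G})=\dim X^{|G|}$ applied to $\varphi(c)$; and for each $a\in\cZ$ a $\cU$-compatible map $g: X\to Z$ with $\dim Z\le\cD(\cU)$ applied to $\varphi(a)$. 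Here $\cV^{G}=\bigvee_{s\in G}s^{-1}\cV$, and since $G$ is finite $\cD(\cV^{G})\le\dim X^{|G|}$ by Definition~\ref{D-order} and Lemma~\ref{L-compatible map} (indeed $\cV^G$ refines to a cover of multiplicity at most $|G|$ after embedding $X$ diagonally into $X^{|G|}$).

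The verification that $\Psi$ is $\cU^d|_{\Map(\rho,F,\delta,\sigma)}$-compatible is the same case analysis as before: for $a\in\cZ$, use $\cU$-compatibility of $g$; for $a=\sigma(s^{-1})c$ with $c\in\cC$, split on whether the $X_0$-coordinate at $a$ is the zero point $\nu$ — if not, some $\zeta_U(\varphi(a))>0$ so $\varphi(a)\in U$; if so, $\varphi$ is within $\kappa$ of equivariant at $a$, so $\rho(s\varphi(a),\varphi(c))\le\kappa$, hence $\rho(\varphi(a),s^{-1}\varphi(c))<\eta$, and $\cV^{G}$-compatibility of $f$ plus the choice of $\eta$ puts $\varphi(a)$ in a member of $\cU$ determined by $w$. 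Then Lemma~\ref{L-compatible map} gives $\cD(\cU,\rho,F,\delta,\sigma)\le\dim X_0+|\cC|\dim Y+|\cZ|\dim Z\le\theta d+\frac{d}{|G|}\dim X^{|G|}+\tau d\cD(\cU)\le\frac{d}{|G|}\dim X^{|G|}+2\theta d$, so dividing by $d$ and letting $\theta\to0$ yields $\mdim_\Sigma(X)\le\frac1{|G|}\dim X^{|G|}$. Finally, running the argument with $|G|$ replaced throughout by a multiple $n|G|$ — equivalently replacing $\cV^G$ by $\cV^{G}$ viewed inside $X^{n|G|}$, or more directly noting $\mdim_\Sigma$ is unchanged and $\cD(\cV^{G})\le\dim X^{|G|}$ can be improved by grouping $n$ orbits together — gives the bound with $\frac{\dim X^{n|G|}}{n|G|}$ for every $n$, hence the infimum; the cleanest route is just to cite the earlier product-dimension inequality to get $\inf_m\frac{\dim X^m}{m}$. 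I expect the main obstacle to be the bookkeeping around orbit sizes in $\cW$ and making precise the reduction to the $\inf_m$ form — concretely, ensuring that one may genuinely replace the single factor $\dim X^{|G|}$ by $\dim X^{m}$ for $m$ not a multiple of $|G|$, which requires either a mild refinement of the transversal argument (grouping representatives) or an appeal to the monotonicity $\frac{\dim X^n}{n}\to\inf_m\frac{\dim X^m}{m}$ stated just before the lemma.
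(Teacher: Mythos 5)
There is a real gap, and it sits exactly at the point where this lemma differs from the amenable case: obtaining $\frac{1}{|G|}\inf_{m}\frac{\dim X^m}{m}$ rather than $\frac{1}{|G|}\dim X$. Your core construction sends each orbit representative $c\in\cC$ separately through a $\cV^G$-compatible map $f:X\rightarrow Y$ with $\dim Y\le \cD(\cV^G)$; note first that $\cD(\cV^G)\le \dim X$ (it is the $\cD$-value of a finite open cover of $X$ itself), so the claimed identity $\cD(\cV^G)=\dim X^{|G|}$ is not right, and more importantly all you can say about the product target is $\dim\prod_{c\in\cC}Y\le|\cC|\dim Y$, since $Y$ is an auxiliary space about whose powers you know nothing. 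Hence the argument as written proves only $\mdim_\Sigma(X)\le\frac{1}{|G|}\dim X$, which by Boltyanski\v{i}'s example (see Remark~\ref{R-sofic top not equal to top for finite}) can be strictly larger than the asserted bound. Your proposed "cleanest route" -- citing the product-dimension inequality $\dim X^n\times X^m\le\dim X^n+\dim X^m$ -- cannot repair this: subadditivity only gives upper bounds of the form $m\dim X$ for $\dim X^m$ and goes in the wrong direction; there is no way to convert a bound by $\frac{1}{|G|}\dim X$ (or by $\frac{1}{|G|}\dim X^{|G|}$) into one by $\frac{1}{|G|}\inf_m\frac{\dim X^m}{m}$ after the fact, because the infimum is in general approached only as $m\to\infty$. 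Likewise "replacing $|G|$ by $n|G|$ throughout" is not meaningful as stated, since the orbit size is fixed at $|G|$.

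The grouping idea you mention in passing is the correct repair, but it is the whole content of the lemma and must actually be carried out; and once you group, the compatible maps become unnecessary. The paper's proof does this in the simplest way: the coordinate of $\Psi(\varphi)$ attached to each $c\in\cC$ is just $\varphi(c)$ itself (and $\varphi(a)$ for $a\in\cZ$), so the target contains the single factor $X^{|\cC|}$, and the compatibility check still works because knowing $\varphi(c)$ exactly determines $s^{-1}\varphi(c)$, whose $\eta$-ball lies in a member of $\cU$ by the Lebesgue-number choice of $\eta$ (no refined cover $\cV$ is needed). Then one chooses $M$ with $\frac{\dim X^m}{m}\le(\lambda+\theta)|G|$ for $m\ge M$, where $\lambda=\frac{1}{|G|}\inf_m\frac{\dim X^m}{m}$, and takes $d$ sufficiently large so that $|\cC|\ge M$, giving $\dim X^{|\cC|}\le|\cC||G|(\lambda+\theta)\le d(\lambda+\theta)$; this is where the hypothesis "$d$ sufficiently large" enters, a step entirely absent from your write-up. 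If you prefer your block version (tuples of $n$ representatives mapped through a map compatible with the $n$-fold product of $\cV^G$, target of dimension $\le\dim X^n$, remainder block of size $<n$ negligible as $d\to\infty$), it can be made to work, but it is essentially the paper's argument with extra bookkeeping rather than a consequence of the bound you actually established.
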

\begin{proof} The proof is similar to that of Lemma~\ref{L-top mean dim upper bound}. Fix a compatible metric $\rho$ on $X$.
Set $\lambda=\frac{1}{|G|}\inf_{m\in \Nb}\frac{\dim(X^m)}{m}$.
Let $\cU$ be a finite open cover of $X$ and $\theta>0$. It suffices to show that $\cD_\Sigma(\cU)\le \lambda+3\theta$.

We can find $\eta>0$ such that for every $y\in X$, one has $\{x\in X: \rho(x, y)<\eta\}\subseteq U$ for some $U\in \cU$.

Take $M>0$ such that $\frac{\dim(X^m)}{m}\le (\lambda+\theta) |G|$ for all $m\ge M$.

Take $\tau>0$ with $\tau \dim(X)\le \theta$.
By Remark~\ref{R-Rokhlin}
for every map $\sigma : G\to\Sym(d)$ for some $d\in \Nb$ which is a good
enough sofic approximation for $G$  and every $\cW\subseteq [d]$ with $|\cW|\ge (1-\tau/2)d$ there exists  $\cC\subseteq \cW$
such that the map $(s, c)\mapsto \sigma_s(c)$ from $G\times \cC$ to $\sigma(G)\cC$ is bijective and
$|\sigma(G)\cC|\ge (1-\tau)d$.

Take $\kappa>0$ such that for any $x, y\in X$ with $\rho(x, y)\le \kappa$ one has $\rho(sx, sy)<\eta$ for all $s\in G$.
Take $\delta>0$ with $\delta^{1/2}<\kappa$ and $\delta |\cU| |G|\le \theta$.

Let $\sigma$ be a map from $G$ to $\Sym (d)$
for some $d\in\Nb$.
Now it suffices to show that if $\sigma$ is a good enough sofic approximation and $d$ is sufficiently large then
\begin{align*}
\frac{\cD(\cU, \rho, G, \delta, \sigma)}{d} \le \lambda + 3\theta .
\end{align*}

Denote by $\cW$ the subset of $[d]$ consisting of $a$ satisfying $\sigma_s\sigma_{s^{-1}}(a)=\sigma_{e_G}(a)=a$ for all $s\in G$.
Assuming that $\sigma$ is a good enough sofic approximation,  we have $|\cW|\ge (1-\tau/2)d$ and can find $\cC$ as above.
Set $\cZ=[d]\setminus \sigma(G)\cC$. Then $|\cZ|\le \tau d$.

For every $\varphi\in\Map (\rho, G, \delta, \sigma )$,
by Lemma~\ref{L-almost}
the set $\Lambda_{\varphi}$
of all $a\in [d]$ satisfying
\[
\rho(\varphi (sa), s\varphi(a)) \le \delta^{1/2}
\]
for all $s\in G$ has cardinality at least $(1-|G|\delta) d$.

Take a partition of unity $\{\zeta_U\}_{U\in \cU}$ for $X$ subordinate  to $\cU$. That is, each $\zeta_U$ is a continuous function $X\rightarrow [0, 1]$ with support contained in $U$, and
$$ \sum_{U\in \cU}\zeta_U=1.$$
Define a continuous map $\overrightarrow{\xi}: X \rightarrow [0, 1]^{\cU}$ by $\overrightarrow{\xi}(x)_U=\zeta_U(x)$ for $x\in X$ and $U
\in \cU$. Consider the continuous map $\overrightarrow{h}: \Map(\rho, G, \delta, \sigma) \rightarrow ([0, 1]^{\cU})^{[d]}$ defined by
$$\overrightarrow{h}(\varphi)_a=\overrightarrow{\xi}(\varphi(a))\max(\max_{s\in G}\rho(s \varphi(a), \varphi(sa))-\kappa, 0)$$
for $\varphi \in \Map(\rho, G, \delta, \sigma)$ and $a\in [d]$.
Denote by $\nu$ the point of $[0, 1]^{\cU}$ having all coordinates $0$.
Set $X_0$ to be the subset of  $([0, 1]^{\cU})^{[d]}$ consisting of elements whose coordinates are equal to $\nu$ at at least $(1-|G|\delta) d$ elements of $[d]$.
For each $\varphi\in \Map(\rho, G, \delta, \sigma)$, note that  $\overrightarrow{h}(\varphi)_a=\nu$ for all $a\in \Lambda_\varphi$ by our choice of $\delta$ and hence $\overrightarrow{h}(\varphi)\in X_0$. Thus we may think of $h$ as a map from $\Map(\rho, G, \delta, \sigma)$ into $X_0$. Since the union of finitely many closed subsets of dimension at most $m$ has dimension at most $m$ \cite[page 30]{HW} and $\delta>0$ was chosen such that $|\cU| |G|\delta\le \theta$, we get $\dim(X_0)\le |\cU| |G|\delta d\le \theta d$.

Now define a continuous map $\Psi: \Map(\rho, G, \delta, \sigma)\rightarrow X_0\times (\prod_{c\in \cC}X)\times (\prod_{a\in \cZ}X)$
as follows. For $\varphi\in \Map(\rho, G, \delta, \sigma)$, the coordinate of $\Psi(\varphi)$ in $X_0$ is $\overrightarrow{h}(\varphi)$, in $X$ for $c\in \cC$ is $\varphi(c)$, in $X$ for $a\in \cZ$ is $\varphi(a)$. We claim that $\Psi$ is $\cU^d|_{\Map(\rho, G, \delta, \sigma)}$-compatible.
Let $w\in X_0\times (\prod_{c\in \cC}X)\times (\prod_{a\in \cZ}X)$. We need to show that for each $a\in [d]$ there is some
$U\in \cU$ depending only on $w$ and $a$ such that $\varphi(a)\in U$ for every  $\varphi \in \Psi^{-1}(w)$.
We write the coordinates of $w$ in  $X_0$, $\prod_{c\in \cC}X$, and $\prod_{a\in \cZ}X$ as $w^1$, $w^2$, and $w^3$ respectively.

For each $a\in \cZ$, one has $w^3_a\in U_{w^3_a}$ for some $U_{w^3_a}\in \cU$. Then $\varphi(a)\in U_{w^3_a}$ for every
$\varphi \in \Psi^{-1}(w)$ and
$a\in \cZ$.

For every $a\in [d]\setminus \cZ$, we distinguish the two cases $w^1_a\neq \nu$ and $w^1_a=\nu$. If $w^1_a\neq \nu$, then $(w^1_a)_U\neq 0$ for some $U\in \cU$, and then for $\varphi \in \Psi^{-1}(w)$ one has $\zeta_U(\varphi(a))>0$ and hence  $\varphi(a)\in U$.
Suppose that $w^1_a=\nu$.  Say, $a=\sigma(s^{-1})c$ for some $s^{-1}\in G$ and $c\in \cC$. Then $\{x\in X: \rho(x, s^{-1}\varphi(c))<\eta\}\subseteq U$ for some $U\in \cU$. Let $\varphi \in \Psi^{-1}(w)$.
Since $c\in \cC\subseteq \cW$, one has $sa=\sigma_{s}\sigma_{s^{-1}}(c)=c$.
As $\{\zeta_U\}_{U\in \cU}$ is a partition of unity of $X$, $\overrightarrow{\xi}(\varphi(a))\neq \nu$. But $h(\varphi)_a=w^1_a=\nu$.
Thus $\max_{s'\in G}\rho(s' \varphi(a), \varphi(s'a))\le \kappa$. In particular,
one has
$\rho(s\varphi(a), \varphi(c))=\rho(s\varphi(a), \varphi(sa))\le \kappa$. From our choice of $\kappa$, one gets $\rho(\varphi(a), s^{-1}\varphi(c))<\eta$.
Thus $\varphi(a)\in U$.
This proves the claim.

From Lemma~\ref{L-compatible map} we get
$$ \cD(\cU, \rho, G, \delta, \sigma) \le \dim\big(X_0\times (\prod_{c\in \cC}X)\times (\prod_{a\in \cZ}X)\big).$$
Taking $d$ to be sufficiently large, we have $|\cC|\ge M$ and hence $\dim(X^{|\cC|})\le |\cC||G|(\lambda +\theta)$.
Since the dimension of the product of two compact metrizable spaces is at most the sum of the dimensions of the factors \cite[page 33 and Theorem V.8]{HW},  we have
\begin{align*}
 \dim\big(X_0\times (\prod_{c\in \cC}X)\times (\prod_{a\in \cZ}X)\big) &\le \dim(X_0)+\dim(X^{|\cC|})+|\cZ|\dim(X)\\
 &\le \theta d+|\cC||G|(\lambda +\theta) +\tau d \dim(X)\\
 &\le \theta d+d(\lambda+\theta) +\theta d\\
 &= d(\lambda+3\theta).
\end{align*}
Therefore $\cD(\cU, \rho, G, \delta, \sigma) \le d(\lambda+3\theta)$ as desired.
\end{proof}

\section{Sofic Metric Mean Dimension} \label{S-sofic metric mean dim}

In this section we define the sofic metric mean dimension and establish some basic properties for it.

We start with recalling the definitions of the lower box dimension for a compact metric space and the metric mean dimension for actions of countable amenable groups.

For a pseudometric space $(Y, \rho)$ and $\varepsilon>0$ we say a subset $Z$ of $Y$ is {\it $(\rho, \varepsilon)$-separated} if $\rho(y, z)\ge \varepsilon$ for all distinct $y, z\in Z$. Denote by $N_\varepsilon(Y, \rho)$ the maximal cardinality of $(\rho, \varepsilon)$-separated subsets of $Y$.

The {\it lower box dimension} of a compact metric space $(Y, \rho)$ is defined as
$$ \underline{\dim}_B(Y, \rho):=\varliminf_{\varepsilon\to 0}\frac{\log N_{\varepsilon}(Y, \rho)}{|\log \varepsilon|}.$$

Let a countable (discrete) amenable group $G$ act continuously on a compact metrizable space $X$. Let $\rho$ be a continuous pesudometric on $X$.
For a finite open cover $\cU$ of $X$, we define the mesh of $\cU$ under $\rho$ by
$$ \mesh(\cU, \rho)=\max_{U\in \cU}\diam(U, \rho).$$
For a nonempty finite subset $F$ of $G$, we define a pseudometric $\rho_F$ on $X$ by
$$ \rho_F(x, y)=\max_{s\in F}\rho(sx, sy)$$
for $x, y\in X$. The function $F\mapsto \log \min_{\mesh(\cU, \rho_F)<\varepsilon} |\cU|$ defined on the set of nonempty finite subsets of $G$ satisfies
the conditions of the Ornstein-Weiss lemma \cite{OW} \cite[Theorem 6.1]{LW}, thus $\min_{\mesh(\cU, \rho_F)<\varepsilon} \frac{\log |\cU|}{|F|}$ converges to some real number, denoted by $S(X, \varepsilon, \rho)$, as $F$ becomes more and more left invariant. The {\it metric mean  dimension of $X$ with respect to $\rho$} \cite[page 13]{LW} is defined as
$$\mdim_\rM(X, \rho)=\varliminf_{\varepsilon\to 0}\frac{S(X, \varepsilon, \rho)}{|\log \varepsilon|}.$$
For any nonempty finite subset $F$ of $G$ and $\varepsilon>0$, it is easy to check that
\begin{align} \label{E-separated vs spanning}
 \min_{\mesh(\cU, \rho_F)<\varepsilon}|\cU|\ge N_\varepsilon(X, \rho_F)\ge \min_{\mesh(\cU, \rho_F)<2\varepsilon}|\cU|.
\end{align}
As discussed on page 14 of \cite{LW}, using \eqref{E-separated vs spanning} one can also write $\mdim_\rM(X, \rho)$ in a way similar to $\underline{\dim}_B(Y, \rho)$:
$$ \mdim_\rM(X, \rho)=\varliminf_{\varepsilon\to 0}\frac{1}{|\log \varepsilon|}\varlimsup_{n\to \infty}\frac{\log N_\varepsilon(X, \rho_{F_n})}{|F_n|}$$
for any left F{\o}lner sequence $\{F_n\}_{n\in \Nb}$ of $G$ (see Definition~\ref{D-amenable}).
Also, define
$$ \mdim_\rM(X)=\inf_\rho\mdim_\rM(X, \rho)$$
for $\rho$ ranging over compatible metrics on $X$.

In the rest of this section we fix a countable sofic group $G$ and
a sofic approximation sequence $\Sigma = \{ \sigma_i : G \to \Sym (d_i ) \}_{i=1}^\infty$ for $G$.
We also fix  a continuous action $\alpha$ of $G$ on a compact metrizable space $X$.

As we discussed in Section~\ref{S-sofic mean topological dim}, when defining sofic invariants, we replace $X$ by $\Map(\rho, F, \delta, \sigma)$.
We also replace $(\rho_F, \varepsilon)$-separated subsets of $X$ by $(\rho_\infty, \varepsilon)$-separated subsets of $\Map(\rho, F, \delta, \sigma)$.
Though we are mainly interested in the sofic metric mean dimension with respect to a metric, sometimes it can be calculated using certain continuous pseudometric as Lemma~\ref{L-allow pseudometric} and  Section~\ref{S-shifts} indicate. Thus we start various definitions for continuous pseudometrics.

\begin{definition} \label{D-sofic metric mean dim}
Let $F$ be a nonempty finite subset of $G$ and
$\delta > 0$. For $\varepsilon > 0$ and $\rho$ a continuous pseudometric on $X$ we define
\begin{align*}
h_{\Sigma ,\infty}^\varepsilon (\rho ,F, \delta ) &=
\varlimsup_{i\to\infty} \frac{1}{d_i} \log N_\varepsilon (\Map (\rho ,F,\delta ,\sigma_i ),\rho_\infty ) ,\\
h_{\Sigma ,\infty}^\varepsilon (\rho ,F) &= \inf_{\delta > 0} h_{\Sigma ,\infty}^\varepsilon (\rho ,F,\delta ) ,\\
h_{\Sigma ,\infty}^\varepsilon (\rho ) &= \inf_{F} h_{\Sigma ,\infty}^\varepsilon (\rho ,F) ,
\end{align*}
where $F$ in the third line ranges over the nonempty finite subsets of $G$.
If $\Map (\rho ,F,\delta ,\sigma_i )$ is empty for all sufficiently large $i$, we set
$h_{\Sigma ,\infty}^\varepsilon (\rho ,F, \delta ) = -\infty$. We define the {\it sofic metric mean dimension of $\alpha$ with respect to
$\rho$}  as
$$\mdim_{\Sigma, \rM} (X, \rho ) = \varliminf_{\varepsilon\to 0} \frac{1}{|\log \varepsilon|}h_{\Sigma ,\infty}^\varepsilon (\rho ).$$
We also define
$$ \mdim_{\Sigma, \rM}(X) =\inf_{\rho} \mdim_{\Sigma, \rM} (X, \rho ),$$
for $\rho$ ranging over compatible metrics on $X$. Note that $\mdim_{\Sigma, \rM}(\cdot, \cdot)$ is an invariant of metric dynamical systems, while $\mdim_{\Sigma, \rM}(\cdot)$ is an invariant of topological dynamical system.
\end{definition}

\begin{remark} \label{R-mean metric dim1}
Note that $h_{\Sigma ,\infty}^\varepsilon (\rho ,F, \delta )$ decreases when $\delta$ decreases and $F$ increases. Thus in the definitions of $h_{\Sigma ,\infty}^\varepsilon (\rho ,F)$ and $h_{\Sigma ,\infty}^\varepsilon (\rho )$ one can also replace $\inf_{\delta>0}$ and $\inf_F$ by $\lim_{\delta\to 0}$ and $\lim_{F\to \infty}$ respectively, where $F_1\le F_2$ means  $F_1\subseteq F_2$.
\end{remark}

The {\it sofic topological entropy} $h_\Sigma(X)$ of $\alpha$ was defined in \cite[Definition 4.6]{KL11}. It was shown in Proposition 2.4 of
\cite{KerLi10amenable} that
$$ h_\Sigma(X)=\lim_{\varepsilon \to 0}h^\varepsilon_{\Sigma, \infty}(\rho)$$
for every compatible metric $\rho$ on $X$. Thus we have

\begin{proposition} \label{P-entropy vs dim}
If $h_\Sigma(X)<+\infty$, then $\mdim_{\Sigma, \rM}(X, \rho)\le 0$ for every compatible metric $\rho$ on $X$.
\end{proposition}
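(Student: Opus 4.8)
The plan is to deduce the inequality directly from the two facts recalled immediately above the statement, namely the identity $h_\Sigma(X)=\lim_{\varepsilon\to 0}h^\varepsilon_{\Sigma,\infty}(\rho)$ from \cite[Proposition 2.4]{KerLi10amenable} and the definition $\mdim_{\Sigma,\rM}(X,\rho)=\varliminf_{\varepsilon\to 0}\frac{1}{|\log\varepsilon|}h^\varepsilon_{\Sigma,\infty}(\rho)$, together with a monotonicity observation. The point is that finiteness of $h_\Sigma(X)$ provides a bound on $h^\varepsilon_{\Sigma,\infty}(\rho)$ that is uniform in $\varepsilon$, after which dividing by $|\log\varepsilon|$ and letting $\varepsilon\to 0$ kills it.

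First I would record that $\varepsilon\mapsto h^\varepsilon_{\Sigma,\infty}(\rho)$ is nonincreasing in $\varepsilon$: if $0<\varepsilon_1\le\varepsilon_2$, then every $(\rho_\infty,\varepsilon_2)$-separated subset of $\Map(\rho,F,\delta,\sigma_i)$ is also $(\rho_\infty,\varepsilon_1)$-separated, so $N_{\varepsilon_1}(\Map(\rho,F,\delta,\sigma_i),\rho_\infty)\ge N_{\varepsilon_2}(\Map(\rho,F,\delta,\sigma_i),\rho_\infty)$; this inequality passes through $\varlimsup_{i\to\infty}\frac{1}{d_i}\log(\cdot)$ and then through $\inf_{\delta>0}$ and $\inf_F$, giving $h^{\varepsilon_1}_{\Sigma,\infty}(\rho)\ge h^{\varepsilon_2}_{\Sigma,\infty}(\rho)$. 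Consequently, for every $\varepsilon>0$ one has
\[
h^\varepsilon_{\Sigma,\infty}(\rho)\le\lim_{\varepsilon'\to 0}h^{\varepsilon'}_{\Sigma,\infty}(\rho)=h_\Sigma(X)<+\infty.
\]

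Next I would split into two cases according to whether $\Map(\rho,F,\delta,\sigma_i)$ is nonempty for infinitely many $i$ (for all nonempty finite $F\subseteq G$ and all $\delta>0$) or not. In the degenerate case $h^\varepsilon_{\Sigma,\infty}(\rho)=-\infty$ for some $\varepsilon>0$, hence, by the monotonicity above, for all smaller $\varepsilon$ as well, so $\mdim_{\Sigma,\rM}(X,\rho)=-\infty\le 0$. Otherwise $N_\varepsilon(\Map(\rho,F,\delta,\sigma_i),\rho_\infty)\ge 1$ for infinitely many $i$, so $0\le h^\varepsilon_{\Sigma,\infty}(\rho)\le h_\Sigma(X)$ for every $\varepsilon>0$, and therefore
\[
0\le\frac{1}{|\log\varepsilon|}h^\varepsilon_{\Sigma,\infty}(\rho)\le\frac{h_\Sigma(X)}{|\log\varepsilon|}.
\]
Letting $\varepsilon\to 0$, so that $|\log\varepsilon|\to+\infty$, the right-hand side tends to $0$; taking $\varliminf$ then yields $\mdim_{\Sigma,\rM}(X,\rho)\le 0$ (indeed $=0$ in this case).

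There is no genuine obstacle here; the only substantive ingredient is the $\varepsilon$-monotonicity of $h^\varepsilon_{\Sigma,\infty}(\rho)$, which combined with the hypothesis $h_\Sigma(X)<+\infty$ caps $h^\varepsilon_{\Sigma,\infty}(\rho)$ by a constant independent of $\varepsilon$, and then the factor $\tfrac{1}{|\log\varepsilon|}\to 0$ does the rest.
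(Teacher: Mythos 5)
Your proposal is correct and is essentially the paper's own argument, which treats the proposition as an immediate consequence of the identity $h_\Sigma(X)=\lim_{\varepsilon\to 0}h^\varepsilon_{\Sigma,\infty}(\rho)$ from \cite[Proposition 2.4]{KerLi10amenable} together with the definition of $\mdim_{\Sigma,\rM}(X,\rho)$; your monotonicity observation just makes the bound $h^\varepsilon_{\Sigma,\infty}(\rho)\le h_\Sigma(X)$ explicit. One small quibble: in the degenerate case your monotonicity is invoked in the wrong direction (it gives $-\infty$ for \emph{larger} $\varepsilon$, not smaller), but this is harmless since the bound $\frac{1}{|\log\varepsilon|}h^\varepsilon_{\Sigma,\infty}(\rho)\le\frac{\max(h_\Sigma(X),0)}{|\log\varepsilon|}$ holds for all small $\varepsilon$ regardless and already yields the conclusion.
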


The amenable group case of Proposition~\ref{P-entropy vs dim} was observed by Lindenstrauss and Weiss \cite[page 14]{LW}.

We say that a continuous pseudometric $\rho$ on $X$ is {\it dynamically generating} \cite[Sect.\ 4]{Li} if
for any distinct points $x,y\in X$ one has $\rho(sx, sy)>0$ for some $s\in G$. The next lemma says that for any dynamically generating continuous pseudometric
$\rho$, one can construct a compatible metric $\rho'$ on $X$ such that $\mdim_{\Sigma, \rM} (X, \rho )=\mdim_{\Sigma, \rM}(X, \rho')$. This is the analogue of the Kolmogorov-Sinai theorem that for measure-preserving actions of $\Zb$ the entropy of any generating finite partition is equal to the entropy of the action. In some examples $X$ is naturally a closed invariant subset of $Y^G$ equipped with the shift action of $G$ for some compact metrizable space $Y$ which is much simpler than $X$. In such case one can take a compatible metric $\rho$ on $Y$ and think of it as a dynamically generating continuous pseudometric on $X$ via the coordinate map $X\rightarrow Y$ at $e_G$. Then the calculation using $\rho$ is much simpler than that using $\rho'$ (see for example Section~\ref{S-shifts} and \cite{BL, KL11, KerLi10amenable}).

\begin{lemma} \label{L-allow pseudometric}
Let $\rho$ be a dynamically generating continuous pseudometric on $X$. Enumerate the elements of $G$ as $s_1, s_2, \dots$.
Define $\rho'$ by $\rho'(x, y)=\sum_{n=1}^{\infty}\frac{1}{2^n}\rho(s_nx, s_ny)$ for all $x, y\in X$. Then $\rho'$ is a compatible metric on $X$.
Furthermore, if $e_G=s_m$, then for any $\varepsilon>0$ one has
$$ h_{\Sigma ,\infty}^{4\varepsilon} (\rho' )\le h_{\Sigma ,\infty}^\varepsilon (\rho )\le h_{\Sigma ,\infty}^{\varepsilon/2^m} (\rho' ).$$
In particular,
$\mdim_{\Sigma, \rM} (X, \rho )=\mdim_{\Sigma, \rM}(X, \rho')$.
\end{lemma}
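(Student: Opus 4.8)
The plan is to break the statement into three parts: first that $\rho'$ is a compatible metric, second the two-sided inequality relating the $\varepsilon$-versions of $h_{\Sigma,\infty}$, and third the conclusion about $\mdim_{\Sigma,\rM}$.

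For the first part, observe that $\rho'$ is a pseudometric as a uniformly convergent sum (each term is bounded by $\frac{1}{2^n}\diam(X,\rho)$, so no convergence issue) of continuous pseudometrics $(x,y)\mapsto \rho(s_n x, s_n y)$. It separates points precisely because $\rho$ is dynamically generating: if $x\neq y$ then $\rho(s_n x, s_n y)>0$ for some $n$, so $\rho'(x,y)>0$. Continuity and the fact that $\rho'$ induces the topology of $X$ follow since $\rho'$ is continuous on the compact space $X$ and separates points, hence (by the standard fact that a continuous metric on a compact space is compatible) it is a compatible metric.

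For the second part, the key observation is that the map spaces are essentially interchangeable up to a controlled change of parameters. Concretely: for any $F$ and $\delta$, I would show that a suitable choice of $F'\supseteq F$ (a large finite set, depending on how many terms of the series one needs to control) and $\delta'$ gives $\Map(\rho', F, \delta', \sigma)\subseteq \Map(\rho, F', \delta'', \sigma)$ and conversely $\Map(\rho, F', \delta', \sigma)\subseteq \Map(\rho', F, \delta'', \sigma)$, in the style of Lemma~\ref{L-sofic top mean dim independent of metric} and Proposition~\ref{P-product}; here one uses that $\rho' \le \rho_{F'} \cdot (\text{const}) + 2^{-N}\diam$ roughly speaking, i.e. $\rho'$ is comparable to a $\rho_{F'}$-type pseudometric up to a tail of size $2^{-N}$. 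Simultaneously, one compares the $\rho_\infty$-separation: if $\varphi,\psi:[d]\to X$ are $\rho'_\infty$-separated by $4\varepsilon$, then since $\rho'(x,y)\ge \frac{1}{2^m}\rho(s_m x, s_m y) = \frac{1}{2^m}\rho(x,y)$ (using $s_m = e_G$), the precomposed maps... more directly: $\rho(\varphi(a),\psi(a)) \le 2^m \rho'(\varphi(a),\psi(a))$ always, so $\rho_\infty$-separation is weaker than $\rho'_\infty$-separation up to the factor $2^m$, giving $N_\varepsilon(\cdot,\rho_\infty)$-type bounds. The reverse direction uses that on $\Map$-spaces, the extra coordinates $\rho(s_n\varphi(a),s_n\psi(a))$ are approximately $\rho(\varphi(s_n a),\psi(s_n a))$ (equivariance up to $\delta$), so $\rho'_\infty$ separation of $\varphi\circ(\text{something})$ can be deduced from $\rho_\infty$ separation of $\varphi$; this is where one spends a factor like $2$ and why the claimed bound has $4\varepsilon$ on the left and $\varepsilon/2^m$ on the right rather than clean constants. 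Assembling these, passing to $\varlimsup_{i\to\infty}\frac{1}{d_i}\log N$, then $\inf_\delta$, then $\inf_F$, yields $h_{\Sigma,\infty}^{4\varepsilon}(\rho') \le h_{\Sigma,\infty}^\varepsilon(\rho) \le h_{\Sigma,\infty}^{\varepsilon/2^m}(\rho')$.

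For the third part, divide the chain of inequalities by $|\log\varepsilon|$ and take $\varliminf_{\varepsilon\to 0}$. Since $|\log 4\varepsilon| = |\log\varepsilon| + \log 4$ and $|\log(\varepsilon/2^m)| = |\log\varepsilon| + m\log 2$ are both asymptotically equal to $|\log\varepsilon|$ as $\varepsilon\to 0$, the constant shifts $4$ and $2^m$ wash out in the limit, and we get $\mdim_{\Sigma,\rM}(X,\rho) = \mdim_{\Sigma,\rM}(X,\rho')$. The main obstacle I anticipate is the bookkeeping in the second part: carefully choosing $F'$ and the $\delta$'s so that the mutual containments of map spaces hold, and simultaneously tracking how $\rho_\infty$-separated sets transform under these containments—in particular handling the tail $\sum_{n>N}2^{-n}\rho(s_n\cdot,s_n\cdot)$ uniformly and making sure the approximate-equivariance errors stay below the separation scale. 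The rest is routine limit manipulation.
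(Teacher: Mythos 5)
Your first and third parts are fine (the paper argues them the same way), and your easy direction $h_{\Sigma,\infty}^\varepsilon(\rho)\le h_{\Sigma,\infty}^{\varepsilon/2^m}(\rho')$ is essentially the paper's: the containment $\Map(\rho,\bigcup_{n=1}^k s_nF,\delta',\sigma)\subseteq\Map(\rho',F,\delta,\sigma)$ for good $\sigma$, together with $\rho\le 2^m\rho'$. (Note, though, that your other stated containment $\Map(\rho',F,\delta',\sigma)\subseteq\Map(\rho,F',\delta'',\sigma)$ with $F'\supseteq F$ is false as written: approximate equivariance over $F$ never yields approximate equivariance over a strictly larger set. The correct containment in that direction keeps the same $F$ and only uses $\rho'\ge 2^{-m}\rho$, i.e. $\Map(\rho',F,\delta/2^m,\sigma)\subseteq\Map(\rho,F,\delta,\sigma)$.)

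The genuine gap is in the hard direction $h_{\Sigma,\infty}^{4\varepsilon}(\rho')\le h_{\Sigma,\infty}^\varepsilon(\rho)$. You claim that $\rho'_\infty$-separation can be deduced from $\rho_\infty$-separation because $\rho(s_n\varphi(a),s_n\psi(a))$ is ``approximately'' $\rho(\varphi(s_na),\psi(s_na))$ by equivariance up to $\delta$. But the equivariance in the definition of $\Map$ is only an averaged ($\rho_2$) condition: it gives $\rho(s\varphi(a),\varphi(sa))\le\sqrt\delta$ only outside an exceptional set of size up to $|F|\delta d$, which moreover depends on $\varphi$. Since $\rho$ is merely a pseudometric there is no uniform-continuity substitute (e.g.\ for a shift with $\rho$ reading one coordinate, $\rho(x,y)$ small says nothing about $\rho(sx,sy)$), so on the bad set you have no comparison at all; and since $\rho'_\infty$ and $\rho_\infty$ are sup-metrics over $[d]$, two maps can be $4\varepsilon$-separated in $\rho'_\infty$ solely because of what they do on their bad sets while being $\rho_\infty$-close. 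Hence the inequality between separation numbers does not follow from your sketch. The paper closes exactly this hole with two counting steps whose cost is at most $e^{\theta d}$ for $\delta$ small: first, by Stirling, pass to a subset $\sF$ of a maximal $(\rho'_\infty,4\varepsilon)$-separated set on which the exceptional set $\cW_\varphi$ is a fixed $\cW$, losing a factor $e^{\beta d}$; second, inside each $\rho_\infty$-ball of radius $\varepsilon$, pigeonhole the restrictions to $\cW^c$ over a maximal $(\rho',2\varepsilon)$-separated set $Y$ of $X$, showing at most $|Y|^{|F|\delta d}$ elements of $\sF$ per ball (any two with the same $Y$-pattern would be $\rho'_\infty$-within $4\varepsilon$, contradicting separation). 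These subexponential losses are then absorbed because the statement is proved up to an arbitrary $\theta>0$ with $\delta$ chosen after $\theta$, which is where the infimum over $\delta$ in the definition is actually used; without some argument of this kind the key step of your plan fails.
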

\begin{proof} Clearly $\rho'$ is a continuous pseudometric on $X$. Since $\rho$ is dynamically generating, $\rho'$ separates the points of $X$. Thus $\rho'$ is a compatible metric on $X$. Let $\varepsilon>0$.

We show first
$h_{\Sigma ,\infty}^\varepsilon (\rho )\le h_{\Sigma ,\infty}^{\varepsilon/2^m} (\rho' )$.
Let $F$ be a finite subset of $G$ containing $e_G$ and $\delta>0$. Take $k\in \Nb$ with
$2^{-k}\diam(X, \rho)<\delta/2$. Set $F'=\bigcup_{n=1}^ks_nF$ and take $1>\delta'>0$ to be small which we shall fix in a moment.

Let $\sigma$ be a map from $G$ to  $\Sym(d)$ for some $d\in \Nb$ which is a good enough sofic approximation for $G$.
We claim that $\Map (\rho ,F',\delta' ,\sigma )\subseteq \Map (\rho' ,F,\delta ,\sigma)$.
Let $\varphi\in \Map (\rho ,F',\delta' ,\sigma )$.
By Lemma~\ref{L-almost} one has
$$|\cW|\ge (1-\delta'|F'|)d,$$
for
$$\cW:=\{a\in [d]: \max_{s\in F'}\rho(\varphi\circ \sigma_{s}(a), \alpha_s\circ \varphi(a))\le \sqrt{\delta'}\}.$$
Set $\cR=\cW\cap \bigcap_{t\in F}\sigma_{t}^{-1}(\cW)$. Then $|\cR|\ge (1-\delta'|F'|(1+|F|))d$.
Also set
$$\cQ=\{a\in [d]: \sigma_{s_n}\circ \sigma_{t}(a)=\sigma_{s_nt}(a) \mbox{ for all } 1\le n \le k \mbox{ and } t\in F \}.$$
For any $a\in \cR\cap \cQ$ and $t\in F$, since $a, \sigma_{t}(a)\in \cW$ and $s_n, s_nt\in F'$ for all $1\le n\le k$, we have
\begin{eqnarray*}
& &\rho'(\varphi \circ \sigma_{t}(a), \alpha_t\circ \varphi(a))\\
&\le& 2^{-k}\diam(X, \rho)+\sum_{n=1}^k\frac{1}{2^n}\rho(\alpha_{s_n}\circ \varphi \circ \sigma_{t}(a), \alpha_{s_n}\circ \alpha_t\circ \varphi(a))\\
&\le& \delta/2+\sum_{n=1}^k \frac{1}{2^n}\bigg(\rho(\alpha_{s_n}\circ \varphi \circ \sigma_{t}(a), \varphi\circ \sigma_{s_n}\circ \sigma_{t}(a))+\rho(\varphi\circ \sigma_{s_nt}(a), \alpha_{s_nt}\circ \varphi(a))\bigg)\\
&\le& \delta/2+\sum_{n=1}^k \frac{1}{2^n}\cdot 2\sqrt{\delta'}\le \delta/2+2\sqrt{\delta'}.
\end{eqnarray*}
When $\sigma$ is a good enough sofic approximation for $G$, one has $|\cQ|\ge (1-\delta'|F'|)d$ and hence for any $t\in F$,
\begin{eqnarray*}
(\rho'_2(\varphi \circ \sigma_{t}, \alpha_t\circ \varphi))^2&\le& \frac{1}{d}(|\cR\cap \cQ|(\delta/2+2\sqrt{\delta'})^2+(d-|\cR\cap \cQ|)(\diam(X, \rho'))^2)\\
&\le& (\delta/2+2\sqrt{\delta'})^2+\delta'|F'|(2+|F|)(\diam(X, \rho'))^2<\delta^2,
\end{eqnarray*}
when $\delta'$ is small enough independent of $\sigma$ and $\varphi$. Therefore $\varphi\in \Map (\rho' ,F,\delta ,\sigma)$. This proves the claim.

Note that $\frac{1}{2^m}\rho_\infty\le \rho'_\infty$ on $\Map (\rho ,F',\delta' ,\sigma )$. Thus
$$N_\varepsilon(\Map (\rho ,F',\delta' ,\sigma ), \rho_\infty)\le N_{\varepsilon/2^m}(\Map (\rho ,F',\delta' ,\sigma ), \rho'_\infty)
\le N_{\varepsilon/2^m}(\Map (\rho' ,F,\delta ,\sigma ), \rho'_\infty),$$
when $\sigma$ is a good enough sofic approximation for $G$.
It follows that $h_{\Sigma ,\infty}^\varepsilon (\rho ,F', \delta' )\le h_{\Sigma ,\infty}^{\varepsilon/2^m} (\rho' ,F, \delta )$, and
hence $ h_{\Sigma ,\infty}^\varepsilon (\rho )\le h_{\Sigma ,\infty}^{\varepsilon/2^m} (\rho' )$ as desired.

Next we show
$h_{\Sigma ,\infty}^{4\varepsilon} (\rho' )\le h_{\Sigma ,\infty}^\varepsilon (\rho )$.
It suffices to show $h_{\Sigma ,\infty}^{4\varepsilon} (\rho' )\le h_{\Sigma ,\infty}^\varepsilon (\rho )+\theta$ for every $\theta>0$.
Take $k\in \Nb$ with
$2^{-k}\diam(X, \rho)<\varepsilon/2$.

Let $F$ be a finite subset of $G$ containing $\{s_1, \dots, s_k\}$ and $\delta>0$ be sufficiently small  which we shall specify in a moment.
Set $\delta'=\delta/2^m$.
Let $\sigma$ be a map from $G$ to $\Sym(d)$ for some sufficiently large $d\in \Nb$.
Note that $\frac{1}{2^m}\rho_2(\varphi, \psi)\le \rho'_2(\varphi, \psi)$ for all maps $\varphi, \psi: [d]\rightarrow X$.
Thus $\Map (\rho ,F,\delta ,\sigma )\supseteq \Map (\rho' ,F,\delta' ,\sigma )$.

Let $\sE$ be a $(\rho'_\infty, 4\varepsilon)$-separated subset of $\Map (\rho' ,F,\delta' ,\sigma )$ with $|\sE|=N_{4\varepsilon}(\Map (\rho' ,F,\delta' ,\sigma), \rho'_\infty)$. For each $\varphi\in \sE$ denote by $\cW_\varphi$ the set of $a\in [d]$ satisfying
$\rho(\alpha_s\circ \varphi(a), \varphi\circ \sigma_s(a))\le \sqrt{\delta}$ for all $s\in F$. By Lemma~\ref{L-almost} one has $|\cW_\varphi|\ge (1-|F|\delta)d$.
Take $\delta$ to be small enough so that $|F|\delta<1/2$.
Stirling's approximation says that $\frac{t!}{(2\pi t)^{1/2}(t/e)^t)}\to 1$ as $t\to +\infty$.
The number of subsets of $[d]$ of cardinality at most $|F|\delta d$ is equal to $\sum_{j=0}^{\lfloor |F|\delta d\rfloor}\binom{d}{j}$,
which is at most $|F|\delta d\binom{d}{|F|\delta d}$ (here we use $|F|\delta<1/2$), which by Stirling's approximation is less than $\exp(\beta d)$
for some $\beta > 0$ depending on $\delta$ and $|F|$
but not on $d$ when $d$ is sufficiently large with $\beta\to 0$ as $\delta\to 0$ for a fixed $|F|$.
Take $\delta$ to be small enough such that $\beta<\theta/2$. Then, when $d$ is sufficiently large, we can find a subset
$\sF$ of $\sE$ with $|\sF|\exp(\beta d)\ge |\sE|$ such that $\cW_\varphi$ is the same, say $\cW$, for every $\varphi \in \sF$.

Let $\varphi \in \sF$. Let us estimate how many elements in $\sF$ are in the open ball $B_\varphi:=\{\psi\in X^{[d]}: \rho_\infty(\varphi, \psi)<\varepsilon\}$. Let $\psi\in \sF\cap B_\varphi$. For any $a\in \cW$ and $s\in F$, we have
\begin{eqnarray*}
\rho(s\varphi(a), s\psi(a))&\le& \rho(s\varphi(a), \varphi(sa))+\rho(\varphi(sa), \psi(sa))+\rho(\psi(sa), s\psi(a))\\
&\le & \sqrt{\delta}+\varepsilon+\sqrt{\delta}\le \frac{3}{2}\varepsilon,
\end{eqnarray*}
when $\delta$ is taken to be small enough.
It follows that for any $a\in \cW$ we have
\begin{eqnarray*}
 \rho'(\varphi(a), \psi(a))&\le& 2^{-k}\diam(X, \rho)+\sum_{n=1}^k2^{-n}\rho(s_n\varphi(a), s_n\psi(a))\\
 &<& \frac{1}{2}\varepsilon+\frac{3}{2}\varepsilon=2\varepsilon.
\end{eqnarray*}
Then $\rho'_\infty(\varphi|_{\cW}, \psi|_{\cW})< 2\varepsilon$.

Let $Y$ be a maximal $(\rho', 2\varepsilon)$-separated subset of $X$.
Set $\cW^c=[d]\setminus \cW$.
For each $\psi\in \sF\cap B_\varphi$, there exists some $f_\psi\in Y^{\cW^c}$ with
$\rho'_\infty(\psi|_{\cW^c}, f_\psi)<2\varepsilon$. Then we can find a subset $\sA$ of $\sF\cap B_\varphi$ with $|Y|^{|\cW^c|}|\sA|\ge |\sF\cap B_\varphi|$ such that
$f_\psi$ is the same, say $f$, for every $\psi\in \sA$. For any $\psi, \psi'\in \sA$, we have
$$ \rho'_{\infty}(\psi|_{\cW^c}, \psi'|_{\cW^c})\le \rho'_\infty(\psi|_{\cW^c}, f)+\rho'_\infty(f, \psi'|_{\cW^c})<4\varepsilon,$$
and
$$ \rho'_{\infty}(\psi|_{\cW}, \psi'|_{\cW})\le \rho'_\infty(\psi|_{\cW}, \varphi|_\cW)+\rho'_\infty(\varphi|_\cW, \psi'|_{\cW})<4\varepsilon,$$
and hence $\rho'_\infty(\psi, \psi')< 4\varepsilon$.  Since $\sA$ is $(\rho'_\infty, 4\varepsilon)$-separated, we get $\psi=\psi'$.
Therefore $|\sA|\le 1$, and hence
\begin{align*}
|\sF\cap B_\varphi|\le |Y|^{|\cW^c|}|\sA|\le |Y|^{|F|\delta d}.
\end{align*}

Let $\sB$ be a maximal $(\rho_\infty, \varepsilon)$-separated subset of $\sF$. Then $\sF=\bigcup_{\varphi \in \sB}(\sF\cap B_\varphi)$.
Thus
\begin{align*}
N_{4\varepsilon}(\Map (\rho' ,F,\delta' ,\sigma ), \rho'_\infty)&=|\sE|\le \exp(\beta d)|\sF|\le \exp(\theta d/2)|\sB||Y|^{|F|\delta d}\\
&\le \exp(\theta d/2)|Y|^{|F|\delta d} N_\varepsilon(\Map (\rho, F, \delta, \sigma), \rho_\infty)\\
&\le \exp(\theta d)N_\varepsilon(\Map (\rho, F, \delta, \sigma), \rho_\infty)
\end{align*}
when we take $\delta$ to be small enough, where in the third inequality we used $\sB\subseteq \Map (\rho' ,F,\delta' ,\sigma )\subseteq \Map (\rho, F, \delta, \sigma)$ to conclude $|\sB|\le N_\varepsilon(\Map (\rho, F, \delta, \sigma), \rho_\infty)$. Therefore $h_{\Sigma ,\infty}^{4\varepsilon} (\rho' ,F, \delta' )\le h_{\Sigma ,\infty}^\varepsilon (\rho ,F, \delta )+\theta$. It follows that $h_{\Sigma ,\infty}^{4\varepsilon} (\rho' )\le h_{\Sigma ,\infty}^\varepsilon (\rho )+\theta$  as desired.
\end{proof}

Note that $\mdim_{\Sigma, \rM} (X)$ was defined as the infimum over all compatible metrics $\rho$ of $ \mdim_{\Sigma, \rM}(X, \rho)$. However
by Lemma~\ref{L-allow pseudometric} we get:

\begin{proposition} \label{P-allow pseudometric}
One has
\[
\mdim_{\Sigma, \rM} (X) = \inf_{\rho}\mdim_{\Sigma, \rM}(X, \rho)
\]
for $\rho$ ranging over dynamically generating continuous pseudometrics on $X$.
\end{proposition}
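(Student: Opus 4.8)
The plan is to obtain this as an immediate consequence of Lemma~\ref{L-allow pseudometric} together with the definition of $\mdim_{\Sigma, \rM}(X)$ as the infimum of $\mdim_{\Sigma, \rM}(X, \rho)$ over \emph{compatible metrics} $\rho$ on $X$. Write $\cP$ for the collection of all dynamically generating continuous pseudometrics on $X$. The first thing I would record is that every compatible metric $\rho$ lies in $\cP$: such a $\rho$ is a continuous pseudometric, and since $e_G \in G$ and $\rho$ separates the points of $X$, for any distinct $x, y \in X$ we have $\rho(e_G x, e_G y) = \rho(x, y) > 0$. Hence the infimum over $\cP$ is taken over a larger family of pseudometrics, giving
$$ \inf_{\rho \in \cP} \mdim_{\Sigma, \rM}(X, \rho) \;\le\; \mdim_{\Sigma, \rM}(X).$$

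For the reverse inequality I would fix an arbitrary $\rho \in \cP$, enumerate $G$ as $s_1, s_2, \dots$, and pass to the metric $\rho'(x,y) = \sum_{n \ge 1} 2^{-n}\rho(s_n x, s_n y)$ of Lemma~\ref{L-allow pseudometric}. That lemma guarantees that $\rho'$ is a compatible metric on $X$ and that $\mdim_{\Sigma, \rM}(X, \rho') = \mdim_{\Sigma, \rM}(X, \rho)$. Since $\rho'$ is one of the metrics over which the infimum defining $\mdim_{\Sigma, \rM}(X)$ is taken, we get $\mdim_{\Sigma, \rM}(X) \le \mdim_{\Sigma, \rM}(X, \rho') = \mdim_{\Sigma, \rM}(X, \rho)$; taking the infimum over $\rho \in \cP$ then yields $\mdim_{\Sigma, \rM}(X) \le \inf_{\rho \in \cP} \mdim_{\Sigma, \rM}(X, \rho)$. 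Combining the two inequalities proves the proposition.

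I do not anticipate any real obstacle at this stage: all the substantive analysis — the construction of $\rho'$ and, more importantly, the two-sided comparison of the counting functions $N_\varepsilon(\Map(\cdot, F, \delta, \sigma_i), (\cdot)_\infty)$ for $\rho$ and $\rho'$ via good enough sofic approximations — has already been carried out in the proof of Lemma~\ref{L-allow pseudometric}. The only point meriting an explicit sentence is the (easy) verification that a compatible metric is itself a dynamically generating continuous pseudometric, which uses that $e_G$ is an element of $G$.
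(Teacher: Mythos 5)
Your proposal is correct and is exactly the paper's argument: the paper deduces Proposition~\ref{P-allow pseudometric} directly from Lemma~\ref{L-allow pseudometric}, and your write-up simply makes explicit the two routine inequalities (compatible metrics are dynamically generating continuous pseudometrics via $s=e_G$, and any dynamically generating $\rho$ is matched by the compatible metric $\rho'$ with equal sofic mean metric dimension). Nothing is missing.
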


The following is the analogue of Proposition~\ref{P-subspace top}.

\begin{proposition} \label{P-subspace metric}
Let $G$ act continuously on a compact metrizable space $X$. Let $Y$ be a closed $G$-invariant subset of $X$. Then $\mdim_{\Sigma, \rM}(Y)\le \mdim_{\Sigma, \rM}(X)$.
\end{proposition}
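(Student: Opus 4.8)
The plan is to follow the proof of Proposition~\ref{P-subspace top} essentially verbatim, with separated sets and $N_\varepsilon$ playing the roles of open covers and $\cD$. Fix a compatible metric $\rho$ on $X$; it restricts to a compatible metric $\rho'$ on $Y$. Since $\mdim_{\Sigma, \rM}(Y)$ is an infimum over \emph{all} compatible metrics on $Y$, and the restrictions $\rho|_Y$ form a subfamily of these, it suffices to prove $\mdim_{\Sigma, \rM}(Y, \rho') \le \mdim_{\Sigma, \rM}(X, \rho)$ and then take the infimum over compatible metrics $\rho$ on $X$.

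The key point is that for any nonempty finite subset $F$ of $G$, any $\delta > 0$, and any map $\sigma$ from $G$ to $\Sym(d)$ for some $d \in \Nb$, one has $\Map(\rho', F, \delta, \sigma) \subseteq \Map(\rho, F, \delta, \sigma)$: a map $\varphi : [d] \to Y$ is in particular a map $[d] \to X$, and since $\rho'$ is the restriction of $\rho$, the defining inequalities $\rho_2(\varphi \circ \sigma_s, \alpha_s \circ \varphi) \le \delta$ are literally the same. Moreover the pseudometric $\rho_\infty$ built from $\rho'$ on maps $[d] \to Y$ is the restriction of the pseudometric $\rho_\infty$ built from $\rho$ on maps $[d] \to X$. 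Hence every $(\rho_\infty, \varepsilon)$-separated subset of $\Map(\rho', F, \delta, \sigma)$ is a $(\rho_\infty, \varepsilon)$-separated subset of $\Map(\rho, F, \delta, \sigma)$, so that $N_\varepsilon(\Map(\rho', F, \delta, \sigma), \rho_\infty) \le N_\varepsilon(\Map(\rho, F, \delta, \sigma), \rho_\infty)$; the convention assigning $-\infty$ when the source space is eventually empty only helps here.

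Taking $\varlimsup_{i \to \infty} \frac{1}{d_i} \log(\cdot)$ along $\Sigma$ gives $h_{\Sigma, \infty}^\varepsilon(\rho', F, \delta) \le h_{\Sigma, \infty}^\varepsilon(\rho, F, \delta)$; taking $\inf_{\delta > 0}$ and then $\inf_F$ over nonempty finite $F \subseteq G$ yields $h_{\Sigma, \infty}^\varepsilon(\rho') \le h_{\Sigma, \infty}^\varepsilon(\rho)$ for every $\varepsilon > 0$; finally multiplying by $\frac{1}{|\log \varepsilon|}$ and taking $\varliminf_{\varepsilon \to 0}$ gives $\mdim_{\Sigma, \rM}(Y, \rho') \le \mdim_{\Sigma, \rM}(X, \rho)$. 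Passing to the infimum over $\rho$ then produces $\mdim_{\Sigma, \rM}(Y) \le \inf_\rho \mdim_{\Sigma, \rM}(Y, \rho|_Y) \le \inf_\rho \mdim_{\Sigma, \rM}(X, \rho) = \mdim_{\Sigma, \rM}(X)$. No metric change or counting estimate is needed, so there is essentially no obstacle beyond bookkeeping; the only point requiring a little care is the order of the infima in the last step, which is settled precisely by the observation that each $\rho|_Y$ is an admissible compatible metric on $Y$.
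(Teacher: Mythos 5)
Your proposal is correct and follows essentially the same argument as the paper: the containment $\Map(\rho', F, \delta, \sigma) \subseteq \Map(\rho, F, \delta, \sigma)$ together with $\rho'_\infty$ being the restriction of $\rho_\infty$ gives the inequality on $N_\varepsilon$, and passing through the limits and then the infimum over compatible metrics on $X$ (using that each restriction $\rho|_Y$ is admissible for $Y$) yields the claim exactly as in the paper's proof.
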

\begin{proof}
Let $\rho$ be a compatible metric on $X$. Then $\rho$ restricts to a compatible metric $\rho'$ on $Y$.

Note that $\Map(\rho', F, \delta, \sigma)\subseteq \Map(\rho, F, \delta, \sigma)$ for any nonempty finite subset $F$ of $G$, any $\delta>0$, and any
map $\sigma$ from $G$ to $\Sym(d)$ for some $d\in \Nb$. Furthermore, the restriction of $\rho_\infty$ on $\Map(\rho', F, \delta, \sigma)$ is exactly $\rho'_\infty$. Thus $N_\varepsilon(\Map(\rho', F, \delta, \sigma), \rho'_\infty)\le N_\varepsilon(\Map(\rho, F, \delta, \sigma), \rho_\infty)$ for any $\varepsilon>0$. It follows that
$\mdim_{\Sigma, \rM}(Y)\le \mdim_{\Sigma, \rM}(Y, \rho')\le \mdim_{\Sigma, \rM}(X, \rho)$.
Since $\rho$ is an arbitrary compatible metric on $X$, we get $\mdim_{\Sigma, \rM}(Y)\le \mdim_{\Sigma, \rM}(X)$.
\end{proof}

\section{Sofic Metric Mean Dimension for Amenable Groups} \label{S-metric amenable}

In this section we show that the sofic metric mean dimension extends the metric mean dimension for actions of countable amenable groups:

\begin{theorem}\label{T-amenable}
Let a countable (discrete) amenable group $G$ act continuously on a compact metrizable space $X$.
Let $\Sigma$ be a sofic approximation sequence for $G$.
Then
\[
\mdim_{\Sigma, \rM} (X,\rho) = \mdim_\rM (X,\rho)
\]
for every continuous pseudometric $\rho$ on $X$. In particular,
\[ \mdim_{\Sigma, \rM}(X)=\mdim_\rM (X).
\]
\end{theorem}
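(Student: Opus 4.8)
The plan is to prove the two inequalities $\mdim_{\Sigma,\rM}(X,\rho)\le\mdim_\rM(X,\rho)$ and $\mdim_{\Sigma,\rM}(X,\rho)\ge\mdim_\rM(X,\rho)$ separately, by estimating the separation numbers $N_\varepsilon(\Map(\rho,F,\delta,\sigma_i),\rho_\infty)$ against the classical quantities $N_\varepsilon(X,\rho_{F'})$ for suitable F\o lner sets $F'$, and then dividing by $d_i$ and letting $F$, $\delta$, $\varepsilon$ go to their limits. Throughout I would use the Ornstein--Weiss reformulation of $\mdim_\rM(X,\rho)$ recorded in the excerpt, namely $\mdim_\rM(X,\rho)=\varliminf_{\varepsilon\to0}\frac{1}{|\log\varepsilon|}\varlimsup_{n\to\infty}\frac{\log N_\varepsilon(X,\rho_{F_n})}{|F_n|}$ for a fixed left F\o lner sequence, and the approximately multiplicative/quasi-tiling machinery of Lemma~\ref{L-Rokhlin}. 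The ``In particular'' clause is then immediate: taking $\inf_\rho$ over compatible metrics on both sides of the metric-dependent equality gives $\mdim_{\Sigma,\rM}(X)=\mdim_\rM(X)$, since the definitions of both $\mdim_{\Sigma,\rM}(X)$ and $\mdim_\rM(X)$ are exactly these infima (and for the pseudometric refinements one invokes Proposition~\ref{P-allow pseudometric}).

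\textbf{Lower bound.} For $\cD_\Sigma$ the lower bound came from building an explicit map $X\to\Map(\rho,F,\delta,\sigma)$ using a quasi-tiling of $[d]$ by translates of F\o lner sets $F_k$ together with an injection $\Psi$ into $G$; I would reuse that construction here. Given a good enough $\sigma_i:G\to\Sym(d_i)$ and quasi-tiles $\sigma_i(F_k)\cC_k$ filling up all but a $\delta'$-fraction of $[d_i]$, choose the injections $\psi_k:\cC_k\to G$ as in the proof of Lemma~\ref{L-top mean dim lower bound} so that the map $\Phi:X\to\Map(\rho,F,\delta,\sigma_i)$, $x\mapsto\varphi_x$ with $\varphi_x(sc)=s\psi_k(c)x$, is well-defined and continuous. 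The point is that a maximal $(\rho_{\tilde F},\varepsilon)$-separated subset of $X$, with $\tilde F$ the (injective) image of $\Psi$, maps under $\Phi$ into a $(\rho_\infty,\varepsilon)$-separated subset of $\Map(\rho,F,\delta,\sigma_i)$: if $\rho_{\tilde F}(x,y)\ge\varepsilon$ then $\rho(g x,g y)\ge\varepsilon$ for some $g=s\psi_k(c)\in\tilde F$, whence $\rho(\varphi_x(sc),\varphi_y(sc))=\rho(gx,gy)\ge\varepsilon$. Thus $N_\varepsilon(\Map(\rho,F,\delta,\sigma_i),\rho_\infty)\ge N_\varepsilon(X,\rho_{\tilde F})$, and since $|\tilde F|\ge(1-\delta')d_i$ and $\tilde F$ is a union of near-F\o lner pieces, dividing by $d_i$ and using Ornstein--Weiss (via Lemma~\ref{L-Rokhlin}, exactly as in Lemma~\ref{L-top mean dim lower bound}) yields $h^\varepsilon_{\Sigma,\infty}(\rho)\ge(1-\delta')\cdot\varlimsup_n\frac{\log N_\varepsilon(X,\rho_{F_n})}{|F_n|}$ for arbitrarily small $\delta'$. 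Multiplying by $\frac{1}{|\log\varepsilon|}$ and taking $\varliminf_{\varepsilon\to0}$ gives $\mdim_{\Sigma,\rM}(X,\rho)\ge\mdim_\rM(X,\rho)$. (One subtlety when $G$ is finite: the injection $\Psi$ into $G$ cannot be arranged, but then $\mdim_\rM$ of an amenable infinite group is the only case claimed by the theorem for the ``extends'' statement, and for finite $G$ the separate analysis is already flagged in Remark~\ref{R-sofic top not equal to top for finite}; I would restrict attention to $G$ infinite for this direction, and handle finite $G$ trivially since both sides are computable directly.)

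\textbf{Upper bound.} This is the main obstacle. I would mimic the structure of Lemma~\ref{L-top mean dim upper bound}: fix $\varepsilon>0$ and $\theta>0$, choose a finite $K\subseteq G$ and F\o lner-type sets $F_1,\dots,F_\ell$ from Lemma~\ref{L-Rokhlin} so that $\frac{\log N_{\varepsilon'}(X,\rho_{F_k})}{|F_k|}$ is controlled by $\varlimsup_n\frac{\log N_{\varepsilon'}(X,\rho_{F_n})}{|F_n|}+\theta$ for an appropriate $\varepsilon'$ slightly smaller than $\varepsilon$, set $F=\bigcup_k F_k^{-1}$, pick $\delta$ small. Given a good enough $\sigma_i$, quasi-tile $[d_i]$ by $\sigma_i(F_k)\cC_k$ with $|\cZ|\le\tau d_i$ for the leftover set $\cZ$. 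The key claim is that a map $\varphi\in\Map(\rho,F,\delta,\sigma_i)$ is determined up to small $\rho_\infty$-error by (i) its restriction to $\cZ$, (ii) the values $(\varphi(c))_{c\in\cC_k}$ read in a maximal $(\rho_{F_k},\varepsilon')$-separated net of $X$, and (iii) an exceptional-set coordinate controlling the $a$ where the approximate equivariance $\varphi(sa)\approx s\varphi(a)$ fails (the analogue of $X_0$, here contributing only an $\exp(o(d_i))$ factor by a Stirling/binomial count rather than a dimension bound). Indeed for $a=\sigma_i(s^{-1})c$ with $a$ outside the exceptional set, $\rho(\varphi(a),s^{-1}\varphi(c))$ is small, so knowing $\varphi(c)$ to within $\rho_{F_k}$-accuracy $\varepsilon'$ pins down $\varphi(a)=\varphi(\sigma_i(s^{-1})c)$ to within $\varepsilon$. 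This gives a covering of $\Map(\rho,F,\delta,\sigma_i)$ by roughly $\exp(\theta d_i)\cdot\prod_k N_{\varepsilon'}(X,\rho_{F_k})^{|\cC_k|}\cdot N_\varepsilon(X,\rho)^{|\cZ|}$ balls of $\rho_\infty$-radius $\varepsilon$, whence
\[
\frac{1}{d_i}\log N_{2\varepsilon}(\Map(\rho,F,\delta,\sigma_i),\rho_\infty)\le\theta+\sum_k\frac{|\cC_k||F_k|}{d_i}\Bigl(\tfrac{\log N_{\varepsilon'}(X,\rho_{F_k})}{|F_k|}\Bigr)+\tau\log N_\varepsilon(X,\rho).
\]
Using $\sum_k|\cC_k||F_k|\le d_i$ and $\tau$ small, the right side is at most $\varlimsup_n\frac{\log N_{\varepsilon'}(X,\rho_{F_n})}{|F_n|}+3\theta$; taking $\varlimsup_i$, then $\inf_\delta$, $\inf_F$, then multiplying by $\frac{1}{|\log2\varepsilon|}$ and $\varliminf_{\varepsilon\to0}$ (noting $\varepsilon'/\varepsilon\to1$ and $|\log2\varepsilon|/|\log\varepsilon|\to1$ so the scale distortions vanish) gives $\mdim_{\Sigma,\rM}(X,\rho)\le\mdim_\rM(X,\rho)$. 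The delicate points I expect to fight with are: making the exceptional-set count genuinely $o(d_i)$ uniformly (Stirling, as in Lemma~\ref{L-allow pseudometric}), ensuring the passage $\varepsilon'\to\varepsilon$ does not spoil the $\varliminf_{\varepsilon\to0}$, and handling the $\varlimsup$ over F\o lner sequences versus the Ornstein--Weiss limit cleanly; but these are all technical rather than conceptual, since the geometric idea is exactly that of Lemma~\ref{L-top mean dim upper bound} with ``dimension'' replaced by ``$\log$ of covering number''.
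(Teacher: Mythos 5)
Your overall strategy is exactly the paper's: the paper proves the two inequalities in Lemmas~\ref{L-amenable lower bound} and \ref{L-amenable upper bound}, using the Rokhlin quasi-tiling Lemma~\ref{L-Rokhlin}, a lower bound obtained by manufacturing many $(\rho_\infty,\varepsilon)$-separated elements of $\Map(\rho,F,\delta,\sigma)$ from $(\rho_{F_k},\varepsilon)$-separated subsets of $X$ placed on the tiles, and an upper bound obtained by coding each $\varphi\in\Map(\rho,F,\delta,\sigma)$ by its values at the tile centers read at $\rho_{F_k}$-resolution, its values on the leftover set $\cZ$, plus a Stirling-counted exceptional-set coordinate, with the scale discrepancies ($2\varepsilon$ versus $\varepsilon/4$) absorbed by the final division by $|\log\varepsilon|$. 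So in outline you are reproducing the paper's proof.

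However, two of your specific choices create gaps relative to the statement as given. First, for the lower bound you embed a single copy of $X$ via the injection $\Psi$ into $G$ from Lemma~\ref{L-top mean dim lower bound}; this forces $G$ to be infinite, and your dismissal of the finite case is based on a misreading: Remark~\ref{R-sofic top not equal to top for finite} concerns only the topological invariant (where additivity of covering dimension fails), whereas Theorem~\ref{T-amenable} is asserted, and is true, for all countable amenable $G$ including finite ones. The paper's construction in Lemma~\ref{L-amenable lower bound} avoids the issue entirely: it places an independent choice from a $(\rho_{F_k},\varepsilon)$-separated set $E_k$ at each tile $\cC_k$ (the family $\{\varphi_h:h\in\prod_k E_k^{\cC_k}\}$), which needs no injection into $G$, works verbatim for finite $G$, and yields the same normalized bound $\ge(1-\delta')(S(X,2\varepsilon,\rho)-\theta)$; you should adopt this instead. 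Second, in your upper bound you anchor the approximate equivariance at the points $a=\sigma_i(s^{-1})c$ (copying Lemma~\ref{L-top mean dim upper bound}), so to conclude that $\rho(\varphi(a),s^{-1}\varphi(c))$ is small you must pass from $\rho(\varphi(c),s\varphi(a))\le\sqrt{\delta}$ to the same statement after applying $s^{-1}$, which requires uniform $\rho$-continuity of $x\mapsto s^{-1}x$. That is available for a compatible metric (the $\kappa$-modulus trick), but the theorem is stated for every continuous pseudometric $\rho$, and for a pseudometric (e.g.\ $\rho(x,y)=\rho_0(x_{e_G},y_{e_G})$ on a shift) small $\rho$-distance gives no control after applying $s^{-1}$, so this step fails. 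The repair is what Lemma~\ref{L-amenable upper bound} actually does: take $F=\bigcup_k F_k$, tiles $\sigma(F_k)c$, and require the approximate equivariance at the centers $c\in\cW_\varphi$, so that $\rho(\varphi(sc),s\varphi(c))\le\sqrt{\delta}$ for $s\in F_k$ pins down $\varphi$ on the whole tile from $\rho_{F_k}$-knowledge of $\varphi(c)$ with no continuity of the group maps needed. (A smaller point of the same kind: at fixed $\varepsilon$ your intermediate comparisons should run through $S(X,2\varepsilon,\rho)$ and $S(X,\varepsilon/4,\rho)$ rather than $\varlimsup_n\frac{\log N_\varepsilon(X,\rho_{F_n})}{|F_n|}$ at the same scale, since $\log N_\varepsilon(X,\rho_F)$ itself is not an Ornstein--Weiss quantity; as you note, this washes out only after $\varepsilon\to0$.) With these two corrections your argument becomes the paper's proof.
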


Theorem~\ref{T-amenable} follows directly from Lemmas~\ref{L-amenable lower bound} and \ref{L-amenable upper bound} below.
The proof of Lemma~\ref{L-amenable lower bound} is similar to that of Lemma~\ref{L-top mean dim lower bound}.

\begin{lemma}\label{L-amenable lower bound}
Let a countable amenable group $G$ act continuously on a compact metrizable space $X$. Let $\rho$ be a continuous pseudometric on $X$.
Then for any $\varepsilon>0$ we have $ h^{\varepsilon}_{\Sigma ,\infty} (\rho)\ge S(X, 2\varepsilon, \rho)$. In particular,
$\mdim_{\Sigma, \rM} (X, \rho)\ge \mdim_\rM(X, \rho)$.
\end{lemma}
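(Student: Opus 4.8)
The plan is to follow the strategy of Lemma~\ref{L-top mean dim lower bound}, replacing the cover‑theoretic quantities by separated‑set analogues. Unlike the topological lower bound we will not need the injectivity trick (so there is no hypothesis that $G$ be infinite): instead of transporting a single copy of $X$ onto the Rokhlin tower, we place on each tile of the tower an \emph{independent} $\rho_{F_k}$‑separated configuration. The only genuinely new elementary ingredient is the inequality
$$ N_\varepsilon(Y,\rho') \ \ge\ \min\{|\cU| : \cU \text{ a finite open cover of } Y,\ \mesh(\cU,\rho')<2\varepsilon\}, $$
valid for any compact pseudometric space $(Y,\rho')$: cover $Y$ by the open balls of radius $\varepsilon-\eta_0$ about a $\varepsilon$‑separated set of maximal cardinality, where $\eta_0>0$ exists because $z\mapsto\min_{x\in E}\rho'(z,x)$ is continuous with maximum value $<\varepsilon$. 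This is where the factor $2$ in the statement comes from.

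Fix a nonempty finite $F\subseteq G$, $\delta>0$ and $\theta>0$; since $h^\varepsilon_{\Sigma,\infty}(\rho)=\inf_F\inf_{\delta>0}h^\varepsilon_{\Sigma,\infty}(\rho,F,\delta)$ it suffices to prove $h^\varepsilon_{\Sigma,\infty}(\rho,F,\delta)\ge S(X,2\varepsilon,\rho)-2\theta$. By the Ornstein--Weiss convergence defining $S(X,2\varepsilon,\rho)$, pick a finite symmetric $K\supseteq F$ and $\delta_0>0$ so small that whenever $|KF'\setminus F'|<\delta_0|F'|$ one has $\frac1{|F'|}\log\min_{\mesh(\cU,\rho_{F'})<2\varepsilon}|\cU|>S(X,2\varepsilon,\rho)-\theta$, and also $\delta_0(\diam(X,\rho))^2<\delta^2/2$. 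Apply Lemma~\ref{L-Rokhlin} with this $K$, invariance parameter $\delta_0$, and $\tau,\eta>0$ small enough that $(\tau+\eta)(\diam(X,\rho))^2<\delta^2/2$ and $(1-\tau-\eta)\max(0,S(X,2\varepsilon,\rho)-\theta)\ge S(X,2\varepsilon,\rho)-2\theta$. This gives $F_1,\dots,F_\ell$ with $|KF_k\setminus F_k|<\delta_0|F_k|$ such that for every good enough sofic approximation $\sigma:G\to\Sym(d)$ and every $\cW\subseteq[d]$ with $|\cW|\ge(1-\tau)d$ there are $\cC_1,\dots,\cC_\ell\subseteq\cW$ with: (1) the maps $(s,c)\mapsto\sigma_s(c)$ on $F_k\times\cC_k$ are bijective; (2) the sets $\sigma(F_k)\cC_k$ are disjoint and $|\bigcup_k\sigma(F_k)\cC_k|\ge(1-\tau-\eta)d$.

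For each $k$ fix an $\varepsilon$‑separated $E_k\subseteq X$ (w.r.t.\ $\rho_{F_k}$) with $|E_k|=N_\varepsilon(X,\rho_{F_k})$. Take $i$ large so that $\sigma_i$ satisfies the hypotheses of Lemma~\ref{L-Rokhlin}; put $\sigma=\sigma_i$, $d=d_i$, and $\cW=\{a\in[d]:\sigma_t\sigma_s(a)=\sigma_{ts}(a)\ \forall t\in F,\ s\in\bigcup_kF_k\}$, which has $|\cW|\ge(1-\tau)d$ for $i$ large; take $\cC_1,\dots,\cC_\ell$ as above. For $\omega=(x_{k,c})$ with $x_{k,c}\in E_k$, define $\varphi_\omega:[d]\to X$ by $\varphi_\omega(\sigma_s(c))=\alpha_s(x_{k,c})$ for $k\le\ell$, $c\in\cC_k$, $s\in F_k$ (well defined by (1),(2)), and $\varphi_\omega\equiv x_0$ elsewhere. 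For $t\in F$ and $a=\sigma_s(c)$ as above with $ts\in F_k$ one has $\sigma_t(a)=\sigma_{ts}(c)$ (since $c\in\cW$), hence $\varphi_\omega(\sigma_t(a))=\alpha_{ts}(x_{k,c})=\alpha_t(\varphi_\omega(a))$; the $a$ for which this fails have density at most $(\tau+\eta)+\frac1d\sum_k|F_k\setminus t^{-1}F_k|\,|\cC_k|\le(\tau+\eta)+\delta_0$, using $|F_k\setminus t^{-1}F_k|=|t^{-1}F_k\setminus F_k|\le|KF_k\setminus F_k|$ ($t^{-1}\in K$ as $K$ is symmetric). Thus $(\rho_2(\varphi_\omega\circ\sigma_t,\alpha_t\circ\varphi_\omega))^2\le((\tau+\eta)+\delta_0)(\diam(X,\rho))^2<\delta^2$, so $\varphi_\omega\in\Map(\rho,F,\delta,\sigma_i)$. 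If $\omega,\omega'$ differ in a coordinate $(k_0,c_0)$ then $\rho_\infty(\varphi_\omega,\varphi_{\omega'})\ge\max_{s\in F_{k_0}}\rho(\alpha_sx_{k_0,c_0},\alpha_sx'_{k_0,c_0})=\rho_{F_{k_0}}(x_{k_0,c_0},x'_{k_0,c_0})\ge\varepsilon$, so $\omega\mapsto\varphi_\omega$ is injective with $\varepsilon$‑separated image. Hence
$$ \frac1{d_i}\log N_\varepsilon(\Map(\rho,F,\delta,\sigma_i),\rho_\infty)\ \ge\ \sum_{k=1}^\ell\frac{|\cC_k||F_k|}{d_i}\cdot\frac{\log N_\varepsilon(X,\rho_{F_k})}{|F_k|}\ \ge\ (1-\tau-\eta)\max\bigl(0,\,S(X,2\varepsilon,\rho)-\theta\bigr), $$
where the last inequality uses $\frac{\log N_\varepsilon(X,\rho_{F_k})}{|F_k|}\ge\max(0,S(X,2\varepsilon,\rho)-\theta)$ (from $N_\varepsilon\ge1$, the first‑paragraph inequality, and the choice of $K,\delta_0$) together with $(1-\tau-\eta)d_i\le\sum_k|\cC_k||F_k|=|\bigcup_k\sigma(F_k)\cC_k|\le d_i$. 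Letting $i\to\infty$ and invoking the choice of $\tau,\eta$ gives $h^\varepsilon_{\Sigma,\infty}(\rho,F,\delta)\ge S(X,2\varepsilon,\rho)-2\theta$, which proves the first assertion; the ``in particular'' follows on dividing by $|\log\varepsilon|$ and letting $\varepsilon\to0$, using $|\log(2\varepsilon)|/|\log\varepsilon|\to1$.

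The main obstacle is the same parameter bookkeeping as in Lemma~\ref{L-top mean dim lower bound}: one must choose the symmetric set $K$ and the constants $\tau,\eta,\delta_0$ so that simultaneously the tiles $F_k$ are invariant enough to keep every $\varphi_\omega$ inside $\Map(\rho,F,\delta,\sigma_i)$, the ``leftover'' part of $[d_i]$ has negligible density, and $F_k$ is invariant enough for the Ornstein--Weiss estimate on $N_\varepsilon(X,\rho_{F_k})$. Once this is arranged, the separated‑set count and the passage to the limit are routine.
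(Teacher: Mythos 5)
Your proposal is correct and follows essentially the same route as the paper's proof: choose $K$ via the Ornstein--Weiss convergence, tile $[d_i]$ by Lemma~\ref{L-Rokhlin}, place independent $\rho_{F_k}$-separated configurations on the tiles to build an $(\rho_\infty,\varepsilon)$-separated family in $\Map(\rho,F,\delta,\sigma_i)$, and count. Your extra touches (the explicit ball-cover argument giving $N_\varepsilon\ge\min_{\mesh<2\varepsilon}|\cU|$, taking $K\supseteq F$, and the $\max(0,\cdot)$ bookkeeping) only make explicit points the paper treats tacitly.
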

\begin{proof}
It suffices to show that $h^\varepsilon_{\Sigma ,\infty} (\rho ) \geq S(X, 2\varepsilon, \rho) - 2\theta$ for every $\theta>0$.

Take a nonempty finite subset $K$ of $G$ and $\varepsilon'>0$ such that
for any nonempty finite subset $F'$ of $G$ satisfying $|KF'\setminus F'|<\varepsilon'|F'|$, one has
\begin{align} \label{E-amenable lower bound}
 \frac{1}{|F'|}\log N_{\varepsilon}(X, \rho_{F'})\overset{\eqref{E-separated vs spanning}}\ge \frac{1}{|F'|}\log \min_{\mesh(\cU, \rho_{F'})<2\varepsilon} |\cU|\ge S(X, 2\varepsilon, \rho)-\theta.
\end{align}

Let $F$ be a nonempty finite subset of $G$ and $\delta > 0$. Let $\sigma$ be a map from $G$ to $\Sym (d)$
for some $d\in\Nb$.
Now it suffices to show that if $\sigma$ is a good enough sofic approximation then
\begin{align*}
\frac{1}{d} \log N_{\varepsilon} (\Map (\rho ,F,\delta ,\sigma ),\rho_{\infty} )\ge S(X, 2\varepsilon, \rho) - 2\theta .
\end{align*}

Take $\delta'>0$ such that $\sqrt{\delta'}\diam(X, \rho)<\delta/2$ and
$(1-\delta' )(S(X, 2\varepsilon, \rho) - \theta ) \ge S(X, 2\varepsilon, \rho) - 2\theta$.
By Lemma~\ref{L-Rokhlin} there are an $\ell\in \Nb$
and nonempty finite subsets $F_1, \dots, F_\ell$ of $G$
satisfying $|(K\cup F)F_k\setminus F_k|<\min(\varepsilon', \delta') |F_k|$ for all $k=1, \dots, \ell$
such that for every map $\sigma : G\to\Sym(d)$ for some $d\in \Nb$ which is a good
enough sofic approximation for $G$ and every $\cW\subseteq [d]$ with $|\cW|\ge (1-\delta'/2)d$ there exist  $\cC_1, \dots, \cC_\ell\subseteq \cW$ satisfying the following:
\begin{enumerate}
\item for every $k=1, \dots, \ell$, the map $(s, c)\mapsto \sigma_s(c)$ from $F_k\times \cC_k$ to $\sigma(F_k)\cC_k$ is bijective,

\item the sets $\sigma(F_1)\cC_1, \dots, \sigma(F_\ell)\cC_\ell $ are pairwise disjoint and $|\bigcup_{k=1}^\ell\sigma(F_k)\cC_k|\ge (1-\delta')d$.
\end{enumerate}

Let $\sigma: G\rightarrow \Sym(d)$ for some $d\in \Nb$ be a good enough sofic approximation for $G$ such that $|\cW|\ge (1-\delta'/2)d$ for
$$ \cW:=\{a\in [d]: \sigma_t\sigma_s(a)=\sigma_{ts}(a) \mbox{ for all } t\in F, s\in \bigcup_{k=1}^\ell F_k\}.$$
Then we have $\cC_1, \dots, \cC_\ell$ as above.

For every $k\in\{ 1,\dots, \ell\}$ pick an $\varepsilon$-separated set
$E_k \subseteq X$ with respect to $\rho_{F_k}$ of maximal cardinality.
Then
$$ \frac{1}{|F_k|}\log |E_k|=\frac{1}{|F_k|}\log N_\varepsilon(X, \rho_{F_k})\overset{\eqref{E-amenable lower bound}}\ge S(X, 2\varepsilon, \rho)-\theta.$$
For each $h = (h_k )_{k=1}^\ell \in\prod_{k=1}^\ell (E_k )^{\cC_k}$
take a map $\varphi_h : [d]\rightarrow X$ such that
\[
\varphi_h(s c) = s(h_k (c))
\]
for all $k\in \{ 1,\dots ,\ell \}$, $c\in \cC_k$, and $s\in F_k$.
By Lemma~\ref{L-construct map} one has
$\varphi_h \in\Map (\rho , F ,\delta , \sigma )$.

Now if $h = (h_k )_{k=1}^\ell$ and $h' = (h_k' )_{k=1}^\ell$ are distinct elements of
$\prod_{k=1}^\ell (E_k )^{\cC_k}$, then $h_k (c) \neq h_k' (c)$ for some $k\in \{ 1,\dots ,\ell \}$ and $c\in \cC_k$.
Since $h_k (c)$ and $h_k' (c)$ are distinct points in $E_k$ which is $\varepsilon$-separated with respect to $\rho_{F_k}$,
$h_k (c)$ and $h_k' (c)$ are $\varepsilon$-separated with respect to $\rho_{F_k}$, and thus
we have
$\rho_{\infty} (\varphi_h ,\varphi_{h'} ) \ge \varepsilon$. Therefore
\begin{align*}
\frac1d \log N_{\varepsilon} (\Map (\rho ,F,\delta ,\sigma ),\rho_{\infty} )
&\geq \frac1d \sum_{k=1}^\ell |\cC_k | \log |E_k | \\
&\geq \frac1d \sum_{k=1}^\ell |\cC_k | |F_k | (S(X, 2\varepsilon, \rho) - \theta ) \\
&\geq (1-\delta' )(S(X, 2\varepsilon, \rho) - \theta ) \\
&\geq S(X, 2\varepsilon, \rho) - 2\theta ,
\end{align*}
as desired.
\end{proof}

The proof of Lemma~\ref{L-amenable upper bound} is similar to that of Lemma~\ref{L-top mean dim upper bound}, but considerably easier.

\begin{lemma}\label{L-amenable upper bound}
Let a countable amenable group $G$ act continuously on a compact metrizable space $X$. Let $\rho$ be a continuous pseudometric on $X$.
Then for any $\varepsilon>0$ we have $ h^{\varepsilon}_{\Sigma ,\infty} (\rho)\le S(X, \varepsilon/4, \rho)$. In particular,
$\mdim_{\Sigma, \rM} (X, \rho)\le \mdim_\rM(X, \rho)$.
\end{lemma}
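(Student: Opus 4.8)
The plan is to establish $h^{\varepsilon}_{\Sigma,\infty}(\rho) \le S(X, \varepsilon/4, \rho)$ by reversing the strategy of the lower bound: instead of using a F{\o}lner-type Rokhlin tiling to \emph{build} many separated maps, I will use it to \emph{cover} every element of $\Map(\rho, F, \delta, \sigma)$ by the behavior of its values on a small set of coordinates. Fix $\varepsilon > 0$; it suffices to show $h^{\varepsilon}_{\Sigma,\infty}(\rho) \le S(X, \varepsilon/4, \rho) + c\theta$ for every $\theta > 0$ and some fixed constant $c$. First I would choose a nonempty finite $K \subseteq G$ and $\varepsilon' > 0$ so that any $F'$ with $|KF' \setminus F'| < \varepsilon'|F'|$ satisfies $\frac{1}{|F'|}\log \min_{\mesh(\cU, \rho_{F'}) < \varepsilon/2} |\cU| \le S(X, \varepsilon/4, \rho) + \theta$, and fix such a cover $\cU_{F'}$ of $X$ realizing the minimum; equivalently work with a maximal $(\rho_{F'}, \varepsilon/4)$-separated set $E_{F'} \subseteq X$, whose $\varepsilon/4$-balls in $\rho_{F'}$ cover $X$ and have $\rho$-diameter at most $\varepsilon/2$.

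**Next**, invoke Lemma~\ref{L-Rokhlin} with a small parameter $\tau$ (chosen with $\tau \log N_{\varepsilon/4}(X,\rho) \le \theta$, say) and with $\delta'$ controlling $\rho_2$-to-pointwise passage, to obtain $\ell \in \Nb$ and nonempty finite $F_1, \dots, F_\ell \subseteq G$ with $|KF_k \setminus F_k| < \min(\varepsilon', \delta')|F_k|$, such that for a good enough sofic approximation $\sigma : G \to \Sym(d)$ and any $\cW \subseteq [d]$ with $|\cW| \ge (1 - \tau/2)d$ there are $\cC_1, \dots, \cC_\ell \subseteq \cW$ with $(s,c) \mapsto \sigma_s(c)$ bijective on $F_k \times \cC_k$, the sets $\sigma(F_k)\cC_k$ pairwise disjoint, and $|\bigcup_k \sigma(F_k)\cC_k| \ge (1-\tau)d$. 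Put $F = \bigcup_k F_k^{-1}$ (together with $e_G$) and take $\delta > 0$ small relative to $\varepsilon$, $\tau$, $|F|$, and the $F_k$. For $\sigma$ a good enough sofic approximation, let $\cZ = [d] \setminus \bigcup_k \sigma(F_k)\cC_k$, so $|\cZ| \le \tau d$.

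**Now the counting step.** Given $\varphi \in \Map(\rho, F, \delta, \sigma)$, the set $\Lambda_\varphi$ of coordinates $a$ with $\rho(\varphi(sa), s\varphi(a)) \le \sqrt{\delta}$ for all $s \in F$ has $|\Lambda_\varphi| \ge (1 - |F|\delta)d$. I associate to $\varphi$ the following combinatorial data: (i) the set $[d] \setminus \Lambda_\varphi$, which is small, together with an $\varepsilon$-net value of $\varphi$ at each of its coordinates; (ii) for each $a \in \cZ$, an $\varepsilon$-net value approximating $\varphi(a)$; and (iii) for each $k$ and each $c \in \cC_k$, the element of $E_{F_k}$ whose $\rho_{F_k}$-ball of radius $\varepsilon/4$ contains $\varphi(c)$. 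The claim is that this data determines $\varphi$ up to $\rho_\infty$-distance less than $\varepsilon$ on the coordinates $a \in \Lambda_\varphi \cap \bigcup_k \sigma(F_k)\cC_k$: for such $a$, write $a = \sigma_s(c)$ with $s \in F_k$, $c \in \cC_k$; since $c \in \cW$ one has (using $s^{-1} \in F$ and the approximate-multiplicativity defining $\cW$) that $\rho(\varphi(a), s^{-1}\varphi(c)) \le \sqrt{\delta}$, while the chosen $E_{F_k}$-value $y$ satisfies $\rho(sy, \varphi(c)) = \rho_{F_k \ni s}(\cdot) \le \varepsilon/4$ at the $s$-coordinate, hence $\rho(\varphi(a), s^{-1}y) \le \sqrt{\delta} + \varepsilon/4 < \varepsilon/2$. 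Thus two maps with the same data lie within $\varepsilon$ of each other in $\rho_\infty$, so an $(\rho_\infty, \varepsilon)$-separated set injects into the set of possible data. Counting: the number of choices for (i) is at most $\binom{d}{\lfloor |F|\delta d\rfloor}(N_{\varepsilon}(X,\rho)+1)^{|F|\delta d} \le \exp(\theta d)$ for $\delta$ small (Stirling, as in Lemma~\ref{L-allow pseudometric}); for (ii), at most $(N_\varepsilon(X,\rho)+1)^{|\cZ|} \le \exp(\theta d)$ since $|\cZ| \le \tau d$; for (iii), at most $\prod_k |E_{F_k}|^{|\cC_k|} = \prod_k N_{\varepsilon/4}(X, \rho_{F_k})^{|\cC_k|} \le \exp\big(\sum_k |\cC_k||F_k|(S(X,\varepsilon/4,\rho)+\theta)\big) \le \exp(d(S(X,\varepsilon/4,\rho)+\theta))$, using $\sum_k |\cC_k||F_k| \le d$. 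Multiplying gives $N_\varepsilon(\Map(\rho,F,\delta,\sigma),\rho_\infty) \le \exp(d(S(X,\varepsilon/4,\rho)+3\theta))$, hence $h^\varepsilon_{\Sigma,\infty}(\rho, F, \delta) \le S(X,\varepsilon/4,\rho)+3\theta$, and letting $\delta, \theta \to 0$ yields the conclusion; the ``in particular'' statement follows by dividing by $|\log\varepsilon|$ and taking $\varliminf_{\varepsilon \to 0}$, since $\varliminf_{\varepsilon\to 0}\frac{S(X,\varepsilon/4,\rho)}{|\log\varepsilon|} = \mdim_\rM(X,\rho)$.

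**The main obstacle** I anticipate is the bookkeeping on the coordinates where the approximation degrades: one must simultaneously control the coordinates outside $\Lambda_\varphi$ (where $\varphi$ is not approximately equivariant), the coordinates in $\cZ$ (outside the Rokhlin tower), and, within the tower, verify that the identity $sa = c$ holds on $\cW$ so that the value $\varphi(c)$ genuinely recovers $\varphi(a)$ after applying $s^{-1}$ — this requires $\cW$ to encode $\sigma_s\sigma_{s^{-1}}(a) = a$, so $\cW$ must be defined with $F = \bigcup_k F_k^{-1}$ in mind, exactly as in the proof of Lemma~\ref{L-top mean dim upper bound}. Keeping the three error contributions each below $\theta d$ while choosing $\delta$ after $\tau$, $\ell$, and the $F_k$ (which in turn depend on $\varepsilon'$, $K$, hence on $\theta$) is the delicate ordering, but it is the same pattern already executed in Section~\ref{S-top amenable}.
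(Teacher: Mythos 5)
Your global architecture is the same as the paper's: tile $[d]$ by Rokhlin towers over base sets $\cC_k\subseteq\cW$, count the possible values of $\varphi$ at the bases by $N_{\varepsilon/4}(X,\rho_{F_k})\le \exp\big((S(X,\varepsilon/4,\rho)+\theta)|F_k|\big)$, absorb $\cZ$ and the non-equivariant coordinates into $\exp(\theta d)$ factors via the Stirling estimate, and conclude that a $(\rho_\infty,\varepsilon)$-separated set injects into the data set. But the reconstruction step, which is the heart of the argument, does not work as you wrote it. With $a=\sigma_s(c)$, $s\in F_k$, and $s^{-1}\in F=\bigcup_k F_k^{-1}$, the defining condition of $\Lambda_\varphi$ gives $\rho\big(\varphi(\sigma_{s^{-1}}a),\,s^{-1}\varphi(a)\big)\le\sqrt{\delta}$, i.e.\ $\rho\big(\varphi(c),\,s^{-1}\varphi(a)\big)\le\sqrt{\delta}$ once $\sigma_{s^{-1}}\sigma_s(c)=c$; it does \emph{not} give your claimed $\rho\big(\varphi(a),\,s^{-1}\varphi(c)\big)\le\sqrt{\delta}$. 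To convert the former into a bound on $\rho(\varphi(a),\cdot)$ you must apply $\alpha_s$ and use that $\rho(x,y)$ small forces $\rho(sx,sy)$ small. That continuity step (the analogue of the $\kappa$ in Lemma~\ref{L-top mean dim upper bound}, whose proof you are transplanting) is missing from your proposal, and, more seriously, it is simply false for a general continuous pseudometric $\rho$: e.g.\ for $\rho(x,y)=\rho_Z(x_{e_G},y_{e_G})$ on a shift $Z^G$ one has $\rho(sx,sy)=\rho_Z(x_{s^{-1}},y_{s^{-1}})$, which is not controlled by $\rho(x,y)$ at all. Since the lemma is stated (and later used, via Theorem~\ref{T-amenable}) for arbitrary continuous pseudometrics, your orientation of the tower estimate breaks down in exactly the generality required. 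There is also a smaller mismatch in the same step: your recorded base datum only controls $\rho(ty,t\varphi(c))$ for $t\in F_k$, so the quantity $\rho(\varphi(a),s^{-1}y)$ you try to bound involves the coordinate $s^{-1}\notin F_k$ and is not controlled by the data.

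The repair is to flip the orientation, which is what the paper does: take $F\supseteq\bigcup_k F_k$ and use the approximate equivariance at the tower \emph{base}, so that for $a=\sigma_s(c)$ with $s\in F_k$ one gets directly $\rho\big(\varphi(a),s\varphi(c)\big)\le\sqrt{\delta}$ whenever the base $c$ is a good coordinate for $\varphi$; then two maps sharing the base datum $y$ satisfy $\rho(\varphi(a),\psi(a))\le 2\sqrt{\delta}+\rho(s\varphi(c),s\psi(c))\le 2\sqrt{\delta}+\varepsilon/2<\varepsilon$ by the triangle inequality alone, with no continuity of the action and hence no problem for pseudometrics. This creates the secondary issue of bases $c$ at which $\varphi$ is not approximately equivariant; the paper resolves it by first passing (via the same Stirling count you already invoke) to a subfamily $\sF$ of the separated set on which the good set $\cW_\varphi$ is a fixed $\cW$, and only then applying the Rokhlin lemma to that $\cW$, so all bases are automatically good for all $\varphi\in\sF$. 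Alternatively, within your encoding scheme you could record net values on the (at most $\max_k|F_k|\cdot|F|\delta d$) coordinates lying in towers with bad bases, which is affordable because $\delta$ is chosen after the $F_k$. With one of these fixes, and with the mesh threshold $\varepsilon/2$ in your choice of $K,\varepsilon'$ corrected to $\varepsilon/4$ so that it actually bounds $N_{\varepsilon/4}(X,\rho_{F_k})$, your counting goes through and reproduces the paper's proof.
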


\begin{proof}
Let $\varepsilon> 0$.
It suffices to show that
$h^{\varepsilon}_{\Sigma ,\infty} (\rho)\le S(X, \varepsilon/4, \rho) + 3\theta$ for every $\theta >0$.

Take a nonempty finite subset $K$ of $G$ and $\delta'>0$ such that
$\min_{\mesh(\cU, \rho_{F'})<\varepsilon/4} |\cU|<\exp((S(X, \varepsilon/4, \rho) + \theta)|F'|)$ for every nonempty finite subset
$F'$ of $G$ satisfying $|KF'\setminus F'|<\delta'|F'|$.
Take an $\eta\in (0,1)$ such that $(N_{\varepsilon/2}(X, \rho ))^{2\eta} \leq \exp(\theta)$.

By Lemma~\ref{L-Rokhlin} there are an $\ell\in \Nb$
and nonempty finite subsets $F_1, \dots, F_\ell$ of $G$
satisfying $|KF_k\setminus F_k|<\delta' |F_k|$ for all $k=1, \dots, \ell$
such that for every map $\sigma : G\to\Sym(d)$ for some $d\in \Nb$ which is a good
enough sofic approximation for $G$ and every $\cW\subseteq [d]$ with $|\cW|\ge (1-\eta)d$ there exist  $\cC_1, \dots, \cC_\ell\subseteq \cW$ satisfying the following:
\begin{enumerate}
\item for every $k=1, \dots, \ell$, the map $(s, c)\mapsto \sigma_s(c)$ from $F_k\times \cC_k$ to $\sigma(F_k)\cC_k$ is bijective,

\item the sets $\sigma(F_1)\cC_1, \dots, \sigma(F_\ell)\cC_\ell $ are pairwise disjoint and $|\bigcup_{k=1}^\ell\sigma(F_k)\cC_k|\ge (1-2\eta)d$.
\end{enumerate}
Then
\begin{align} \label{E-top upper}
N_{\varepsilon/4}(X, \rho_{F_k})\overset{\eqref{E-separated vs spanning}}\le \min_{\mesh(\cU, \rho_{F_k})<\varepsilon/4} |\cU|<\exp((S(X, \varepsilon/4, \rho) + \theta)|F_k|)
\end{align}
for every $k=1, \dots, \ell$.

Set $F=\bigcup_{k=1}^\ell F_k$. Let $\delta > 0$ be a small positive number which we will determine in a moment.
Let $\sigma$ be a map from $G$ to $\Sym (d)$ for some sufficiently large $d\in\Nb$ which is a good enough sofic approximation for $G$.
We will show that $N_{\varepsilon}(\Map(\rho, F, \delta, \sigma), \rho_{\infty})\le \exp(S(X, \varepsilon/4, \rho) + 3\theta)d$,
which will complete the proof since we can then conclude that
$h^{\varepsilon}_{\Sigma ,\infty}(\rho, F, \delta)\le S(X, \varepsilon/4, \rho) + 3\theta$
and hence $h^{\varepsilon}_{\Sigma ,\infty}(\rho)\le S(X, \varepsilon/4, \rho) + 3\theta$.

For every $\varphi\in\Map (\rho, F, \delta, \sigma)$,
by Lemma~\ref{L-almost}
the set $\cW_{\varphi}$
of all $a\in [d]$ satisfying
\[
\rho(\varphi (sa), s\varphi(a)) \le \sqrt{\delta}
\]
for all $s\in F$ has cardinality at least $(1-|F|\delta) d$.

For each $\cW\subseteq [d]$ we
define on the set of maps from $[d]$ to $X$ the pseudometric
\[
\rho_{\cW,\infty} (\varphi , \psi ) = \rho_{\infty} (\varphi|_\cW,  \psi|_\cW).
\]

Take a $(\rho_\infty, \varepsilon )$-separated subset $\sE$ of $\Map (\rho, F, \delta, \sigma)$
of maximal cardinality.

Set $n = |F|$. Stirling's approximation says that $\frac{t!}{(2\pi t)^{1/2}(t/e)^t)}\to 1$ as $t\to +\infty$.
When $n\delta <1/2$,
the number of subsets of $[d]$ of cardinality no greater than $n\delta d$
is equal to $\sum_{j=0}^{\lfloor n\delta d \rfloor} \binom{d}{j}$, which is at most
$n\delta d \binom{d}{n\delta d}$,
which by Stirling's approximation is less than $\exp(\beta d)$
for some $\beta > 0$ depending on $\delta$ and $n$
but not on $d$ when $d$ is sufficiently large with $\beta\to 0$ as $\delta\to 0$ for a fixed $n$.
Thus when $\delta$ is small enough and $d$ is large enough, there is a subset $\sF$ of $\sE$ with $\exp(\theta d)|\sF|\ge |\sE|$
such that the set $\cW_{\varphi}$ is the same, say $\cW$, for every $\varphi\in \sF$,
and $|\cW|/d>1-\eta$.
Then we have $\cC_1, \dots, \cC_\ell\subseteq \cW$ as above.

Let $1\le k\le \ell$ and $c\in \cC_k$.
Let $\sD_{k, c}$ be a maximal $(\varepsilon/2)$-separated subset of $\sF$ with respect to $\rho_{\sigma(F_k)c ,\infty}$.
Then $\sD_{k, c}$ is a $(\rho_{\sigma(F_k)c ,\infty}, \varepsilon/2)$-spanning subset of $\sF$ in the sense that for any $\varphi\in \sF$, there exists some
$\psi\in \sD_{k, c}$ with $\rho_{\sigma(F_k)c ,\infty}(\varphi, \psi)<\varepsilon/2$.
We will show that
$|\sD_{k, c}|\le \exp((S(X, \varepsilon/4, \rho) + \theta)|F_k|)$ when $\delta$ is small enough.
For any two distinct elements $\varphi$ and $\psi$ of $\sD_{k, c}$ we have, for every $s\in F_k$, since $c\in \cW_{\varphi}\cap \cW_{\psi}$,
\begin{align*}
\rho(s\varphi(c), s\psi(c))&\ge \rho(\varphi(sc), \psi(sc))-\rho(s\varphi(c), \varphi(sc))-\rho(s\psi(c), \psi(sc))\\
&\ge  \rho(\varphi(sc), \psi(sc))-2\sqrt{\delta},
\end{align*}
and hence
\begin{align*}
\rho_{F_k}(\varphi(c), \psi(c))&=\max_{s\in F_k}\rho(s\varphi(c), s\psi(c))\ge \max_{s\in F_k}\rho(\varphi(sc), \psi(sc))-2\sqrt{\delta}>\varepsilon/2-\varepsilon/4=\varepsilon/4,
\end{align*}
granted that $\delta$ is taken small enough.
Thus $\{\varphi(c) : \varphi\in \sD_{k, c} \}$ is a $(\rho_{F_k}, \varepsilon/4 )$-separated subset of $X$ of cardinality $|\sD_{k, c}|$,
so that
\begin{align*}
|\sD_{k, c}|\le N_{\varepsilon/4}(X, \rho_{F_k})
\overset{\eqref{E-top upper}}\le \exp((S(X, \varepsilon/4, \rho) + \theta)|F_k|),
\end{align*}
as we wished to show.

Set
\[
\cZ = [d] \setminus \bigcup_{k=1}^\ell\sigma(F_k)\cC_k,
\]
and take a maximal $(\rho, \varepsilon/2)$-separated subset $Y$ of $X$. Then $Y^\cZ$ is a $(\rho_\infty, \varepsilon/2)$-spanning subset of $X^\cZ$ in the sense that for any $\varphi\in X^\cZ$ there is some $\psi\in Y^\cZ$ with $\rho_\infty(\varphi, \psi)<\varepsilon/2$.
We have
\[
|Y^\cZ | \leq (N_{\varepsilon/2}(X, \rho ))^{|\cZ|} \leq (N_{\varepsilon/2}(X, \rho ))^{2\eta d}.
\]

Write $\sA$ for the set of all maps
$\varphi : [d]\rightarrow X$ such that $\varphi|_\cZ\in Y^\cZ$ and
$\varphi|_{\sigma (F_k)c}\in \sD_{k,c}|_{\sigma (F_k)c}$ for all $1\le k\le \ell$ and $c\in \cC_k$.
Then, by our choice of $\eta$,
\begin{align*}
|\sA | &= |Y^\cZ | \prod_{k=1}^\ell \prod_{c\in \cC_k} |\sD_{k,c} |
\le (N_{\varepsilon/2}(X, \rho ))^{2\eta d} \exp\bigg(\sum_{k=1}^\ell\sum_{c\in \cC_k}(S(X, \varepsilon/4, \rho) + \theta)|F_k|\bigg)\\
&= (N_{\varepsilon/2}(X, \rho ))^{2\eta d}\exp\bigg((S(X, \varepsilon/4, \rho) + \theta)\sum_{k=1}^\ell|F_k| |C_k|\bigg)\\
&\le \exp(\theta d)
\exp((S(X, \varepsilon/4, \rho) + \theta)d)=\exp((S(X, \varepsilon/4, \rho) + 2\theta)d).
\end{align*}

Now since every element of $\sF$ lies within $\rho_{\infty}$-distance $\varepsilon/2$ to an element
of $\sA$ and $\sF$ is $\varepsilon$-separated with respect to $\rho_{\infty}$,
the cardinality of $\sF$ is at most that of $\sA$.
Therefore
\begin{align*}
N_{\varepsilon}(\Map (\rho, F, \delta, \sigma), \rho_\infty)&=|\sE|\le \exp(\theta d)|\sF|\le \exp(\theta d)|\sA|\\
&\le \exp(\theta d)\exp((S(X, \varepsilon/4, \rho) + 2\theta)d)\\
&=\exp((S(X, \varepsilon/4, \rho) + 3\theta)d),
\end{align*}
as desired.
\end{proof}

\section{Comparison of Sofic Mean Dimensions} \label{S-comparison}

In this section we prove the following relation between sofic mean dimensions:

\begin{theorem} \label{T-top vs metric}
Let a countable sofic group $G$ act continuously on  a compact metrizable space $X$. Let $\Sigma$ be a sofic approximation sequence of $G$.  Then
$$ \mdim_\Sigma(X)\le \mdim_{\Sigma, \rM}(X).$$
\end{theorem}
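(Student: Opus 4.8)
The plan is to compare, for a fixed finite open cover $\cU$ of $X$ and a compatible metric $\rho$, the quantity $\cD(\cU,\rho,F,\delta,\sigma)$ against the separation numbers $N_\varepsilon(\Map(\rho,F,\delta,\sigma),\rho_\infty)$ on the same map spaces, so that the inequality descends directly to the level of the sofic approximations $\sigma_i$. The classical (single-space) fact underlying this is: if $\mesh(\cU,\rho)<\varepsilon$ then $\cD(\cU^d|_K)\le \log N_\varepsilon(K,\rho_\infty)/\log 2$ up to the usual normalization, or more precisely the covering-dimension of a compact metric space is controlled by the growth of its $\varepsilon$-separated sets; what I actually want is the relation, essentially from \cite{LW}, that for a compact metric space $(K,\rho_\infty)$ and a cover $\cW$ of $K$ with $\mesh(\cW)<\varepsilon$ one has $\cD(\cW)\le \frac{\log N_\varepsilon(K,\rho_\infty)}{\log 2}$ — wait, the cleaner route is via Lemma~\ref{L-compatible map}: I will produce a $\cW$-compatible map from $K=\Map(\rho,F,\delta,\sigma)$ into a space whose dimension is bounded by a separation count.

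Concretely, first I would reduce to showing $\cD_\Sigma(\cU,\rho)\le \mdim_{\Sigma,\rM}(X,\rho)$ for every finite open cover $\cU$ and every compatible metric $\rho$; taking the supremum over $\cU$ and the infimum over $\rho$ then gives the theorem (using Lemma~\ref{L-sofic top mean dim independent of metric} so that the left side does not depend on $\rho$). Fix $\cU$ and choose $\varepsilon>0$ with $\mesh(\cU,\rho)<\varepsilon$, so that $\cU^d$ restricted to any subset of $X^{[d]}$ has mesh (in $\rho_\infty$) less than $\varepsilon$; hence every $\rho_\infty$-ball of radius $\varepsilon/2$ meets at most ... no — the key elementary estimate is that a cover of a compact metric space by $\varepsilon$-balls centered at an $(\varepsilon)$-spanning (equivalently maximal $\varepsilon$-separated) set $\sE$ refines any cover of mesh $<\varepsilon$ only after a small adjustment; instead I will invoke the standard bound $\cD(\cW)\le \dim K$ and the inequality $\dim K\le$ (a quantity governed by $N_\varepsilon$) via the Lebesgue-number/nerve construction. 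The cleanest statement to use is: for a finite open cover $\cW$ of a compact metric space $K$ with $\mesh(\cW,\rho_\infty)<\varepsilon$, one has $\cD(\cW)\le \log_2 N_{\varepsilon}(K,\rho_\infty)$ — I would prove this quickly by noting a maximal $2\varepsilon$-separated set $\{y_1,\dots,y_N\}$ with $N=N_{2\varepsilon}(K,\rho_\infty)\le N_\varepsilon(K,\rho_\infty)$ gives a map $K\to [0,1]^N$, $x\mapsto (\max(0,1-\rho_\infty(x,y_j)/\varepsilon))_j$, which is $\cW$-compatible and lands in the union of the finitely many faces of dimension $\le$ something; actually the sharp dimension bound on such an image is $N$, giving $\cD(\cW)\le N$, which is far too weak. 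So the honest approach must instead go through the fact from \cite{LW} that $\cD(\cU^F)\le S$-type quantities, i.e. I should mimic the amenable comparison $\mdim(X)\le\mdim_\rM(X,\rho)$ of Lindenstrauss–Weiss \cite[Theorem 4.4]{LW}: there one uses that $\cD(\cU^F)\le \log N_{\varepsilon}(X,\rho_F)/|\log\varepsilon|$-flavored bounds obtained by iterating over scales $\varepsilon = 1/2, 1/4,\dots$ and the subadditivity $\cD(\cU_1\vee\cU_2)\le \cD(\cU_1)+\cD(\cU_2)$.

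Thus the real plan: fix $\cU$; for each $n$ pick a cover $\cU_n$ with $\mesh(\cU_n,\rho)<2^{-n}$ refining $\cU$, and use that $\cD(\cU,\rho,F,\delta,\sigma)\le \cD(\cU_n,\rho,F,\delta,\sigma)$, together with the single-space inequality $\cD(\cW)\le n\cdot N_{2^{-n}}(K,\rho_\infty)^{?}$ — no. The correct single-space inequality, which is \cite[Lemma 4.3 / the proof of Theorem 4.4]{LW}, is that if $\cW$ is a finite open cover of compact metric $(K,\rho_\infty)$ with $\mesh<\varepsilon$, then $\cD(\cW)\le \frac{\log N_{\varepsilon/2}(K,\rho_\infty)}{\log 2}$ is again false in general; what is true and is used in \cite{LW} is an inequality of the form $\cD(\cW)\le \log_2 N_{\varepsilon}(K,\rho_\infty)$ only after choosing $\cW$ cleverly relative to an $\varepsilon$-net. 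Given the constraints, I will structure the proof as: (i) reduce to covers $\cU$ of small mesh; (ii) for such $\cU$ with $\mesh(\cU,\rho)<\varepsilon$, show $\cD(\cU,\rho,F,\delta,\sigma)\le \log_2 N_{\varepsilon}(\Map(\rho,F,\delta,\sigma),\rho_\infty)$ by building, from a maximal $\varepsilon$-separated subset of the map space, a $\cU^d$-compatible map into the nerve complex of the corresponding ball-cover (a simplicial complex whose dimension is at most the order of that cover, which is where a little care is needed) and applying Lemma~\ref{L-compatible map}; (iii) divide by $d_i$, take $\varlimsup_{i\to\infty}$, then $\inf_\delta$, then $\inf_F$, to get $\cD_\Sigma(\cU)\le \frac{1}{\log 2}h^{\varepsilon}_{\Sigma,\infty}(\rho)$; (iv) replace $\varepsilon$ by a sequence $\to 0$, using $\mesh(\cU,\rho)<\varepsilon$ forces $\varepsilon$ not too small only on one side, and normalize by $|\log\varepsilon|$ — here I must be careful that $\cD_\Sigma(\cU)$ does not carry the $1/|\log\varepsilon|$ factor, so the clean conclusion is actually obtained by passing to $\mdim$: take $\sup_\cU$ to get $\mdim_\Sigma(X)\le \frac{1}{\log 2}\inf_{\varepsilon}\frac{h^\varepsilon_{\Sigma,\infty}(\rho)}{1}$...

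The main obstacle, and the step I expect to spend the most care on, is exactly this matching of normalizations: $\mdim_\Sigma$ is defined as a supremum of $\cD_\Sigma(\cU)$ with no scale parameter, whereas $\mdim_{\Sigma,\rM}$ has the $\varliminf_{\varepsilon\to 0}\frac{1}{|\log\varepsilon|}$ factor. The resolution is the standard one from \cite[\S4]{LW}: for a cover $\cU$ of mesh $<\varepsilon$, one does not merely use $\cD(\cU,\rho,F,\delta,\sigma)\le\log_2 N_\varepsilon$, but rather chains covers at dyadic scales and uses subadditivity to get a bound whose leading term carries a $1/|\log\varepsilon|$, so that $\cD_\Sigma(\cU)\le \varliminf_{\varepsilon\to0}\frac{1}{|\log\varepsilon|}h^\varepsilon_{\Sigma,\infty}(\rho)$ for every fixed $\cU$ — this works because $\cD_\Sigma(\cU)$ is a fixed finite number independent of $\varepsilon$, so being bounded above by the $\varliminf$ is legitimate. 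Concretely, for fixed $\cU$ one shows $\cD(\cU,\rho,F,\delta,\sigma)\le \frac{c}{|\log\varepsilon|}\log N_\varepsilon(\Map(\rho,F,\delta,\sigma),\rho_\infty)+o(\dots)$ by refining $\cU$ through $|\log_2\varepsilon|$-many intermediate covers and invoking $\cD(\cU_1\vee\cdots\vee\cU_k)\le\sum\cD(\cU_j)$ together with the single-scale estimate at each step; I will need to import the precise single-scale comparison between $\cD$ of a small-mesh cover and log-separation-number that appears in \cite[proof of Theorem 4.4]{LW} and check it applies verbatim to the restricted cover $\cU^d|_{\Map(\rho,F,\delta,\sigma)}$ on the map space equipped with $\rho_\infty$. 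Once that local estimate is in hand, passing through the three nested infima/limsups in Definitions~\ref{D-sofic top mean dim} and \ref{D-sofic mean metric dim} in the order $i\to\infty$, then $\delta$, then $F$, then $\varepsilon$, and finally $\sup_\cU$ and $\inf_\rho$, yields $\mdim_\Sigma(X)\le\mdim_{\Sigma,\rM}(X)$.
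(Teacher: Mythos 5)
There is a genuine gap, and it sits exactly at the heart of the theorem: the single-scale comparison between $\cD(\cU,\rho,F,\delta,\sigma_i)$ and $\log N_\varepsilon(\Map(\rho,F,\delta,\sigma_i),\rho_\infty)$, normalized by $|\log\varepsilon|$. Your outer bookkeeping (fix $\cU$ and $\rho$, bound $\cD_\Sigma(\cU,\rho)$ by $\mdim_{\Sigma,\rM}(X,\rho)+\theta$ using that $\cD_\Sigma(\cU)$ is a fixed number and $\varepsilon$ may be chosen along a sequence realizing the $\varliminf$, then take $\sup_\cU$ and $\inf_\rho$) is fine, but every mechanism you propose for the core estimate either fails or is deferred to a lemma you cannot state. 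The bound $\cD(\cW)\le\log_2 N_\varepsilon(K,\rho_\infty)$ for a cover of mesh $<\varepsilon$ is false, as you yourself observe; the nerve of a cover by $\varepsilon$-balls centered at a maximal separated set has dimension equal to the \emph{multiplicity} of that ball cover, which is not controlled by $\log N_\varepsilon/|\log\varepsilon|$ and has no relation to mean dimension; and ``chaining dyadic scales with $\cD(\cU_1\vee\cU_2)\le\cD(\cU_1)+\cD(\cU_2)$'' goes the wrong way --- subadditivity bounds the $\cD$ of a \emph{join} from above by a sum, and provides no route to an upper bound on $\cD(\cU,\rho,F,\delta,\sigma)$ carrying a $1/|\log\varepsilon|$ factor. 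The Lindenstrauss--Weiss proof you hope to ``import verbatim'' does not proceed by such chaining at all.

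The missing idea is the measure-theoretic selection argument that the paper adapts from \cite[Theorem 4.2]{LW} (Lemmas~\ref{L-Lipschitz partion of unity}, \ref{L-small image}, \ref{L-to lower dimension}, \ref{L-maps down}). One embeds $\Map(\rho,F,\delta,\sigma_i)$ into $[0,1]^{[d_i]\times\cU}$ by a \emph{Lipschitz} partition of unity $\{f_U\}$ with $\max_U f_U\equiv 1$; the separation bound $N_\varepsilon\le\varepsilon^{-(D+\theta/2)d_i}$ (with $D=\mdim_{\Sigma,\rM}(X,\rho)$) lets one cover the image by that many $\|\cdot\|_\infty$-balls of radius $C\varepsilon$, and a Lebesgue-measure count --- $2^{|\cU|d_i}\,\varepsilon^{-(D+\theta/2)d_i}(2C\varepsilon)^{(D+\theta)d_i}<1$ for $\varepsilon$ small --- produces a point $\xi\in(0,1)^{[d_i]\times\cU}$ whose restriction to every coordinate set of size $\ge(D+\theta)d_i$ avoids the projection of the image. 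This is precisely where the $|\log\varepsilon|$ normalization is consumed: the gain $\varepsilon^{\theta/2}$ per coordinate of slack beats the entropy-type count $\varepsilon^{-(D+\theta/2)d_i}$. One then pushes each coordinate to $\{0,1\}$ outside a $\delta$-neighborhood of $\xi$ (Lemma~\ref{L-maps down}), so the image lands in a union of faces of dimension $\le(D+\theta)d_i$, and the $0/1$-preservation together with $\max_U f_U\equiv 1$ makes the composite $\cU^{d_i}$-compatible (Lemma~\ref{L-to lower dimension}); Lemma~\ref{L-compatible map} then gives $\cD(\cU,\rho,F,\delta,\sigma_i)\le(D+\theta)d_i$. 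Without this mechanism (or an equivalent substitute), your steps (ii)--(iv) do not yield the theorem.
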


The amenable group case of Theorem~\ref{T-top vs metric} was proved by Lidenstrauss and Weiss \cite[Theorem 4.2]{LW}. We adapt their proof to our situation, by
replacing the set $\{(sx)_{s\in H}: x\in X\}\subseteq X^H$ for an approximately left invariant finite subset $H$ of $G$ in their proof (in their case $G=\Zb$ and $H=\{0, \dots, N-1\}$) with the set $\Map(\rho, F, \delta, \sigma)$.

\begin{lemma} \label{L-Lipschitz partion of unity}
Let $\cU$ be a finite open cover of $X$ and $\rho$ be a compatible metric on $X$.
Then there exist Lipschitz functions $f_U: X\rightarrow [0,1]$ vanishing on $X\setminus U$ for each $U\in \cU$ such
that $\max_{U\in \cU}f_U(x)=1$ for every $x\in X$.
\end{lemma}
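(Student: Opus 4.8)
The plan is to construct the functions $f_U$ explicitly from the metric $\rho$, using distances to the complements of the open sets in $\cU$. First I would fix, for each $U \in \cU$, the continuous function $g_U(x) = \rho(x, X\setminus U) = \inf_{y \in X\setminus U}\rho(x,y)$, with the convention $g_U \equiv 1$ (or any positive constant) when $U = X$. Each $g_U$ is $1$-Lipschitz with respect to $\rho$, vanishes precisely on $\overline{X\setminus U} \supseteq X\setminus U$, and is strictly positive on $U$. Since $\cU$ is a cover, for every $x \in X$ there is some $U$ with $x \in U$, hence $g_U(x) > 0$; thus $h(x) := \max_{U \in \cU} g_U(x)$ is strictly positive on all of $X$, and being the maximum of finitely many $1$-Lipschitz functions it is itself $1$-Lipschitz. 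By compactness $h$ attains a positive minimum $c > 0$ on $X$.

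The natural candidate is then $f_U := g_U / h$. This vanishes on $X\setminus U$ since $g_U$ does, takes values in $[0,1]$ since $g_U \le h$, and satisfies $\max_{U \in \cU} f_U(x) = h(x)/h(x) = 1$ for every $x$. The only remaining point is to check that $f_U$ is Lipschitz; this follows because $g_U$ is bounded and Lipschitz, $h$ is Lipschitz and bounded below by $c > 0$, and a quotient of a bounded Lipschitz function by a Lipschitz function bounded away from zero is again Lipschitz. Concretely, for $x, y \in X$,
\[
|f_U(x) - f_U(y)| = \left| \frac{g_U(x)}{h(x)} - \frac{g_U(y)}{h(y)} \right| \le \frac{|g_U(x)-g_U(y)|}{h(x)} + g_U(y)\,\frac{|h(y)-h(x)|}{h(x)h(y)} \le \frac{1}{c}\rho(x,y) + \frac{1}{c^2}\rho(x,y),
\]
using $g_U \le h$ and $g_U(y) \le h(y)$, so $f_U$ is $(c^{-1} + c^{-2})$-Lipschitz.

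I do not expect any genuine obstacle here; the statement is a standard partition-of-unity-type construction, and the only mild care needed is the degenerate case $U = X$ (handled by the constant convention above) and the verification that the denominator $h$ is bounded away from zero, which is immediate from compactness of $X$ and positivity of $h$. The reason the lemma is phrased with $\max_{U}f_U = 1$ rather than $\sum_U f_U = 1$ is presumably that the Lipschitz constants are easier to control this way and the max-normalization is what is needed in the subsequent comparison argument; the construction above delivers exactly that.
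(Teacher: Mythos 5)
Your construction is correct and is essentially the paper's argument: both start from the $1$-Lipschitz distance functions $\rho(\cdot, X\setminus U)$ and exploit that the covering property makes a suitable combination of them strictly positive (hence bounded below on the compact $X$), so that the normalized quotients are Lipschitz, vanish off $U$, and attain the value $1$. The only difference is cosmetic: you divide by $\max_{V\in\cU}\rho(\cdot, X\setminus V)$ directly, whereas the paper divides by $\sum_{V\in\cU}\rho(\cdot, X\setminus V)$ and then applies the truncation $f_U=|\cU|\min(g_U,1/|\cU|)$ to force the maximum to equal $1$; your quotient-rule verification of the Lipschitz property (with any finite constant, the precise value being immaterial) covers the step the paper leaves implicit.
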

\begin{proof}
If some elements of $\cU$ are equal to the whole space $X$, we may set $f_U$ to be the constant function $1$ for the elements $U$ equal to $X$ and the constant function $0$ for those elements $U$ not equal to $X$. Thus we mat assume that all elements of $\cU$ do not equal $X$.
Note that for each $U\in \cU$, the function $\rho(\cdot, X\setminus U): X\rightarrow [0, +\infty)$
is Lipschitz and vanishes on $X\setminus U$.  Furthermore,
$ \sum_{V\in \cU} \rho(x, X\setminus V)>0$ for every $x\in X$. Define
$g_U, f_U: X\rightarrow [0, 1]$ by
$$g_U(x)=\frac{\rho(x, X\setminus U)}{\sum_{V\in \cU} \rho(x, X\setminus V)},$$
and
$$ f_U(x)=|\cU|\min(g_U(x), 1/|\cU|).$$
Then $g_U$ is Lipschitz and vanishes on $X\setminus U$, and hence so is $f_U$. Furthermore, for each $x\in X$, one has
$\sum_{U\in \cU}g_U(x)=1$ and thus $g_U(x)\ge 1/|\cU|$ for some $U\in \cU$. It follows that $f_U(x)=1$ for some $U\in \cU$.
\end{proof}

Let $\rho, \cU$ and $f_U$ be as in Lemma~\ref{L-Lipschitz partion of unity}.
Let $\sigma$ be a map from $G$ to $\Sym(d)$ for some $d\in \Nb$. We define  a  continuous map $\Phi_d: X^{[d]}\rightarrow [0, 1]^{[d]\times \cU}$
by $\Phi_d(\varphi)_{a, U}=f_U(\varphi(a))$ for $a\in [d]$ and $U\in \cU$.

\begin{lemma} \label{L-small image}
Let $\theta>0$, and set $D=\mdim_{\Sigma, \rM}(X, \rho)$. Then there exist a nonempty finite subset $F$ of $G$, $\delta>0$ and $M>0$ such that for any $i\in \Nb$ with $i\ge M$, there exists
$\xi\in (0, 1)^{[d_i]\times \cU}$ such that for any $S\subseteq [d_i]\times \cU$ with $|S|\ge (D+\theta)d_i$,
one has
$$ \xi|_S\not \in \Phi_{d_i}(\Map(\rho, F, \delta, \sigma_i))|_S.$$
\end{lemma}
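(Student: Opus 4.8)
The plan is to convert the hypothesis $D=\mdim_{\Sigma,\rM}(X,\rho)=\varliminf_{\varepsilon\to 0}\frac{1}{|\log\varepsilon|}h^{\varepsilon}_{\Sigma,\infty}(\rho)$ into an exponential bound on a covering number of $\Map(\rho,F,\delta,\sigma_i)$, push that bound through the Lipschitz map $\Phi_{d_i}$ to a covering of $\Phi_{d_i}(\Map(\rho,F,\delta,\sigma_i))$ by few $\ell^\infty$-balls in the cube $[0,1]^{[d_i]\times\cU}$, and then produce $\xi$ by a union-bound argument over the ball centers. Fix a common Lipschitz constant $L>0$ for the functions $f_U$, $U\in\cU$. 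We may assume $D<\infty$, since otherwise there is no $S$ with $|S|\ge(D+\theta)d_i$ and the statement is vacuous.

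First I would choose the parameters. Since the above $\varliminf$ equals $D$, I can pick $\varepsilon\in(0,1)$ that is simultaneously small enough for the later counting estimate --- concretely $2eL|\cU|\varepsilon<D+\theta$ and $\big(\frac{2eL|\cU|}{D+\theta}\big)^{D+\theta}\varepsilon^{\theta/2}<1$, both valid for all sufficiently small $\varepsilon$ --- and satisfies $\frac{1}{|\log\varepsilon|}h^{\varepsilon}_{\Sigma,\infty}(\rho)<D+\theta/2$, which holds for arbitrarily small $\varepsilon$. Writing $h^{\varepsilon}_{\Sigma,\infty}(\rho)=\inf_{F,\delta}h^{\varepsilon}_{\Sigma,\infty}(\rho,F,\delta)$, I then fix a nonempty finite $F\subseteq G$ and $\delta>0$ with $h^{\varepsilon}_{\Sigma,\infty}(\rho,F,\delta)<(D+\theta/2)|\log\varepsilon|$, and (using that this quantity is a $\varlimsup_i$) take $M$ so large that $N_\varepsilon(\Map(\rho,F,\delta,\sigma_i),\rho_\infty)\le\varepsilon^{-(D+\theta/2)d_i}$ for all $i\ge M$. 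If $\Map(\rho,F,\delta,\sigma_i)$ is empty for all large $i$, then $\Phi_{d_i}(\Map(\rho,F,\delta,\sigma_i))$ is empty and the lemma is trivial, so I assume otherwise.

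Now fix $i\ge M$. Let $E$ be a maximal $(\rho_\infty,\varepsilon)$-separated subset of $\Map(\rho,F,\delta,\sigma_i)$; it is $\varepsilon$-spanning and $|E|\le\varepsilon^{-(D+\theta/2)d_i}$. Since each $f_U$ is $L$-Lipschitz, $\rho_\infty(\varphi,\psi)\le\varepsilon$ forces $\|\Phi_{d_i}(\varphi)-\Phi_{d_i}(\psi)\|_\infty\le L\varepsilon$, so $\Phi_{d_i}(\Map(\rho,F,\delta,\sigma_i))$ lies in the union of the closed $\ell^\infty$-balls of radius $L\varepsilon$ about the points of $C:=\Phi_{d_i}(E)$. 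Choose the coordinates $\xi_{a,U}$ independently and uniformly in $(0,1)$. For $c\in C$ put $T_c(\xi)=\{(a,U):|\xi_{a,U}-c_{a,U}|\le L\varepsilon\}$; each of the $N:=d_i|\cU|$ coordinates lies in $T_c(\xi)$ with probability at most $2L\varepsilon$, independently, so for $m:=\lceil(D+\theta)d_i\rceil$, a union bound over the $m$-subsets together with $\binom{N}{m}\le(eN/m)^m$ and $2eL|\cU|\varepsilon<D+\theta$ gives
\[
\Pr\big[\,|T_c(\xi)|\ge m\,\big]\le\binom{N}{m}(2L\varepsilon)^m\le\Big(\frac{2eL|\cU|\varepsilon}{D+\theta}\Big)^{(D+\theta)d_i}.
\]
A further union bound over the at most $\varepsilon^{-(D+\theta/2)d_i}$ elements of $C$ bounds the probability that $|T_c(\xi)|\ge m$ for some $c\in C$ by $\big[\big(\frac{2eL|\cU|}{D+\theta}\big)^{D+\theta}\varepsilon^{\theta/2}\big]^{d_i}<1$, so there is $\xi\in(0,1)^{[d_i]\times\cU}$ with $|T_c(\xi)|<m$ for every $c\in C$. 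Finally, given $S$ with $|S|\ge(D+\theta)d_i$ (so $|S|\ge m>|T_c(\xi)|$) and $\varphi\in\Map(\rho,F,\delta,\sigma_i)$, choose $c\in C$ with $\Phi_{d_i}(\varphi)$ in the ball about $c$ and $(a,U)\in S\setminus T_c(\xi)$; then $|\xi_{a,U}-c_{a,U}|>L\varepsilon\ge|c_{a,U}-\Phi_{d_i}(\varphi)_{a,U}|$ forces $\xi_{a,U}\ne\Phi_{d_i}(\varphi)_{a,U}$, whence $\xi|_S\notin\Phi_{d_i}(\Map(\rho,F,\delta,\sigma_i))|_S$.

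The one essential point --- and the only place the metric mean dimension hypothesis is really used --- is the last counting step: the surplus power $\varepsilon^{\theta/2}$, coming from the gap between the covering-number exponent $D+\theta/2$ and the agreement-size exponent $D+\theta$, is exactly what lets the union bound over the exponentially many ball centers stay below $1$ once $\varepsilon$ is small. This is the sofic counterpart of the combinatorial estimate behind \cite[Theorem 4.2]{LW}; the transport through $\Phi_{d_i}$ and the minor bookkeeping with the non-integer quantity $(D+\theta)d_i$ are routine.
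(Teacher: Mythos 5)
Your proof is correct and follows essentially the same route as the paper: extract from $\mdim_{\Sigma,\rM}(X,\rho)=D$ a covering of $\Map(\rho,F,\delta,\sigma_i)$ by at most $\varepsilon^{-(D+\theta/2)d_i}$ balls of radius $\varepsilon$, transport it through the Lipschitz map $\Phi_{d_i}$ to $\|\cdot\|_\infty$-balls in $[0,1]^{[d_i]\times\cU}$, and show by a measure (union-bound) estimate that the set of bad $\xi$ has measure less than $1$, the decisive point in both arguments being the surplus factor $\varepsilon^{\theta/2}$ coming from the gap between the exponents $D+\theta/2$ and $D+\theta$. The only differences are cosmetic: you phrase the Lebesgue-measure estimate probabilistically and union-bound over ball centers and $m$-element coordinate sets with a binomial estimate, where the paper bounds the measure for each subset $S$ and sums over all $2^{d_i|\cU|}$ subsets.
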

\begin{proof} Denote by $C$ the maximum of the Lipschitz constants of $f_U$ for all $U\in \cU$.
Then
$$ \|\Phi_d(\varphi)-\Phi_d(\psi)\|_\infty\le C \rho_\infty(\varphi, \psi)$$
for all $d\in \Nb$ and $\varphi, \psi\in X^{[d]}$.

Take $\varepsilon>0$ small enough, which we shall determine in a moment, satisfying
$$ \frac{h^\varepsilon_{\Sigma, \infty}(\rho)}{|\log \varepsilon|}<D+\theta/3.$$
Then we can find a nonempty finite subset $F$ of $G$, $\delta>0$, and $M>0$ such that
for every $i\in \Nb$ with $i>M$, we have
$$ \frac{1}{d_i|\log \varepsilon|}\log N_\varepsilon(\Map(\rho, F, \delta, \sigma_i), \rho_\infty)\le D+\theta/2, $$
that is,
$$ N_\varepsilon(\Map(\rho, F, \delta, \sigma_i), \rho_\infty)\le \varepsilon^{-(D+\theta/2)d_i}.$$
It follows that we can cover $\Phi_{d_i}(\Map(\rho, F, \delta, \sigma_i))$ using $\varepsilon^{-(D+\theta/2)d_i}$ open balls in the $\|\cdot \|_\infty$ norm with radius $\varepsilon C$.

Denote by $\mu$ the Lebesgue measure on $[0, 1]^{[d_i]\times \cU}$. For each $S\subseteq [d_i]\times \cU$, the set
$\Phi_{d_i}(\Map(\rho, F, \delta, \sigma_i))|_S\subseteq [0, 1]^S$ can be covered using $\varepsilon^{-(D+\theta/2)d_i}$ open balls in the $\|\cdot \|_\infty$ norm with radius $\varepsilon C$, and hence has Lebesgue measure at most $\varepsilon^{-(D+\theta/2)d_i}(2\varepsilon C)^{|S|}$.
Thus,
$$\mu(\{\xi\in [0, 1]^{[d_i]\times \cU}: \xi|_S\in \Phi_{d_i}(\Map(\rho, F, \delta, \sigma_i))|_S\})\le \varepsilon^{-(D+\theta/2)d_i}(2\varepsilon C)^{|S|}.$$
Therefore, the set of $\xi\in [0,1]^{[d_i]\times \cU}$ satisfying $\xi|_S\in \Phi_{d_i}(\Map(\rho, F, \delta, \sigma_i))|_S$ for some
$S\subseteq [d_i]\times \cU$ with $|S|\ge (D+\theta)d_i$ has Lebesgue measure at most
\begin{align*}
2^{|[d_i]\times \cU|}\varepsilon^{-(D+\theta/2)d_i}(2\varepsilon C)^{(D+\theta)d_i}= (2^{|\cU|}\varepsilon^{\theta/2}(2C)^{D+\theta})^{d_i}<1,
\end{align*}
when we take $\varepsilon$ to be small enough. Thus we can find $\xi$ with desired property.
\end{proof}

\begin{lemma} \label{L-to lower dimension}
Let $F$ be a nonempty finite subset of $G$ and $\delta>0$.
Let $\sigma$ be a map from $G$ to $\Sym(d)$ for some $d\in \Nb$.
Let $\Psi$ be a continuous map from $\Phi_d(\Map(\rho, F, \delta, \sigma))$ to $[0, 1]^{[d]\times \cU}$. Suppose that for any $\varphi \in \Map(\rho, F, \delta, \sigma)$ and
any $(a, U)\in [d]\times \cU$, if $\Phi_d(\varphi)_{a, U}=0$ or $1$, then $\Psi(\Phi_d(\varphi))_{a, U}=0$ or $1$ accordingly.
Then $\Psi\circ \Phi_d$
is $\cU^d|_{\Map(\rho, F, \delta, \sigma)}$-compatible.
\end{lemma}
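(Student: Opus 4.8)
The plan is to unwind the definition of $\cU^d|_{\Map(\rho,F,\delta,\sigma)}$-compatibility for the (clearly continuous) map $\Psi\circ\Phi_d$ and produce, for each target point $w\in[0,1]^{[d]\times\cU}$, a single item of $\cU^d$ containing the whole fibre. Concretely, I want to show: for every $w\in[0,1]^{[d]\times\cU}$ there are $U_1,\dots,U_d\in\cU$, depending only on $w$, such that every $\psi\in\Map(\rho,F,\delta,\sigma)$ with $\Psi(\Phi_d(\psi))=w$ satisfies $\psi(a)\in U_a$ for all $a\in[d]$; then $(\Psi\circ\Phi_d)^{-1}(w)$ lies in the item $(U_1\times\cdots\times U_d)\cap\Map(\rho,F,\delta,\sigma)$ of $\cU^d|_{\Map(\rho,F,\delta,\sigma)}$, which is exactly what $\cU^d|_{\Map(\rho,F,\delta,\sigma)}$-compatibility demands.

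First I would fix $w$ and dispose of the trivial case: if no $\psi\in\Map(\rho,F,\delta,\sigma)$ maps to $w$, the fibre is empty and lies in any item. Otherwise pick one such $\psi_0$. The second step is to choose the items $U_a$. Fixing once and for all a linear order on the finite set $\cU$, I would take $U_a$ to be the least $U\in\cU$ with $w_{a,U}=1$; this set is nonempty because, by Lemma~\ref{L-Lipschitz partion of unity}, $\max_{U\in\cU}f_U(\psi_0(a))=1$, so some $U$ has $\Phi_d(\psi_0)_{a,U}=f_U(\psi_0(a))=1$, and then the hypothesis on $\Psi$ gives $w_{a,U}=\Psi(\Phi_d(\psi_0))_{a,U}=1$. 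Note that $U_a$ depends only on $w$ and $a$, as required.

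Finally I would check the fibre condition: if $\psi\in\Map(\rho,F,\delta,\sigma)$ satisfies $\Psi(\Phi_d(\psi))=w$, then $\Psi(\Phi_d(\psi))_{a,U_a}=w_{a,U_a}=1\neq 0$, so $\Phi_d(\psi)_{a,U_a}\neq 0$ --- otherwise the hypothesis on $\Psi$ would force $\Psi(\Phi_d(\psi))_{a,U_a}=0$, a contradiction. Hence $f_{U_a}(\psi(a))\neq 0$, and since $f_{U_a}$ vanishes on $X\setminus U_a$ (Lemma~\ref{L-Lipschitz partion of unity}) we get $\psi(a)\in U_a$, which finishes the argument. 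There is no serious obstacle here; the only point that needs attention is that the hypothesis on $\Psi$ is a one-directional implication, so one uses the ``value $1$'' half to locate a usable coordinate $U_a$ and the contrapositive of the ``value $0$'' half to trap $\psi(a)$ inside $U_a$.
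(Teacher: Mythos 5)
Your proof is correct and follows essentially the same route as the paper: use the partition-of-unity normalization $\max_U f_U=1$ together with the ``value $1$'' half of the hypothesis to find, depending only on the target point and $a$, a coordinate $U_a$ where the image is $1$, and then use the contrapositive of the ``value $0$'' half to conclude $f_{U_a}(\psi(a))>0$, hence $\psi(a)\in U_a$. The only cosmetic differences (fixing a linear order on $\cU$ to make the choice canonical, and treating empty fibres separately) do not change the argument.
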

\begin{proof}
Let $\varphi \in \Map(\rho, F, \delta, \sigma)$ and $a\in [d]$. By the choice of $\{f_U\}_{U\in \cU}$ there exists some $U\in \cU$ such that $\Phi_d(\varphi)_{a, U}=f_U(\varphi(a))=1$. By our assumption on $\Psi$ we then have $\Psi(\Phi_d(\varphi))_{a, U}=1$.

Let $\zeta\in \Psi(\Phi_d(\Map(\rho, F, \delta, \sigma)))$ and $a\in [d]$. It suffices to show that
there exists some $U\in \cU$ such that one has $\varphi(a)\in U$ for every $\varphi \in \Map(\rho, F, \delta, \sigma)$ satisfying
$\Psi(\Phi_d(\varphi))=\zeta$.
By the above paragraph we can find some $U\in \cU$ such that $\zeta_{a, U}=1$.
Let $\varphi\in \Map(\rho, F, \delta, \sigma)$ with $\Psi(\Phi_d(\varphi))=\zeta$. By our assumption on $\Psi$ we have $f_U(\varphi(a))=\Phi_d(\varphi)_{a, U}>0$.
Since $f_U$ vanishes on $X\setminus U$, we get $\varphi(a)\in U$.
\end{proof}

\begin{lemma} \label{L-maps down}
Let $W$ be a finite set and $Z$ a closed subset of $[0, 1]^W$. Let $m\in \Nb$ and $\xi\in (0, 1)^W$ such that for every $S\subseteq W$ with $|S|\ge m$ one has
$\xi|_S\not \in Z|_S$. Then there exists a continuous map $\Psi$ from $Z$ into $[0, 1]^W$ such that $\dim(\Psi(Z))\le m$ and  for any $z \in Z$ and
any $w\in W$, if $z_w=0$ or $1$, then $\Psi(z)_w=0$ or $1$ accordingly.
\end{lemma}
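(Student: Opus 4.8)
The plan is to realise $\Psi$ as a composition $r_{|W|-m}\circ\cdots\circ r_1$ of radial retractions that successively push more of the coordinates into $\{0,1\}$, so that the final image lands in the $m$-skeleton of the cube $[0,1]^W$. I would first dispose of the degenerate cases: if $Z=\emptyset$ take $\Psi$ empty; if $m=0$ then choosing $S=\emptyset$ in the hypothesis forces $Z=\emptyset$; and if $|W|\le m$ then $\dim Z\le\dim[0,1]^W=|W|\le m$, so $\Psi=\mathrm{id}_Z$ works. So assume $\emptyset\ne Z$ and $1\le m<|W|$.

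For $z\in[0,1]^W$ I write $E(z)=\{w\in W:z_w=\xi_w\}$ and $B(z)=\{w\in W:z_w\in\{0,1\}\}$, two disjoint sets since $\xi\in(0,1)^W$. The hypothesis is exactly that $|E(z)|\le m-1$ for every $z\in Z$, since $|E(z)|\ge m$ would give $\xi|_S=z|_S\in Z|_S$ for any $S\subseteq E(z)$ of size $m$. For $T\subseteq W$ and $\eta\in\{0,1\}^T$ let $G_{T,\eta}=\{z:z_v=\eta_v\ \forall v\in T\}$, a closed face of codimension $|T|$, with centre $c_{T,\eta}$ (equal to $\eta$ on $T$ and to $\xi$ off $T$), so $E(c_{T,\eta})=W\setminus T$; let $r_{T,\eta}$ be the standard continuous radial retraction of $G_{T,\eta}$ onto $\partial G_{T,\eta}$ from the interior point $c_{T,\eta}$, defined on $G_{T,\eta}\setminus\{c_{T,\eta}\}$. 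I will use two elementary facts: $r_{T,\eta}$ moves each free coordinate $z_v$ ($v\notin T$) monotonically away from $\xi_v$, so $r_{T,\eta}(z)_v=\xi_v\iff z_v=\xi_v$ and hence $E(r_{T,\eta}(z))=E(z)$; and if $z$ has some free coordinate already in $\{0,1\}$ (that is $B(z)\not\subseteq T$) then $r_{T,\eta}(z)=z$.

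Then I would set $Z_0=Z$ and, for $j=1,\dots,|W|-m$, define $r_j$ on $Z_{j-1}$ with $Z_j:=r_j(Z_{j-1})$, maintaining the invariant that $Z_{j-1}$ avoids $\Delta_m:=\{z:|E(z)|\ge m\}$ and that $|B(z)|\ge j-1$ for all $z\in Z_{j-1}$. Given $z\in Z_{j-1}$, pick any $T\subseteq B(z)$ with $|T|=j-1$ and set $r_j(z)=r_{T,z|_T}(z)$; this makes sense because $c_{T,z|_T}$ has $|E|=|W|-(j-1)\ge m$, so it lies in $\Delta_m$ and thus outside $Z_{j-1}$. This is independent of the choice of $T$: if $|B(z)|\ge j$ then every admissible $T$ leaves a free coordinate of $z$ in $\{0,1\}$, so $r_{T,z|_T}(z)=z$; if $|B(z)|=j-1$ then $T=B(z)$. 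Continuity follows from the pasting lemma applied to the finite closed cover $\{Z_{j-1}\cap G_{T,\eta}:|T|=j-1,\ \eta\in\{0,1\}^T\}$ of $Z_{j-1}$, on each member of which $r_j$ equals the continuous map $r_{T,\eta}$, the candidates being forced to agree (and equal the identity) on overlaps by the second elementary fact. The first elementary fact yields $E(r_j(z))=E(z)$, so $Z_j$ still avoids $\Delta_m$; and $r_j(z)$ has at least $j$ coordinates in $\{0,1\}$ (trivially if $|B(z)|\ge j$, and otherwise the $j-1$ frozen coordinates of $G_{B(z),z|_{B(z)}}$ plus at least one from landing on its boundary), so the invariant persists. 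Each $r_j$ preserves every $\{0,1\}$-coordinate value of its argument, hence so does $\Psi:=r_{|W|-m}\circ\cdots\circ r_1$; and $\Psi(Z)=Z_{|W|-m}$ consists of points with at least $|W|-m$ coordinates in $\{0,1\}$, i.e.\ lies in a finite union of faces of $[0,1]^W$ each homeomorphic to $[0,1]^m$, so $\dim\Psi(Z)\le m$ by the finite sum theorem for covering dimension. This is exactly the assertion.

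The step I expect to need the most care is the well-definedness and continuity of each $r_j$ at points lying on several faces $G_{T,\eta}$ at once: a priori different choices of $T$ might give different values, and the resolution is that such a point has a free coordinate of each relevant face already in $\{0,1\}$, so every admissible radial retraction fixes it. This in turn rests on the invariant that $Z$ and all the $Z_j$ avoid $\Delta_m$, keeping the face centres — the only points where radial retraction could fail — away from the sets being mapped; that avoidance is the one place the hypothesis on $\xi$ is genuinely used.
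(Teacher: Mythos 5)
Your proof is correct, but it takes a genuinely different route from the paper's. The paper argues in one stroke: by compactness of $Z$ and the hypothesis on $\xi$ there is a single $\delta>0$ such that every $z\in Z$ has at most $m$ coordinates in $[\xi_w-\delta,\xi_w+\delta]$, and then $\Psi$ is defined coordinatewise by continuous maps $f_w:[0,1]\to[0,1]$ collapsing $[0,\xi_w-\delta]$ to $0$ and $[\xi_w+\delta,1]$ to $1$; continuity and the $\{0,1\}$-preservation are immediate, and the image lands in the union $Y=\bigcup_{|S|\le m}Y_S$ of faces with at most $m$ free coordinates, so $\dim\Psi(Z)\le m$ by the finite sum theorem. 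You reach the same target set $Y$ by composing $|W|-m$ radial retractions from the face centres $c_{T,\eta}$ (which agree with $\xi$ on the free coordinates), using the hypothesis through the invariant that the successive images avoid $\Delta_m=\{z:|E(z)|\ge m\}$ and hence avoid those centres; your well-definedness and pasting-lemma argument for each $r_j$ checks out, as do the facts that $E$ is preserved and that the number of $\{0,1\}$-coordinates increases by at least one per step. This skeleton-projection scheme is essentially the original Lindenstrauss--Weiss argument behind \cite[Theorem 4.2]{LW}, which the paper explicitly mentions and then replaces by the thresholding proof above. What each approach buys: the paper's $\Psi$ is a single explicit formula with no case analysis and no continuity issues, while yours avoids the uniform-$\delta$ compactness step, makes the geometric role of the hypothesis (keeping $Z$ away from the projection centres) transparent, and has the extra feature that $\Psi$ fixes every point that already has at least $|W|-m$ coordinates in $\{0,1\}$. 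The degenerate cases you treat separately ($Z=\emptyset$, $|W|\le m$) are handled correctly; note only that $m=0$ is excluded by $m\in\Nb$, so that discussion is superfluous.
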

\begin{proof} This can be proved as  in the proof of \cite[Theorem 4.2]{LW}. We give a slightly different proof.

For each $S\subseteq W$, denote by $Y_S$ the subset of $[0, 1]^W$ consisting of elements whose coordinate at any $w\in W\setminus S$ is either $0$ or $1$. Then $Y_S$ is a closed subset of $[0, 1]^W$ with dimension $|S|$.
Set $Y=\bigcup_{|S|\le m}Y_S$. Since the union of finitely many closed subsets of dimension at most $m$ has dimension at most $m$ \cite[page 30 and Theorem V.8]{HW}, $Y$ has dimension at most $m$.

As $Z$ is compact, by the assumption on $\xi$ we can find $\delta>0$ such that $0<\xi_w-\delta<\xi_w+\delta<1$ for every $w\in W$ and the set
of $w\in W$ satisfying $\xi_w-\delta\le z_w\le \xi_w+\delta$ has cardinality at most $m$ for every $z\in Z$. For each $w\in W$, take a continuous map
$f_w:[0, 1]\rightarrow [0, 1]$ sending $[0, \xi_w-\delta]$ to $0$, and $[\xi_w+\delta, 1]$ to $1$.

Define a continuous map
$\Psi$ from $[0, 1]^W$ into itself by setting the coordinate of $\Psi(x)$ at $w\in W$ to be $f_w(x_w)$. For any $x\in [0, 1]^W$ and $w\in W$,
if $x_w=0$ or $1$, then clearly $\Psi(z)_w=0$ or $1$ accordingly. From the choice of $\delta$ and $f_w$ it is also clear that $\Psi(Z)\subseteq Y$.
Thus $\dim(\Psi(Z))\le \dim(Y)\le m$.
\end{proof}

We are ready to prove Theorem~\ref{T-top vs metric}.

\begin{proof}[Proof of Theorem~\ref{T-top vs metric}]
Let $\cU$ be a finite open cover of $X$, $\rho$ be a compatible metric on $X$, and $\theta>0$. Then we have $f_U$ as in Lemma~\ref{L-Lipschitz partion of unity}. For each $i\in \Nb$, define a  continuous map $\Phi_{d_i}: X^{[d_i]}\rightarrow [0, 1]^{[d_i]\times \cU}$
by $\Phi_{d_i}(\varphi)_{a, U}=f_U(\varphi(a))$ for $a\in [d_i]$ and $U\in \cU$.
Take $F, \delta, M, i$ and $\xi$ as in Lemma~\ref{L-small image}. By Lemmas~\ref{L-maps down} and \ref{L-to lower dimension} we can find a continuous map
$\Psi: \Phi_{d_i}(\Map(\rho, F, \delta, \sigma))\rightarrow [0, 1]^{[d_i]\times \cU}$ such that $\Psi\circ \Phi_{d_i}$ is $\cU^{d_i}|_{\Map(\rho, F, \delta, \sigma_i)}$-compatible and $\dim(\Psi(\Phi_{d_i}(\Map(\rho, F, \delta, \sigma))))\le (\mdim_{\Sigma, \rM}(X, \rho)+\theta)d_i$. From Lemma~\ref{L-compatible map}
we get $\cD(\cU, \rho, F, \delta, \delta_i)\le (\mdim_{\Sigma, \rM}(X, \rho)+\theta)d_i$. It follows that
$$ \cD_\Sigma(\cU)=\cD_\Sigma(\cU, \rho)\le \cD_\Sigma(\cU, \rho, F, \delta)\le \mdim_{\Sigma, \rM}(X, \rho)+\theta.$$
Since $\cU$ is an arbitrary finite open cover of $X$ and $\theta$ is an arbitrary positive number, we get $\mdim_\Sigma(X)\le \mdim_{\Sigma, \rM}(X, \rho)$.
As $\rho$ is an arbitrary compatible metric on $X$, we get $\mdim_\Sigma(X)\le \mdim_{\Sigma, \rM}(X)$.
\end{proof}

The Pontrjagin-Schnirelmann theorem \cite{PS} \cite[page 80]{Nagata}
says that for any compact metrizable space $Z$, the  dimension $\dim Z$ of $Z$ is equal to the minimal value of $\underline{\dim}_B(Z, \rho)$ for $\rho$ ranging over compatible metrics on $Z$. Since $\mdim_\Sigma(X)$ and $\mdim_{\Sigma, \rM}(X, \rho)$ are dynamical analogues of  $\dim(X)$ and $\underline{\dim}_B(X, \rho)$ respectively, it is natural to ask

\begin{question} \label{Q-top vs metric}
Let a countably infinite sofic group $G$ act continuously on a compact metrizable space $X$. Then is there any compatible metric $\rho$ on $X$ satisfying
$$ \mdim_\Sigma(X)=\mdim_{\Sigma, \rM}(X, \rho)?$$
\end{question}

Question~\ref{Q-top vs metric} was answered affirmatively by Lindenstrauss in the case $G=\Zb$ and $X$ has a nontrivial minimal factor \cite[Theorem 4.3]{Lindenstrauss}.

\section{Bernoulli Shifts} \label{S-shifts}

In this section we discuss sofic mean dimension of the Bernoulli shifts and their factors, proving Theorems~\ref{T-cube} and \ref{T-factor has positive dim}. Throughout this section $G$ will be a countable sofic group, and
$\Sigma$ will be a fixed sofic approximation sequence of $G$.

For any compact metrizable space $Z$, we have the left shift action $\alpha$ of $G$ on $Z^G$ given by $(sx)_t=x_{s^{-1}t}$ for all $x\in Z^G$ and $s, t\in G$.

\begin{theorem} \label{T-cube}
Let $Z$ be a compact metrizable space, and consider the left shift action of a countable sofic group $G$ on $X=Z^G$. Then
$$ \mdim_\Sigma(X)\le \mdim_{\Sigma, \rM}(X)\le \dim(Z).$$
If furthermore $Z$ contains a copy of $[0, 1]^n$ for every natural number $n\le \dim(Z)$ (for example, $Z$ could be any polyhedron or the Hilbert cube), then
$$\mdim_\Sigma(X)=\mdim_{\Sigma, \rM}(X)=\dim(Z).$$
\end{theorem}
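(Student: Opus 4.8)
The plan is to prove the chain of inequalities and then the equality clause. The left inequality $\mdim_\Sigma(X)\le\mdim_{\Sigma,\rM}(X)$ is immediate from Theorem~\ref{T-top vs metric}, so the work is to show $\mdim_{\Sigma,\rM}(X)\le\dim Z$ and, under the extra hypothesis on $Z$, that $\mdim_\Sigma(X)\ge\dim Z$.

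For the upper bound $\mdim_{\Sigma,\rM}(X)\le\dim Z$, I would first reduce to the case $\dim Z<\infty$ (otherwise there is nothing to prove). Fix a compatible metric $\rho_Z$ on $Z$ with $\diam(Z,\rho_Z)\le 1$, enumerate $G=\{s_1,s_2,\dots\}$, and put $\rho(x,y)=\sum_{n\ge 1}2^{-n}\rho_Z(x_{s_n},y_{s_n})$, a compatible metric on $X=Z^G$. Given $\varepsilon>0$, choose $k$ with $2^{-k}<\varepsilon/2$; then two points of $X$ are $\varepsilon$-close once their first $k$ coordinates are $(\varepsilon/2)$-close in $\rho_Z$. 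The key geometric point is that for \emph{any} map $\varphi\colon[d]\to X=Z^G$ (equivariance is not even needed here), the $\rho_\infty$-separation of a set of such maps is controlled coordinate-by-coordinate: an $(\rho_\infty,\varepsilon)$-separated set, restricted through the finitely many ``evaluation at $s_n$, $n\le k$'' maps $[d]\to Z$, injects into a product of $(\rho_Z,\varepsilon/2)$-separated subsets of $Z^{[d]}$ (one factor for each of the $kd$ relevant coordinates). Hence
\[
N_\varepsilon(\Map(\rho,F,\delta,\sigma_i),\rho_\infty)\le \big(N_{\varepsilon/2}(Z,\rho_Z)\big)^{k d_i},
\]
so $h^\varepsilon_{\Sigma,\infty}(\rho)\le k\log N_{\varepsilon/2}(Z,\rho_Z)$. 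Dividing by $|\log\varepsilon|$ and letting $\varepsilon\to 0$, the factor $k$ grows like $\log(1/\varepsilon)/\log 2$, and one gets $\mdim_{\Sigma,\rM}(X,\rho)\le \varliminf_{\varepsilon\to 0}\frac{\log N_{\varepsilon/2}(Z,\rho_Z)}{|\log(\varepsilon/2)|}=\underline{\dim}_B(Z,\rho_Z)$. Finally, running this over all compatible metrics $\rho_Z$ on $Z$ and invoking the Pontrjagin--Shnirelmann theorem (quoted after Theorem~\ref{T-top vs metric}), $\inf_{\rho_Z}\underline{\dim}_B(Z,\rho_Z)=\dim Z$, which gives $\mdim_{\Sigma,\rM}(X)\le\dim Z$.

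For the lower bound under the hypothesis that $Z\supseteq[0,1]^n$ for all $n\le\dim Z$: it suffices to treat $n<\infty$ finite and show $\mdim_\Sigma(X)\ge n$ for every such $n$ (if $\dim Z=\infty$ this forces $\mdim_\Sigma(X)=\infty$). Since $[0,1]^n\hookrightarrow Z$ is $G$-equivariantly embedded as a subshift when we pass to $([0,1]^n)^G\subseteq Z^G$, Proposition~\ref{P-subspace top} reduces us to proving $\mdim_\Sigma(([0,1]^n)^G)\ge n$; and since $([0,1]^n)^G\cong ([0,1]^G)^n$ with the product $G$-action, it is enough (not via Proposition~\ref{P-product}, which goes the wrong way, but by a direct argument) to handle $Z=[0,1]$ and show $\mdim_\Sigma([0,1]^G)\ge 1$, then observe the $n$-fold construction multiplies the estimate by $n$. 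For $Z=[0,1]$: take the open cover $\cU$ of $[0,1]^G$ pulled back from the standard cover of $[0,1]$ by $U_0=[0,3/4)$, $U_1=(1/4,1]$ via the coordinate $x\mapsto x_{e_G}$. Fix $F,\delta,\sigma_i$. The natural map $[0,1]^G\to\Map(\rho,F,\delta,\sigma_i)$ is \emph{not} available for a general sofic group, so instead I would build a large-dimensional ``cube'' inside $\Map(\rho,F,\delta,\sigma_i)$ directly: using the sofic approximation, identify a subset $\cC\subseteq[d_i]$ of density close to $1$ and a rule $c\mapsto$ (a choice of coordinate in $G$) so that the map $[0,1]^{\cC}\to X^{[d_i]}$, $(t_c)_c\mapsto\varphi$ with $\varphi(c)$ the configuration that is $t_c$ at $e_G$ and (say) $0$ elsewhere, extended arbitrarily on $[d_i]\setminus\cC$ so as to land in $\Map(\rho,F,\delta,\sigma_i)$. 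This is the map-space analogue of the construction in Lemma~\ref{L-top mean dim lower bound}, but here we do not need a Rokhlin tower: only that $\sigma_i$ is approximately equivariant, so that a map determined by ``placing a chosen value at a base point and zeros elsewhere, shifted along the $\sigma_i$-orbit'' is approximately $G$-equivariant. The cover $\cU^{d_i}$ restricted to the image is, along the $\cC$-coordinates, isomorphic to the pullback of the $[0,1]$-cover to $[0,1]^{\cC}$, and $\cD$ of that equals $|\cC|\ge(1-o(1))d_i$ by the classical computation that the standard two-set cover of $[0,1]^m$ has $\cD=m$. Hence $\cD_\Sigma(\cU)\ge\varlimsup_i|\cC|/d_i\ge 1$, giving $\mdim_\Sigma([0,1]^G)\ge 1$; for $Z=[0,1]^n$ replace the base-point value by an $n$-cube's worth, yielding $\cD_\Sigma(\cU^{(n)})\ge n$, so $\mdim_\Sigma(X)\ge n$. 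Combined with the upper bound, $\mdim_\Sigma(X)=\mdim_{\Sigma,\rM}(X)=\dim Z$.

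The main obstacle is the lower bound: for a general (non-amenable) sofic group there is no canonical $G$-equivariant copy of $X$ inside the map space, so one must verify \emph{by hand} that the ``place a value at the base point, zeros elsewhere'' maps $[d_i]\to X$ built from $\sigma_i$ actually lie in $\Map(\rho,F,\delta,\sigma_i)$ for large $i$ — this is where condition~(1) of the sofic approximation sequence (approximate multiplicativity) is essential, exactly as in the $\varphi_x$ construction of Lemma~\ref{L-top mean dim lower bound}, and one must be careful that the arbitrary extension on the small complementary set $[d_i]\setminus\cC$ does not spoil the $\rho_2$-estimate — it does not, because that set has vanishing density and $\diam(X,\rho)<\infty$. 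The rest (the covering-dimension bookkeeping for product covers, the Pontrjagin--Shnirelmann step) is routine given the lemmas already in the paper.
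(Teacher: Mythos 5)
Your overall skeleton (first inequality from Theorem~\ref{T-top vs metric}, reduction of the lower bound to $([0,1]^n)^G$ via Proposition~\ref{P-subspace top}, Pontrjagin--Shnirelmann at the end) matches the paper, but both of your main estimates have genuine gaps. For the upper bound, your coordinatewise count with the product metric $\rho(x,y)=\sum_n 2^{-n}\rho_Z(x_{s_n},y_{s_n})$ gives at best $N_\varepsilon(\Map(\rho,F,\delta,\sigma_i),\rho_\infty)\le (N_{c\varepsilon}(Z,\rho_Z))^{k d_i}$ with $k\approx\log_2(1/\varepsilon)$, hence $h^\varepsilon_{\Sigma,\infty}(\rho)\le k\log N_{c\varepsilon}(Z,\rho_Z)$. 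When you divide by $|\log\varepsilon|$ the factor $k$ does not disappear: you are left with roughly $\frac{1}{\log 2}\log N_{c\varepsilon}(Z,\rho_Z)$, which tends to $+\infty$ for every infinite $Z$; your displayed conclusion $\mdim_{\Sigma,\rM}(X,\rho)\le\varliminf_{\varepsilon\to0}\log N_{\varepsilon/2}(Z,\rho_Z)/|\log(\varepsilon/2)|$ silently drops $k$. The number of coordinates you must control per element of $[d_i]$ grows like $|\log\varepsilon|$ and exactly cancels the normalization in the definition of $\mdim_{\Sigma,\rM}$, so the bound $\dim Z$ cannot be reached without using approximate equivariance (which you explicitly decline to use). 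The paper's route is to work with the single-coordinate dynamically generating pseudometric $\rho'(x,y)=\rho_Z(x_{e_G},y_{e_G})$, for which the count has exponent $d_i$ rather than $kd_i$ (Lemma~\ref{L-shift upper bound}), and the step that makes this legitimate is Lemma~\ref{L-allow pseudometric}/Proposition~\ref{P-allow pseudometric}, where equivariance of the maps in $\Map$ is used in earnest; your proposal supplies no substitute for that step.

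For the lower bound, the coding you describe does not produce elements of the map space. If $\varphi(c)\in[0,1]^G$ is the configuration equal to $t_c$ at $e_G$ and $0$ elsewhere, then for $s\in F\setminus\{e_G\}$ the point $\alpha_s\varphi(c)$ has its nonzero coordinate at $s$, while $\varphi(\sigma_s(c))$ has its nonzero coordinate at $e_G$; by uniform continuity of the coordinate projections, any compatible metric keeps these a definite distance apart unless $t_c$ and $t_{\sigma_s(c)}$ are both near $0$. Since this failure occurs for a positive density of $c\in\cC$ as soon as the parameters are not all small, $\rho_2(\alpha_s\circ\varphi,\varphi\circ\sigma_s)$ is bounded below and $\varphi\notin\Map(\rho,F,\delta,\sigma_i)$ for small $\delta$: approximate multiplicativity of $\sigma_i$ cannot repair the fact that the shift moves the support, and the analogy with the $\varphi_x$ of Lemma~\ref{L-top mean dim lower bound} does not apply (there one pushes forward genuine configurations along honest orbits). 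Consequently the image of your $[0,1]^{\cC}$ collapses to (essentially) small values and the count $\cD\ge|\cC|$ is lost. The construction that works, and is the paper's, assigns to $f\in([0,1]^n)^{[d]}$ the map $\varphi_f$ with $(\varphi_f(a))_t=f(\sigma_{t^{-1}}(a))$, so that each image point records $f$ along the whole approximate orbit; approximate equivariance then reduces to $\sigma_{t^{-1}}\sigma_s(a)=\sigma_{t^{-1}s}(a)$ for the finitely many $t$ the metric sees, and $\Phi^{-1}(\tilde{\cU}^{d}|_{\Map(\rho,F,\delta,\sigma)})=\cU^{d}$ together with Lemma~\ref{L-cover of cube} gives $\cD\ge dn$ directly for all $n$ at once (your reduction to $n=1$ followed by ``multiply by $n$'' is also unjustified, as you note Proposition~\ref{P-product} only gives the opposite inequality).
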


The amenable group case  of Theorem~\ref{T-cube} was proved by Lindenstrauss and Weiss \cite[Propositions 3.1 and 3.3]{LW}, and Coornaert and Krieger \cite[Corollaries 4.2 and 5.5]{CK}.

\begin{remark} \label{R-how to construct map}
In general it is not easy to construct elements of $\Map(\rho, F, \delta, \sigma)$ for sofic non-amenable groups. Theorem~\ref{T-cube} implies that there are plenty of such elements for Bernoulli shifts. When $G$ is residually finite (for example free groups) and $\Sigma$ comes from a sequence of finite quotient groups of $G$, each periodic point in $X$ with the period being the corresponding finite-index normal subgroup of $G$ gives rise to an element of $\Map(\rho, F, \delta, \sigma)$ \cite[Theorem 7.1]{KL11} \cite[Theorem 1.3]{BL}.
\end{remark}

Recall the lower box dimension recalled at the beginning of Section~\ref{S-sofic metric mean dim}. We need the following lemma.

\begin{lemma} \label{L-shift upper bound}
Let $Z$ be a compact metrizable space, and consider the left shift action $\alpha$ of $G$ on $X=Z^G$. Let $\rho$ be a compatible metric on $Z$. Define
$\rho'$ by $\rho'(x, y)=\rho(x_{e_G}, y_{e_G})$ for $x, y\in X$. Then $\rho'$ is a dynamically generating continuous pseudometric on $X$.
Furthermore,  for any $\varepsilon>0$ one has
$$ \log N_\varepsilon(Z, \rho)\le h_{\Sigma ,\infty}^\varepsilon (\rho' )\le \log N_{\varepsilon/2}(Z, \rho).$$
In particular,
$ \mdim_{\Sigma, \rM}(X, \rho')=\underline{\dim}_B(Z, \rho)$.
\end{lemma}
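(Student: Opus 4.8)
## Proof plan

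The plan is to establish the two inequalities $N_\varepsilon(Z,\rho)\le h^\varepsilon_{\Sigma,\infty}(\rho')\le N_{\varepsilon/2}(Z,\rho)$ by constructing, on one hand, enough approximately equivariant maps $[d]\to X$ that are $\varepsilon$-separated in $\rho'_\infty$, and on the other hand, an efficient spanning net for $\Map(\rho',F,\delta,\sigma)$ in $\rho'_\infty$. The point is that $\rho'$ only sees the coordinate at $e_G$, so a map $\varphi\colon [d]\to Z^G$ is determined for the purposes of $\rho'_\infty$ by the $Z$-valued function $a\mapsto \varphi(a)_{e_G}$, and the shift structure lets one turn a single separated set in $Z$ into an essentially independent choice at each of the $d$ points.

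For the lower bound, fix an $\varepsilon$-separated set $E\subseteq Z$ with $|E|=N_\varepsilon(Z,\rho)$. For each function $f\colon [d]\to E$ I would build $\varphi_f\colon [d]\to Z^G$ by prescribing $\varphi_f(a)_{e_G}$ in a way that records $f$ and is compatible with the shift on a large subset of $[d]$; concretely, using the sofic approximation one sets $\varphi_f(a)_s$ (for $s$ in a controlled finite window) to be $f(\sigma_{s^{-1}}(a))$, and arbitrary elsewhere, so that $\varphi_f\circ\sigma_t$ and $\alpha_t\circ\varphi_f$ agree in $\rho$ at all but a $\delta$-fraction of coordinates, hence $\varphi_f\in\Map(\rho',F,\delta,\sigma)$ once $\sigma$ is a good enough sofic approximation. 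Distinct $f,f'$ differ at some $a\in[d]$, and there $\rho'(\varphi_f(a),\varphi_{f'}(a))=\rho(f(a),f'(a))\ge\varepsilon$, so $\{\varphi_f\}$ is $\varepsilon$-separated in $\rho'_\infty$ with cardinality $N_\varepsilon(Z,\rho)^d$; dividing by $d$ and taking $\varlimsup$ gives $h^\varepsilon_{\Sigma,\infty}(\rho',F,\delta)\ge\log N_\varepsilon(Z,\rho)$, hence the claim. Here I expect the only subtlety is bookkeeping the finite window of group elements so that the $\rho'_2$-condition holds for all $s\in F$ — this is routine given condition (1) of the sofic approximation.

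For the upper bound, let $D\subseteq Z$ be an $(\varepsilon/2)$-spanning set with $|D|=N_{\varepsilon/2}(Z,\rho)$. Given any $\varphi\in\Map(\rho',F,\delta,\sigma)$, for each $a\in[d]$ choose $g(\varphi)(a)\in D$ with $\rho(\varphi(a)_{e_G},g(\varphi)(a))\le\varepsilon/2$. Two maps $\varphi,\psi$ with the same $g$ satisfy $\rho'(\varphi(a),\psi(a))=\rho(\varphi(a)_{e_G},\psi(a)_{e_G})\le\varepsilon$ for every $a$, so $\rho'_\infty(\varphi,\psi)\le\varepsilon$; hence the number of $\varepsilon$-separated points is at most the number of functions $[d]\to D$, which is $N_{\varepsilon/2}(Z,\rho)^d$. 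Thus $N_\varepsilon(\Map(\rho',F,\delta,\sigma_i),\rho'_\infty)\le N_{\varepsilon/2}(Z,\rho)^{d_i}$ for every $i$, giving $h^\varepsilon_{\Sigma,\infty}(\rho')\le h^\varepsilon_{\Sigma,\infty}(\rho',F,\delta)\le\log N_{\varepsilon/2}(Z,\rho)$. Note that both inequalities hold for \emph{every} $F$ and $\delta$, and for the upper bound $\Map$ is never empty (constant maps lie in it), so no degenerate case arises.

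Finally, that $\rho'$ is a continuous pseudometric is immediate, and it is dynamically generating because if $x\ne y$ in $Z^G$ then $x_s\ne y_s$ for some $s\in G$, whence $\rho'(s^{-1}x,s^{-1}y)=\rho(x_s,y_s)>0$. For the last sentence, divide the displayed chain by $|\log\varepsilon|$ and let $\varepsilon\to 0$: the left side gives $\varliminf_{\varepsilon\to 0}\frac{\log N_\varepsilon(Z,\rho)}{|\log\varepsilon|}=\underline{\dim}_B(Z,\rho)$, and the right side gives $\varliminf_{\varepsilon\to 0}\frac{\log N_{\varepsilon/2}(Z,\rho)}{|\log\varepsilon|}$, which equals the same limit since $|\log(\varepsilon/2)|/|\log\varepsilon|\to 1$; by the definition of $\mdim_{\Sigma,\rM}(X,\rho')$ as $\varliminf_{\varepsilon\to 0}\frac{1}{|\log\varepsilon|}h^\varepsilon_{\Sigma,\infty}(\rho')$ we conclude $\mdim_{\Sigma,\rM}(X,\rho')=\underline{\dim}_B(Z,\rho)$. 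The only genuine work is the lower-bound construction of the $\varphi_f$; everything else is essentially a translation of the definitions.
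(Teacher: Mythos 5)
Your proposal is correct and follows essentially the same route as the paper: the lower bound via the family $\varphi_f$, $f\colon[d]\to E$, with $(\varphi_f(a))_t=f(\sigma_{t^{-1}}(a))$ and the error controlled by the near-identity behaviour of $\sigma_{e_G}$, and the upper bound via an $(\varepsilon/2)$-spanning set of $Z$ applied coordinatewise (the paper simply fixes $F=\{e_G\}$, $\delta=1$ there, which suffices since one takes infima over $F$ and $\delta$). The only cosmetic point is that with your formula $\varphi_f(a)_{e_G}=f(\sigma_{e_G}(a))$, so the separation estimate for distinct $f,f'$ should be evaluated at $\sigma_{e_G}^{-1}(a)$ (as the paper does), or else one defines the $e_G$-coordinate to be $f(a)$ directly; since $\sigma_{e_G}$ is a bijection this changes nothing.
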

\begin{proof} Clearly $\rho'$ is a dynamically generating continuous pseudometric on $X$. Let $\varepsilon>0$.

We show first $h_{\Sigma ,\infty}^\varepsilon (\rho' )\ge \log N_\varepsilon(Z, \rho)$.
It suffices to show $h_{\Sigma ,\infty}^\varepsilon (\rho' ,F, \delta ) \ge \log N_\varepsilon(Z, \rho)$ for every
nonempty finite subset $F$ of $G$ and every $\delta>0$.
Take a $(\rho, \varepsilon)$-separated subset $Y$ of $Z$ with
$|Y|=N_\varepsilon(Z, \rho)$.

Let $\sigma$ be a map from $G$ to  $\Sym(d)$ from some $d\in \Nb$ which is a good enough sofic approximation for $G$ such
that $\sqrt{1-|\cQ|/d}\cdot \diam(Z, \rho)\le \delta $, where
$$\cQ=\{a\in [d]: \sigma_{e_G}\circ\sigma_s(a)=\sigma_s(a) \mbox{ for all } s\in F\}.$$
For each map $f: [d]\rightarrow Y$, we define a map $\varphi_f: [d]\rightarrow X$ by
$$ (\varphi_f(a))_t=f(\sigma_{t^{-1}}(a))$$
for all $a\in [d]$ and $t\in G$. Let $s\in F$. For any $a\in \cQ$,  one has
\begin{align*}
(\alpha_s\circ \varphi_f(a))_{e_G}=(\varphi_f(a))_{s^{-1}}=f(\sigma_{s}(a))=f(\sigma_{e_G}\circ \sigma_s(a))=(\varphi_f\circ \sigma_s(a))_{e_G},
\end{align*}
and hence $\rho'(\alpha_s\circ \varphi_f(a), \varphi_f\circ \sigma_s(a))=0$. Thus
\begin{align*}
\rho'_2(\alpha_s\circ \varphi_f, \varphi_f\circ \sigma_s)\le \sqrt{(1-|\cQ|/d)(\diam(Z, \rho))^2}\le \delta.
\end{align*}
Therefore $\varphi_f\in \Map (\rho' ,F,\delta ,\sigma )$. For any distinct maps $f, g: [d]\rightarrow Y$, say $f(a)\neq g(a)$ for some
$a\in [d]$, one has
\begin{align*}
\rho'_\infty(\varphi_f, \varphi_g)&\ge \rho'(\varphi_f(\sigma_{e_G}^{-1}(a)), \varphi_g(\sigma_{e_G}^{-1}(a)))\\
&=\rho((\varphi_f(\sigma_{e_G}^{-1}(a)))_{e_G}, (\varphi_g(\sigma_{e_G}^{-1}(a)))_{e_G})\\
&=\rho(f(a), g(a))\ge \varepsilon.
\end{align*}
Thus the set $\{\varphi_f: f\in Y^{[d]}\}$ is $(\rho'_\infty, \varepsilon)$-separated. Therefore
$$N_\varepsilon(\Map (\rho' ,F,\delta ,\sigma ), \rho'_\infty)\ge |Y|^d=(N_\varepsilon(Z, \rho))^d.$$
It follows that $h_{\Sigma ,\infty}^\varepsilon (\rho' ,F, \delta ) \ge \log N_\varepsilon(Z, \rho)$ as desired.

Next we show $h_{\Sigma ,\infty}^\varepsilon (\rho' )\le \log N_{\varepsilon/2}(Z, \rho)$. It suffices to show
$h_{\Sigma ,\infty}^\varepsilon (\rho' ,\{e_G\}, 1 )\le \log N_{\varepsilon/2}(Z, \rho)$. Take a maximal $(\rho, \varepsilon/2)$-separated subset $Y$
of $Z$. Then $|Y|\le N_{\varepsilon/2}(Z, \rho)$.

Let $\sigma$ be a map from $G$ to  $\Sym(d)$ from some $d\in \Nb$. Let $\sE$ be a $(\rho'_\infty, \varepsilon)$-separated subset of
$\Map (\rho' ,\{e_G\},1 ,\sigma )$ with $|\sE|=N_\varepsilon(\Map (\rho' ,\{e_G\},1 ,\sigma ), \rho'_\infty)$. For each $\varphi\in \sE$,
we find some map $f_\varphi: [d]\rightarrow Y$ such that
$$ \max_{a\in [d]}\rho((\varphi(a))_{e_G}, f_\varphi(a))<\varepsilon/2.$$
If $\varphi, \psi\in \sE$ and $f_\varphi=f_\psi$, then
$$ \rho'_\infty(\varphi, \psi)=\max_{a\in [d]}\rho((\varphi(a))_{e_G}, (\psi(a))_{e_G})<\varepsilon,$$
and hence $\varphi=\psi$, since $\sE$ is $(\rho'_\infty, \varepsilon)$-separated. Therefore
$$ N_\varepsilon(\Map (\rho' ,\{e_G\},1 ,\sigma ), \rho'_\infty)=|\sE|\le |Y|^d\le (N_{\varepsilon/2}(Z, \rho))^d.$$
It follows that $h_{\Sigma ,\infty}^\varepsilon (\rho' ,\{e_G\}, 1 )\le \log N_{\varepsilon/2}(Z, \rho)$ as desired.
\end{proof}

We also need the following version of Lebesgue's covering theorem:

\begin{lemma}\cite[Lemma 3.2]{LW} \cite[Theorem IV.2]{HW} \label{L-cover of cube}
Let $W$ be a finite set. Let $\cU$ be a finite open cover of $[0, 1]^W$ such that no element of $\cU$ intersects two opposing sides of $[0, 1]^W$.
Then
$$ \cD(\cU)\ge |W|.$$
\end{lemma}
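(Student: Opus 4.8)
This is a form of Lebesgue's covering theorem, and one may simply quote \cite[Theorem IV.2]{HW}; here is a plan for a self-contained proof. Write $n=|W|$ and, for $w\in W$, let $A_w=\{x\in[0,1]^W:x_w=0\}$ and $B_w=\{x\in[0,1]^W:x_w=1\}$ be the two opposing sides in the $w$-th direction. First I would fix a partition of unity $\{\varphi_U\}_{U\in\cU}$ subordinate to $\cU$ and, for each $w$, put
\[
\psi_w=\sum_{U\in\cU,\ U\cap B_w=\emptyset}\varphi_U\ -\ \sum_{U\in\cU,\ U\cap A_w=\emptyset}\varphi_U .
\]
The hypothesis that no $U$ meets both $A_w$ and $B_w$ gives $\psi_w\equiv 1$ on $A_w$ and $\psi_w\equiv -1$ on $B_w$: on $A_w$ every $U$ with $\varphi_U\neq 0$ meets $A_w$, hence misses $B_w$, so it contributes $+\varphi_U$ to the first sum and nothing to the second, and the expression collapses to $\sum_U\varphi_U=1$; symmetrically on $B_w$. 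Assemble $\Psi=(\psi_w)_{w\in W}\colon[0,1]^W\to[-1,1]^W$, and identify $[0,1]\cong[-1,1]$ by $t\mapsto 1-2t$, under which $A_w\leftrightarrow\{y_w=1\}$ and $B_w\leftrightarrow\{y_w=-1\}$.

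Next I would observe that $\Psi$ is \emph{essential}: along the straight-line homotopy $H_s=(1-s)\,\mathrm{id}+s\Psi$ one has, on the side $\{y_w=1\}$, that both $\mathrm{id}$ and $\Psi$ have $w$-th coordinate $1$, so $H_s$ has $w$-th coordinate $1\neq 0$ there (and $-1$ on $\{y_w=-1\}$); since every boundary point lies on some such side, $H_s$ never vanishes on $\partial[0,1]^W$. A self-map of the cube homotopic to the identity through maps nonvanishing on the boundary cannot avoid $0$ in its image — otherwise $\Psi/\|\Psi\|$ would retract the cube onto its boundary — so $0\in\Psi([0,1]^W)$. Now suppose, for contradiction, that $\ord(\cU)\le n-1$. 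Then at each point at most $n$ of the $\varphi_U$ are positive, so $\Phi\colon x\mapsto(\varphi_U(x))_{U}$ maps $[0,1]^W$ into the $(n-1)$-skeleton $K$ of the simplex on the vertex set $\cU$, and $\Psi=L\circ\Phi$ for the affine map $L\colon \mathbb{R}^{\cU}\to\mathbb{R}^{W}$ with components $\psi_w$. A generic small perturbation $L'$ of $L$ (nudge the vertex images) makes the affine hull of each of the finitely many $(n-1)$-simplices of $K$ miss the point $0\in\mathbb{R}^{n}$, so $0\notin L'(K)$; taking the perturbation small enough, $L'\circ\Phi$ still avoids $0$ on the compact boundary (where $\|\Psi\|_\infty\ge 1$), hence is homotopic to $\Psi$, hence to $\mathrm{id}$, through boundary-nonvanishing maps — contradicting essentiality since $(L'\circ\Phi)([0,1]^W)\subseteq L'(K)$. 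Therefore $\ord(\cU)\ge n$. Since every finite open cover refining $\cU$ again has no member meeting two opposing sides (its members lie inside members of $\cU$), the same bound applies to every refinement, giving $\cD(\cU)=\inf_{\cV\succ\cU}\ord(\cV)\ge|W|$.

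The one genuinely non-formal ingredient, and thus the main obstacle, is this essentiality statement — equivalently, the non-existence of a retraction $[0,1]^W\to\partial[0,1]^W$ — which is just the Brouwer fixed-point theorem; everything else is bookkeeping with partitions of unity and a general-position argument. Alternatively one could route the whole thing through the paper's machinery: by Lemma~\ref{L-compatible map} it suffices to rule out a $\cU$-compatible continuous map $f\colon[0,1]^W\to Y$ with $Y$ compact metrizable and $\dim Y\le|W|-1$; for such an $f$ the images $f(A_w)$ and $f(B_w)$ would be disjoint closed subsets of $Y$ (a common value pulls back to a fibre meeting two opposing sides, hence lying in no member of $\cU$), so the separator characterization of covering dimension would furnish separators $C_w$ between $f(A_w)$ and $f(B_w)$ with $\bigcap_{w\in W}C_w=\emptyset$, whence $f^{-1}(C_w)$ would be separators between $A_w$ and $B_w$ in the cube with empty common intersection — again contradicting the Brouwer-type rigidity of the cube.
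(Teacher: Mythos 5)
The paper does not prove this lemma at all: it is quoted as known, with references to \cite[Lemma 3.2]{LW} and \cite[Theorem IV.2]{HW} (Lebesgue's covering theorem), so there is no internal proof to match. Your self-contained argument is correct in outline and is essentially the classical one. The computation that $\psi_w\equiv 1$ on $A_w$ and $\psi_w\equiv -1$ on $B_w$ uses the hypothesis exactly as it should; the straight-line homotopy $H_s$ is indeed nonvanishing on the boundary because every boundary point lies on some face where the relevant coordinate of both $\mathrm{id}$ and $\Psi$ equals $\pm 1$; and the reduction of $\ord(\cU)\le |W|-1$ to a factorization $\Psi=L\circ\Phi$ through the $(|W|-1)$-skeleton of the nerve, followed by a general-position perturbation of $L$ (finitely many simplices with $|W|$ vertices each, so generically their affine hulls miss the origin) and a small-perturbation homotopy (legitimate since $\|\Psi\|_\infty\ge 1$ on the boundary), yields the desired contradiction with the essentiality statement. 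The final observation that every refinement $\cV\succ\cU$ inherits the hypothesis, so the bound on $\ord$ passes to $\cD(\cU)=\inf_{\cV\succ\cU}\ord(\cV)$, is exactly the step needed to get $\cD(\cU)\ge |W|$ rather than only $\ord(\cU)\ge|W|$. One phrase is imprecise: ``otherwise $\Psi/\|\Psi\|$ would retract the cube onto its boundary'' is not literally a retraction (it need not fix the boundary); the correct completion is the standard degree or no-retraction argument: if a map of the cube into $\Rb^W$ is homotopic to the identity through maps nonvanishing on the boundary and itself avoids $0$, then the identity of the boundary would be null-homotopic in $\Rb^W\setminus\{0\}$, contradicting Brouwer. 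Since you state and use exactly this essentiality principle, the gap is only expository. Your alternative route via Lemma~\ref{L-compatible map} and the separator characterization of covering dimension is also valid and is in fact closer to how \cite{HW} proves Theorem IV.2, whereas your main argument is closer in spirit to the partition-of-unity/nerve proof; either one correctly substitutes for the citation.
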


We are ready to prove Theorem~\ref{T-cube}.

\begin{proof}[Proof of Theorem~\ref{T-cube}] By Theorem~\ref{T-top vs metric}, Lemma~\ref{L-shift upper bound}, Proposition~\ref{P-allow pseudometric} and the Pontrjagin-Shnirelmann theorem as recalled at the end of Section~\ref{S-comparison},  we have $\mdim_\Sigma(X)\le \mdim_{\Sigma, \rM}(X)\le \dim(Z)$.

Now assume that $Z$ contains a copy of $[0, 1]^n$ for every natural number $n\le \dim(Z)$.
It suffices to show $\mdim_\Sigma(X)\ge \dim(Z)$. In turn it suffices to show $\mdim_\Sigma(X)\ge n$ for every natural number $n\le \dim(Z)$.
Since $([0, 1]^n)^G$ is a closed $G$-invariant subset of $Z^G$, by Proposition~\ref{P-subspace metric} one has
$\mdim_\Sigma(X)\ge \mdim_\Sigma(([0, 1]^n)^G)$. Therefore it suffices to show
$\mdim_\Sigma(([0, 1]^n)^G)\ge n$.

Take a finite open cover $\cU$ of $[0, 1]^n$ such that no element of $\cU$ intersects two opposing sides of $[0, 1]^n$.
For each $d\in \Nb$, note that  no element of $\cU^d$ intersects two opposing sides of $([0, 1]^n)^{[d]}=[0, 1]^{dn}$, and hence by Lemma~\ref{L-cover of cube}
one has $\cD(\cU^d)\ge dn$.

Denote by
$\pi$ the map $([0,1]^n)^G\rightarrow [0, 1]^n$ sending $x$ to $x_{e_G}$. Then $\tilde{\cU}:=\pi^{-1}(\cU)$ is a finite open cover of $([0,1]^n)^G$. Let $\rho$ be a compatible metric on $([0,1]^n)^G$. It suffices to show $\cD_\Sigma(\tilde{\cU}, \rho, F, \delta)\ge n$ for every nonempty finite subset $F$ of $G$ and every $\delta>0$.

Take a nonempty finite subset $K$ of $G$ such that if $x, y\in ([0,1]^n)^G$ are equal on $K$, then $\rho(x, y)<\delta/2$.

Let $\sigma$ be a map from $G$ to $\Sym(d)$ for some $d\in \Nb$ which is a good enough sofic approximation for $G$ such that
$\delta^2/4+(1-|\cQ|/d)(|F|+1)(\diam(([0,1]^n)^G, \rho))^2\le \delta^2$, where
$$\cQ=\{a\in [d]: \sigma_{t^{-1}}\circ\sigma_s(a)=\sigma_{t^{-1}s}(a) \mbox{ for all } s\in F, t\in K \mbox{ and } \sigma_{e_G}(a)=a\}.$$
For each map $f: [d]\rightarrow [0, 1]^n$, we define a map $\varphi_f: [d]\rightarrow ([0, 1]^n)^G$ by
\begin{align} \label{E-cube}
 (\varphi_f(a))_t=f(\sigma_{t^{-1}}(a))
\end{align}
for all $a\in [d]$ and $t\in G\setminus \{e_G\}$, and
\begin{align} \label{E-cube2}
 (\varphi_f(a))_{e_G}=f(a)
\end{align}
for all $a\in [d]$.  Note that (\ref{E-cube}) holds  for all $a\in \cQ$ and $t\in G$.
Set $\cR=\cQ\cap \bigcap_{s\in F}\sigma_s^{-1}(\cQ)$.
For any $a\in \cR$, $s\in F$, and $t\in K$, since $a, \sigma_s(a)\in \cQ$, one has
\begin{align*}
(\alpha_s\circ \varphi_f(a))_t=(\varphi_f(a))_{s^{-1}t}=f(\sigma_{t^{-1}s}(a))=f(\sigma_{t^{-1}}\circ \sigma_s(a))=(\varphi_f\circ \sigma_s(a))_t,
\end{align*}
and hence $\rho(\alpha_s\circ \varphi_f(a), \varphi_f\circ \sigma_s(a))<\delta/2$ by the choice of $K$. Thus
\begin{align*}
\rho_2(\alpha_s\circ \varphi_f, \varphi_f\circ \sigma_s)&\le \sqrt{\delta^2/4+(1-|\cR|/d)(\diam(([0, 1]^n)^G, \rho))^2}\\
&\le \sqrt{\delta^2/4+(1-|\cQ|/d)(|F|+1)(\diam(([0, 1]^n)^G, \rho))^2}  \le \delta.
\end{align*}
Therefore $\varphi_f\in \Map (\rho ,F,\delta ,\sigma )$. The map $\Phi: ([0, 1]^n)^{[d]}\rightarrow \Map (\rho ,F,\delta ,\sigma )$
sending $f$ to $\varphi_f$ is clearly continuous. Note that $\Phi^{-1}(\tilde{\cU}^d|_{\Map(\rho, F, \delta, \sigma)})=\cU^d$ because of the equation \eqref{E-cube2}. Therefore $\cD_\Sigma(\tilde{\cU}, \rho, F, \delta,\sigma)\ge \cD(\cU^d)\ge dn$.
It follows
that $\cD_\Sigma(\tilde{\cU}, \rho, F, \delta)\ge n$ as desired.
\end{proof}

\begin{theorem} \label{T-factor has positive dim}
Let $Z$ be a path-connected compact metrizable space, and  consider the left shift action $\alpha$ of a countable sofic group $G$ on $X=Z^G$. For
any nontrivial factor $Y$ of $X$, one has
$$ \mdim_\Sigma(Y)>0.$$
\end{theorem}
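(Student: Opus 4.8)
The plan is to exhibit a single finite open cover $\cU$ of $Y$ for which $\cD_\Sigma(\cU)$ is bounded below by a positive constant; since $\mdim_\Sigma(Y)=\sup_\cU\cD_\Sigma(\cU)$, this suffices. Let $\pi\colon X\to Y$ be a factor map. As $Y$ is nontrivial and $\pi$ is surjective, there are $p^{(0)},p^{(1)}\in Z^G$ with $\pi(p^{(0)})\neq\pi(p^{(1)})$. Fix a compatible metric $\rho_Y$ on $Y$. Using uniform continuity of $\pi$, first choose a finite symmetric set $E\subseteq G$ with $e_G\in E$ such that any two points of $X$ agreeing on $E$ have $\pi$-images at $\rho_Y$-distance less than a prescribed small quantity; then replace $p^{(0)}$ by the point agreeing with $p^{(0)}$ on $E$ and with $p^{(1)}$ off $E$. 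After this we may assume $p^{(0)}$ and $p^{(1)}$ agree outside $E$, $\kappa:=\rho_Y(\pi p^{(0)},\pi p^{(1)})>0$, and any two points of $X$ agreeing on $E$ have $\pi$-images within $\kappa/10$. Let $\cU$ be any finite open cover of $Y$ with $\mesh(\cU,\rho_Y)<\kappa/2$; then no member of $\cU$ meets both the $(\kappa/10)$-ball around $\pi p^{(0)}$ and the $(\kappa/10)$-ball around $\pi p^{(1)}$. Finally, since $Z$ is path-connected, so is the finite product $\prod_{t\in E}Z$, hence there is a path $\gamma\colon[0,1]\to Z^G$ with $\gamma(0)=p^{(0)}$, $\gamma(1)=p^{(1)}$ and $\gamma(r)_t=p^{(0)}_t$ for all $t\notin E$ and all $r$.

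The core is to embed a high-dimensional cube into the map spaces over $Y$, imitating the construction in the proof of Theorem~\ref{T-cube}. Fix a nonempty finite $F\subseteq G$ and $\delta>0$; I must show $\varlimsup_i \cD(\cU,\rho_Y,F,\delta,\sigma_i)/d_i\ge c$ for some $c=c(E)>0$. Let $\sigma\colon G\to\Sym(d)$ be a good enough sofic approximation (for a large finite subset of $G$ containing $E\cup F$ and the relevant products). Choose greedily, from the set of $a\in[d]$ at which $\sigma$ satisfies the pertinent approximate-homomorphism and injectivity relations, a subset $T\subseteq[d]$ with the sets $\sigma(E)a$ ($a\in T$) pairwise disjoint; since each choice blocks at most $|E|^2$ elements, one gets $|T|\ge cd$ with $c=c(E)>0$. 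For $r=(r_a)_{a\in T}\in[0,1]^T$ define $f_r\colon[d]\to Z$ by $f_r(\sigma_{t^{-1}}a)=\gamma(r_a)_t$ for $a\in T$ and $t\in E$ (well defined by the choice of $T$) and $f_r$ constant elsewhere, and then set $\varphi_{f_r}\colon[d]\to X$ by $(\varphi_{f_r}(a))_t=f_r(\sigma_{t^{-1}}a)$ exactly as in Theorem~\ref{T-cube}. The same computation as there gives $\varphi_{f_r}\in\Map(\rho_X,F',\delta',\sigma)$ for a suitable finite $F'$ and small $\delta'$, where $\rho_X$ is a compatible metric on $X$; since $\pi$ is continuous and $G$-equivariant, a routine estimate then shows $\pi\circ\varphi_{f_r}\in\Map(\rho_Y,F,\delta,\sigma)$ once $\delta'$ is small enough. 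Thus $\Theta(r):=\pi\circ\varphi_{f_r}$ defines a continuous map $\Theta\colon[0,1]^T\to\Map(\rho_Y,F,\delta,\sigma)$. By construction $\varphi_{f_r}(a)$ agrees with $\gamma(r_a)$ on $E$ for every $a\in T$, so $\Theta(r)(a)$ is within $\kappa/10$ of $\pi p^{(0)}$ when $r_a=0$ and within $\kappa/10$ of $\pi p^{(1)}$ when $r_a=1$.

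To turn this into the lower bound, consider the finite open cover $\cV=\{\Theta^{-1}(W):W\in\cU^d\}$ of $[0,1]^T$. For $W=\prod_{a}U_a\in\cU^d$ and any $a_0\in T$, if $\Theta^{-1}(W)$ met both faces $\{r_{a_0}=0\}$ and $\{r_{a_0}=1\}$ then $U_{a_0}$ would meet both $(\kappa/10)$-balls, contradicting $\mesh(\cU,\rho_Y)<\kappa/2$; hence no member of $\cV$ meets two opposing sides of $[0,1]^T$, so $\cD(\cV)\ge|T|$ by Lemma~\ref{L-cover of cube}. On the other hand, by Lemma~\ref{L-compatible map} there is a $\cU^d|_{\Map(\rho_Y,F,\delta,\sigma)}$-compatible continuous map $g$ from $\Map(\rho_Y,F,\delta,\sigma)$ into a compact metrizable space $K$ with $\dim K=\cD(\cU,\rho_Y,F,\delta,\sigma)$; then $g\circ\Theta$ is $\cV$-compatible, so Lemma~\ref{L-compatible map} gives $\cD(\cV)\le\dim K$. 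Combining, $\cD(\cU,\rho_Y,F,\delta,\sigma)\ge|T|\ge cd$ for every good enough $\sigma$, hence $\cD_\Sigma(\cU,\rho_Y,F,\delta)\ge c$. Since $F$ and $\delta$ were arbitrary, $\cD_\Sigma(\cU)\ge c$, and therefore $\mdim_\Sigma(Y)\ge\cD_\Sigma(\cU)\ge c>0$.

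I expect the main difficulty to be the sofic-approximation bookkeeping in the middle step: arranging one large finite subset of $G$ and one tolerance so that, simultaneously, the greedy set $T$ of linear size exists with $\sigma(E)a$ pairwise disjoint, the maps $\varphi_{f_r}$ genuinely lie in an auxiliary map space over $X$, and composing with $\pi$ keeps them inside $\Map(\rho_Y,F,\delta,\sigma_i)$ for all large $i$. The conceptual point that makes everything work is that $\Theta$ need not be injective: only the property that opposite faces of the cube are sent to points far apart in $Y$ is used, and that is exactly what is preserved after applying $\pi$.
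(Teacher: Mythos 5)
Your proposal is correct and is essentially the paper's argument: embed a cube $[0,1]^T$ with $|T|$ of order $d$ into the model space by moving, along a path in $Z^E$, the coordinates in a disjointly translated family $\sigma(E)a$, and then apply Lemma~\ref{L-cover of cube} to the pulled-back cover, exactly as the paper does with its set $\cW$ and the paths $\gamma_s$. The only packaging difference is that the paper fixes a two-element cover $\{U,V\}$ of $Y$ with neither member dense and transfers the estimate through Lemma~\ref{L-factor}, whereas you push the cube forward through $\pi$ into $\Map(\rho_Y,F,\delta,\sigma)$ and separate opposite faces via uniform continuity of $\pi$ and a small-mesh cover (the slight circularity in choosing $E$ before $\kappa$ is harmless: choose the tolerance relative to the original distance $\rho_Y(\pi p^{(0)},\pi p^{(1)})$).
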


The amenable group case of Theorem~\ref{T-factor has positive dim} was proved by Lindenstrauss and Weiss \cite[Theorem 3.6]{LW}. We adapt their argument to our situation. First we prove a lemma:

\begin{lemma} \label{L-factor}
Let $G$ act continuously on a compact metrizable space $X$. Let $Y$ be a factor of $X$ with factor map $\pi: X\rightarrow Y$. Let $\cU$ be a finite open cover of $Y$. Then
$$\cD_\Sigma(\pi^{-1}(\cU))\le \cD_\Sigma(\cU).$$
\end{lemma}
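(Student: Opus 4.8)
The plan is to transport approximately equivariant maps into $X$ to approximately equivariant maps into $Y$ via the factor map $\pi$, and then invoke the elementary fact that pulling a finite open cover back along a continuous map cannot increase its $\cD$-invariant. First I would fix a compatible metric $\rho_Y$ on $Y$ and a compatible metric $\rho_X$ on $X$; by Lemma~\ref{L-sofic top mean dim independent of metric} it suffices to show $\cD_\Sigma(\pi^{-1}(\cU),\rho_X)\le\cD_\Sigma(\cU,\rho_Y)$. Since $X$ is compact, $\pi$ is uniformly continuous, so for any $\varepsilon>0$ there is $\gamma>0$ with $\rho_Y(\pi(x),\pi(x'))\le\varepsilon$ whenever $\rho_X(x,x')\le\gamma$. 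Using $\alpha_s\circ\pi=\pi\circ\alpha_s$ together with the standard "good set" estimate from the proof of Lemma~\ref{L-sofic top mean dim independent of metric}, I would prove: for every nonempty finite $F\subseteq G$ and every $\delta'>0$ there is $\delta>0$ such that, for every $\sigma\colon G\to\Sym(d)$, composition with $\pi$ carries $\Map(\rho_X,F,\delta,\sigma)$ into $\Map(\rho_Y,F,\delta',\sigma)$. Concretely one takes $\varepsilon=\delta'/\sqrt 2$, the corresponding $\gamma$, and $\delta>0$ small enough that $\delta^2\gamma^{-2}(\diam(Y,\rho_Y))^2\le\delta'^2/2$; then for $\varphi\in\Map(\rho_X,F,\delta,\sigma)$ and $s\in F$, the set of $a\in[d]$ with $\rho_X(\alpha_s\varphi(a),\varphi(sa))\le\gamma$ has cardinality at least $(1-\delta^2\gamma^{-2})d$, on that set the matching $\rho_Y$-distances for $\pi\circ\varphi$ are at most $\varepsilon$, and the usual two-term bound gives $((\rho_Y)_2(\alpha_s\circ(\pi\circ\varphi),(\pi\circ\varphi)\circ\sigma_s))^2\le\varepsilon^2+\delta^2\gamma^{-2}(\diam(Y,\rho_Y))^2\le\delta'^2$. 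Thus one obtains a continuous map $\Pi\colon\Map(\rho_X,F,\delta,\sigma)\to\Map(\rho_Y,F,\delta',\sigma)$, namely the restriction of the $d$-fold product $\pi^{[d]}\colon X^{[d]}\to Y^{[d]}$, $\pi^{[d]}(\varphi)=\pi\circ\varphi$.

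Next I would observe that $(\pi^{-1}(\cU))^d=(\pi^{[d]})^{-1}(\cU^d)$, so restricting to the map spaces gives $(\pi^{-1}(\cU))^d|_{\Map(\rho_X,F,\delta,\sigma)}=\Pi^{-1}\bigl(\cU^d|_{\Map(\rho_Y,F,\delta',\sigma)}\bigr)$. It then remains to note that for any continuous map $f\colon A\to B$ of compact metrizable spaces and any finite open cover $\cV$ of $B$ one has $\cD(f^{-1}(\cV))\le\cD(\cV)$: given a finite open cover $\cV'$ of $B$ refining $\cV$, the cover $f^{-1}(\cV')$ of $A$ refines $f^{-1}(\cV)$ and satisfies $\ord(f^{-1}(\cV'))\le\ord(\cV')$ because $\sum_{V'\in\cV'}1_{f^{-1}(V')}(a)=\sum_{V'\in\cV'}1_{V'}(f(a))$ for every $a\in A$; taking the infimum over such $\cV'$ yields the inequality. (Alternatively, compose $f$ with a $\cV$-compatible map and quote Lemma~\ref{L-compatible map}.) Applying this to $\Pi$ gives $\cD(\pi^{-1}(\cU),\rho_X,F,\delta,\sigma)\le\cD(\cU,\rho_Y,F,\delta',\sigma)$.

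Finally I would take $\sigma=\sigma_i$ and pass to the limit. Since $\Map(\rho_X,F,\delta,\sigma_i)$ nonempty forces $\Map(\rho_Y,F,\delta',\sigma_i)$ nonempty (the latter contains $\Pi$ of the former), the degenerate case where the $Y$-side map spaces are eventually empty makes the $X$-side ones eventually empty and both quantities equal to $-\infty$. In every case, dividing the last inequality by $d_i$ and taking $\varlimsup_{i\to\infty}$ gives $\cD_\Sigma(\pi^{-1}(\cU),\rho_X,F,\delta)\le\cD_\Sigma(\cU,\rho_Y,F,\delta')$, hence $\cD_\Sigma(\pi^{-1}(\cU),\rho_X,F)\le\cD_\Sigma(\cU,\rho_Y,F,\delta')$. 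As $\delta'>0$ is arbitrary this yields $\cD_\Sigma(\pi^{-1}(\cU),\rho_X,F)\le\cD_\Sigma(\cU,\rho_Y,F)$; taking the infimum over nonempty finite $F\subseteq G$ and applying Lemma~\ref{L-sofic top mean dim independent of metric} gives $\cD_\Sigma(\pi^{-1}(\cU))\le\cD_\Sigma(\cU)$.

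The only delicate point is bookkeeping: one must keep the dependence $\delta=\delta(F,\delta')$ running in the right direction so that the infimum over $\delta$ on the $X$-side composes correctly with the infimum over $\delta'$ on the $Y$-side, and one must handle the empty-map-space degeneracy. There is no genuine mathematical obstacle — the uniform continuity of $\pi$ and the monotonicity of $\cD$ under pullback do all the work.
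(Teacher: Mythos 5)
Your proposal is correct and follows essentially the same route as the paper: composition with $\pi$ gives a continuous map $\Map(\rho_X,F,\delta,\sigma)\to\Map(\rho_Y,F,\delta',\sigma)$ under which $\cU^d$ restricted pulls back to $(\pi^{-1}(\cU))^d$ restricted, and one concludes by monotonicity of $\cD$ under pullback together with the metric-independence of the $\cD_\Sigma$ quantities. The only difference is that the paper avoids your $\varepsilon$--$\delta$ bookkeeping by replacing the metric on $X$ with $\rho+\pi^{-1}(\rho')$ (permissible by Lemma~\ref{L-sofic top mean dim independent of metric}), which makes $\pi$ $1$-Lipschitz so that the same $\delta$ works on both sides.
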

\begin{proof} Let $\rho$ and $\rho'$ be compatible metrics on $X$ and $Y$ respectively. Replacing $\rho$ by
$\rho+\pi^{-1}(\rho')$ if necessary, we may assume that $\rho(x, y)\ge \rho'(\pi(x), \pi(y))$ for all $x, y\in X$.

Let $F$ be  a nonempty finite subset of $X$ and $\delta>0$. Let $\sigma$ be a map from $G$ to $\Sym(d)$ for some $d\in \Nb$.
For any $\varphi \in \Map(\rho, F, \delta, \sigma)$, one has $\pi\circ \varphi \in \Map(\rho', F, \delta, \sigma)$. Thus we have
a continuous map $\Phi:  \Map(\rho, F, \delta, \sigma)\rightarrow \Map(\rho', F, \delta, \sigma)$ sending $\varphi$ to $\pi\circ \varphi$.
Furthermore, $\Phi^{-1}(\cU^d|_{\Map(\rho', F, \delta, \sigma)})=(\pi^{-1}(\cU))^d|_{\Map(\rho, F, \delta, \sigma)}$. Thus
$\cD_\Sigma(\pi^{-1}(\cU), \rho, F, \delta, \sigma)\le \cD_\Sigma(\cU, \rho', F, \delta, \sigma)$. It follows that
$\cD_\Sigma(\pi^{-1}(\cU), \rho, F, \delta)\le \cD_\Sigma(\cU, \rho', F, \delta)$. Since $F$ is an arbitrary nonempty finite subset of $G$ and $\delta$ is an arbitrary positive number, we get
$$\cD_\Sigma(\pi^{-1}(\cU))=\cD_\Sigma(\pi^{-1}(\cU), \rho)\le \cD_\Sigma(\cU, \rho')=\cD_\Sigma(\cU).$$
\end{proof}

We are ready to prove Theorem~\ref{T-factor has positive dim}.

\begin{proof}[Proof of Theorem~\ref{T-factor has positive dim}] Denote by $\pi$ the factor map $X\rightarrow Y$. Let $\cU=\{U, V\}$ be an open cover of $Y$ such that none of $U$ and $V$ is dense in $Y$. By Lemma~\ref{L-factor} it suffices to show that $\cD_\Sigma(\pi^{-1}(\cU))>0$. Take compatible metrics $\rho$
and $\rho'$ on $Z$ and $X$ respectively.

Note that none of $\pi^{-1}(U)$ and $\pi^{-1}(V)$ is dense in $X$. Take $x_U\in X\setminus \overline{\pi^{-1}(U)}$ and $x_V\in X\setminus \overline{\pi^{-1}(V)}$.  Then there exist a finite symmetric subset $K$ of $G$ containing $e_G$
such that if $x\in X$
coincides with $x_U$ (resp. $x_V$) on $K$,
then $x\not \in \pi^{-1}(U)$ (resp. $x\not \in \pi^{-1}(V)$).
Since $Z$ is path connected, for each $s\in K$ we can take a continuous map $\gamma_s: [0, 1]\rightarrow Z$ such that $\gamma_s(0)=(x_U)_s$ and
$\gamma_s(1)=(x_V)_s$.

Now it suffices to show $\cD_\Sigma(\pi^{-1}(\cU), \rho', F, \delta)\ge 1/(2|K^2|)$ for every finite subset $F$ of $G$ containing $K$ and every $\delta>0$.
Take a finite symmetric subset $K_1$ of $G$ such that if two points $x$ and $y$ of $X$ coincide on $K_1$, then $\rho'(x, y)<\delta/2$.

Let $\sigma$ be a map from $G$ to $\Sym(d)$ for some $d\in \Nb$ which is a good enough sofic approximation for $G$ such that
$\sqrt{\delta^2/4+(1-|\cQ|/d)(\diam(X, \rho'))^2}\le \delta$ and
$|\cQ|/d\ge 1/2$, where
\begin{align*}
\cQ=\{a\in [d]: \sigma_s\sigma_t(a)&=\sigma_{st}(a) \mbox{ for all } s, t\in F\cup K_1, \mbox{ and } \sigma_{e_G}(a)=a, \\
\mbox{ and } \sigma_s(a)&\neq \sigma_t(a) \mbox{ for all distinct } s, t\in K\}.
\end{align*}
Take a maximal subset $\cW$ of $\cQ$ subject to the condition that the sets $\sigma(K)a$ for $a\in \cW$ are pairwise disjoint.
Then $\cQ\subseteq \sigma(K^2)\cW$. Thus $|\cW|/d\ge |\cQ|/(|K^2|d)\ge 1/(2|K^2|)$.

Now we define a map $\Phi: [0, 1]^{\cW}\rightarrow \Map (\rho' ,F,\delta ,\sigma )$.
Fix $z_0\in Z$. Let $f\in [0, 1]^{\cW}$. We define $\tilde{f}\in Z^{[d]}$ by
$$ \tilde{f}_{\sigma(s)a}=\gamma_{s^{-1}}(f(a))$$
for $a\in \cW$, $s\in K$, and
$$ \tilde{f}_b=z_0$$
for $b\in [d]\setminus \sigma(K)\cW$.  Then we define $\varphi_{\tilde{f}}\in X^{[d]}$ by
$$ (\varphi_{\tilde{f}}(a))_s=\tilde{f}(\sigma_{s^{-1}}(a))$$
for $a\in [d]$ and $s\in G$.
Let $s\in F$. For any $a\in \cQ$ and $t\in K_1$,  one has
\begin{align*}
(\alpha_s\circ \varphi_{\tilde{f}}(a))_t=(\varphi_{\tilde{f}}(a))_{s^{-1}t}=\tilde{f}(\sigma_{t^{-1}s}(a))=\tilde{f}(\sigma_{t^{-1}}\circ \sigma_s(a))=(\varphi_{\tilde{f}}\circ \sigma_s(a))_t,
\end{align*}
and hence $\rho'(\alpha_s\circ \varphi_{\tilde{f}}(a), \varphi_{\tilde{f}}\circ \sigma_s(a))<\delta/2$ by the choice of $K_1$. Thus
\begin{align*}
\rho'_2(\alpha_s\circ \varphi_{\tilde{f}}, \varphi_{\tilde{f}}\circ \sigma_s)\le \sqrt{\delta^2/4+(1-|\cQ|/d)(\diam(X, \rho'))^2}\le \delta.
\end{align*}
Therefore $\varphi_{\tilde{f}}\in \Map (\rho' ,F,\delta ,\sigma )$. Set $\Phi(f)=\varphi_{\tilde{f}}$.  Clearly $\Phi$ is continuous.

Set $\cV=\Phi^{-1}((\pi^{-1}(\cU))^d|_{\Map (\rho' ,F,\delta ,\sigma )})$. Let $f\in [0, 1]^{\cW}$ and $a\in \cW$. If $f(a)=0$, then
$(\varphi_{\tilde{f}}(a))_s=\tilde{f}_{\sigma(s^{-1})a}=\gamma_s(0)=(x_U)_s$ for all $s\in K$, and hence $\varphi_{\tilde{f}}(a)\not \in \pi^{-1}(U)$ by the choice
of $K$. Similarly, if $f(a)=1$, then $\varphi_{\tilde{f}}(a)\not \in \pi^{-1}(V)$. Thus no element of $\cV$ intersects two opposing sides of $[0, 1]^{\cW}$. By Lemma~\ref{L-cover of cube} we conclude that $\cD(\pi^{-1}(\cU), \rho', F, \delta, \sigma)\ge \cD(\cV)\ge |\cW|\ge d/(2|K^2|)$.
It follows that $\cD_\Sigma(\pi^{-1}(\cU), \rho', F, \delta)\ge 1/(2|K^2|)$ as desired.
\end{proof}

\section{Small-boundary Property}

In this section we discuss the relation between the small-boundary property and non-positive sofic mean topological dimension.

We start with recalling the definitions of zero inductive dimensional compact metrizable spaces and actions with small-boundary property.

A compact metrizable space $Y$ is said to have {\it inductive dimension $0$} if for every $y\in Y$ and every  neighborhood $U$ of $Y$
there exists a neighborhood $V$ of $y$ contained in $U$ such that the boundary $\partial V$ of $V$ is empty \cite[Definition II.1]{HW}.
A compact metrizable space has inductive dimension $0$ if and only if it has covering dimension $0$ \cite[Theorem V.8]{HW}.

\begin{definition} \label{D-small}
Let a countable group $\Gamma$ act continuously on a compact metrizable space $X$. We denote by $M(X, \Gamma)$ the set of $\Gamma$-invariant Borel probability measures on $X$. We say that a closed subset $Z$ of $X$ is {\it small} if $\mu(Z)=0$ for all $\mu \in M(X, \Gamma)$. In particular, when $M(X, \Gamma)$ is empty, every closed subset of $X$ is small. We say that the action has the {\it small-boundary property} (SBP) if for every point $x\in X$ and every neighborhood $U$ of $x$, there is an open neighborhood $V\subseteq U$ of $x$ with small boundary.
\end{definition}

When $\Gamma$ is amenable, for any subset $Z$ of $X$ it is easy to check that the function $F\mapsto \max_{x\in X}\sum_{s\in F}1_Z(sx)$ defined on the set of nonempty finite subsets of $\Gamma$ satisfies the conditions of the Ornstein-Weiss lemma \cite[Theorem 6.1]{LW}. Thus $\frac{1}{|F|}\max_{x\in X}\sum_{s\in F}1_Z(sx)$ converges to some limit as $F$ becomes more and more left invariant. Shub and Weiss defined $Z$ to be {\it small} if this limit is $0$ \cite{SW}. It is proved in page 538 of \cite{SW} that when $\Gamma$ is amenable and $Z$ is closed, the definition of Shub and Weiss coincides with Definition~\ref{D-small}. The notion of the SBP
was introduced in \cite{Lindenstrauss} and \cite{LW}.

If $X$ has less than $2^{\aleph_0}$ ergodic $G$-invariant Borel probability measures, then the action has the SBP \cite{SW} \cite[page 18]{LW}.

When $\Gamma$ is amenable, Lindenstrauss and Weiss showed that actions with the SBP has zero mean topological dimension \cite[Theorem 5.4]{LW}. We extend their result to sofic case:

\begin{theorem} \label{T-SBP to zero mean dim}
Let a countable sofic group $G$ act continuously on a compact metrizable space $X$. Suppose that the action has the SBP. Let $\Sigma$ be a sofic approximation sequence for $G$. Then $\mdim_\Sigma(X)\le 0$.
\end{theorem}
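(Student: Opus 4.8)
The plan is to mimic the structure of the proof of Lemma~\ref{L-top mean dim upper bound}, but to replace the Rokhlin/F\o lner tiling (unavailable for a general sofic group) by a coordinatewise argument fed by the small-boundary property. Fix a compatible metric $\rho$ on $X$ and a finite open cover $\cU$ of $X$; it suffices to prove $\cD_\Sigma(\cU)\le\theta$ for every $\theta>0$, since then $\mdim_\Sigma(X)=\sup_\cU\cD_\Sigma(\cU)\le 0$. If $\Map(\rho,F,\delta,\sigma_i)$ is empty for all large $i$ for some $F,\delta$ there is nothing to prove, so assume otherwise. By the small-boundary property together with compactness of $X$, choose a finite open cover $\cV=\{V_1,\dots,V_n\}$ refining $\cU$ such that $\partial V$ is small for every $V\in\cV$; since a finite union of small closed sets is small, $Z:=\bigcup_{j=1}^n\partial V_j$ is a small closed subset of $X$, i.e.\ $\mu(Z)=0$ for every $\mu\in M(X,G)$.

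The first step is a concentration estimate: for every $\varepsilon>0$ there are $r_0>0$, a nonempty finite $F\subseteq G$ and $\delta>0$ such that for every good enough sofic approximation $\sigma: G\to\Sym(d)$ and every $\varphi\in\Map(\rho,F,\delta,\sigma)$ one has $|\{a\in[d]:\rho(\varphi(a),Z)<r_0\}|<\varepsilon d$. Indeed, since $\overline{B(Z,r)}\downarrow Z$ as $r\downarrow 0$, upper semicontinuity of $\mu\mapsto\mu(\overline{B(Z,r)})$ and compactness of $M(X,G)$ yield $R>0$ with $\mu(\overline{B(Z,R)})<\varepsilon/2$ for all $\mu\in M(X,G)$; fix $r_0=R/2$ and a continuous $f: X\to[0,1]$ with $1_{\overline{B(Z,r_0)}}\le f\le 1_{B(Z,R)}$, so that $\int f\,d\mu<\varepsilon/2$ for every invariant $\mu$. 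A standard compactness argument — any weak$^*$-limit of empirical measures $\frac1d\sum_{a}\delta_{\varphi(a)}$ of elements $\varphi$ of $\Map(\rho,F,\delta,\sigma)$, taken as $F$ grows, $\delta\to 0$ and $\sigma$ improves, is $G$-invariant — shows that for suitable $F,\delta$ and all good enough $\sigma$ one has $\frac1d\sum_{a}f(\varphi(a))<\varepsilon$ for every $\varphi\in\Map(\rho,F,\delta,\sigma)$; since $\rho(x,Z)<r_0$ forces $f(x)=1$, this gives the claim.

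The second step constructs, for such $F,\delta$ and a good enough $\sigma: G\to\Sym(d)$, a $\cU^d|_{\Map(\rho,F,\delta,\sigma)}$-compatible continuous map into a space of dimension at most $(n+|\cU|)\lceil\varepsilon d\rceil$, after which Lemma~\ref{L-compatible map} finishes the job. Define continuous maps $\tilde f_j: X\to[0,1]$ ($1\le j\le n$) by $\tilde f_j(x)=\min(\rho(x,Z)/r_0,1)$ for $x\in V_j$ and $\tilde f_j(x)=0$ otherwise; these are continuous because $\partial V_j\subseteq Z$, and since $\cV$ covers $X$ one has $\max_j\tilde f_j(x)=\min(\rho(x,Z)/r_0,1)$ for all $x$. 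Fix a partition of unity $\{\zeta_U\}_{U\in\cU}$ subordinate to $\cU$ and set $\chi(x)=1-\max_j\tilde f_j(x)=\max(1-\rho(x,Z)/r_0,0)$, and let $\Psi: \Map(\rho,F,\delta,\sigma)\to([0,1]^n)^{[d]}\times([0,1]^\cU)^{[d]}$ send $\varphi$ to $\big((\tilde f_j(\varphi(a)))_{a,j},(\chi(\varphi(a))\zeta_U(\varphi(a)))_{a,U}\big)$. If $\rho(\varphi(a),Z)\ge r_0$, then the $a$-block of the first factor is a $\{0,1\}$-vector with at least one entry $1$ and the $a$-block of the second factor vanishes; by Step~1 this holds for all but fewer than $\varepsilon d$ indices $a$, so $\Psi(\Map(\rho,F,\delta,\sigma))$ lies in a finite union, over subsets $S\subseteq[d]$ with $|S|\le\lceil\varepsilon d\rceil$, of closed sets, each a product of a factor of dimension $\le(n+|\cU|)|S|$ over $S$ and a finite (zero-dimensional) factor over $[d]\setminus S$; hence $\dim\Psi(\Map(\rho,F,\delta,\sigma))\le(n+|\cU|)\lceil\varepsilon d\rceil$. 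Finally $\Psi$ is $\cU^d|_{\Map}$-compatible: from the data one recovers $\chi(\varphi(a))=1-\max_j(\tilde f_j(\varphi(a)))$ for each $a$; if this is positive one recovers $(\zeta_U(\varphi(a)))_U$, and any $U$ with $\zeta_U(\varphi(a))>0$ satisfies $\varphi(a)\in U$; if it is $0$, then $\rho(\varphi(a),Z)\ge r_0$ and the first factor records exactly the nonempty set $\{j:\varphi(a)\in V_j\}$, so $\varphi(a)\in V_j\subseteq U$ for some $U\in\cU$ (as $\cV\succ\cU$). By Lemma~\ref{L-compatible map}, $\cD(\cU,\rho,F,\delta,\sigma)\le(n+|\cU|)\lceil\varepsilon d\rceil$; dividing by $d$ and passing to $\varlimsup_i$ along $\Sigma$ gives $\cD_\Sigma(\cU)\le\cD_\Sigma(\cU,\rho,F,\delta)\le(n+|\cU|)\varepsilon$, so choosing $\varepsilon=\theta/(n+|\cU|)$ completes the proof.

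The main obstacle I anticipate is the design in Step~2: the functions $\tilde f_j$ must be arranged so that far from $Z$ the vector $(\tilde f_j(\varphi(a)))_j$ degenerates to a $\{0,1\}$-vector — so that the ``good'' coordinates cost no dimension — while staying continuous across the sets $\partial V_j$, and simultaneously the near-$Z$ coordinates must still be resolved into elements of $\cU$ by the auxiliary block $\chi\cdot\zeta_U$; it is exactly this bookkeeping, fed by the concentration estimate of Step~1, that converts ``$Z$ is null for every $G$-invariant measure'' into ``$\cU$ has negligible order on the map spaces''. The compactness argument in Step~1 identifying weak$^*$-limits of empirical measures of elements of $\Map(\rho,F,\delta,\sigma)$ with $G$-invariant measures is routine but should be written out with care.
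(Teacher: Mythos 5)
Your proposal is correct, and its overall architecture is the same as the paper's: convert the SBP into a small closed set $Z$, show via a weak$^*$ compactness argument on empirical measures $\frac1d\sum_a\delta_{\varphi(a)}$ that elements of $\Map(\rho,F,\delta,\sigma)$ spend an $\varepsilon$-small fraction of their coordinates near $Z$, then build a coordinatewise $\cU^d|_{\Map(\rho,F,\delta,\sigma)}$-compatible map whose image lies in a finite union of closed sets of dimension proportional to $\varepsilon d$, and conclude with Lemma~\ref{L-compatible map}. Your Step~1 is essentially a re-derivation of Lemma~\ref{L-measure} and Lemma~\ref{L-measure to orbit} (applied to a continuous Urysohn function dominating $1_{\overline{B(Z,r_0)}}$ rather than to $1_Z$ itself, which is a harmless and in fact slightly cleaner variant), and the quantifier structure of the contradiction argument matches the paper's. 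The genuine difference is in how the SBP feeds the coding step: the paper's Lemma~\ref{L-small to function} produces a partition of unity $\{\phi_j\}$ subordinate to $\cU$ that is $\{0,1\}$-valued off a small closed set, so the compatible map is simply $\varphi\mapsto(\phi_j(\varphi(a)))_{a,j}$ and its image sits in finitely many translated coordinate subspaces of $\Rb^{kd}$; you instead take a refinement $\cV$ with small boundaries, set $Z=\bigcup_j\partial V_j$, and use the cutoffs $\tilde f_j(x)=\min(\rho(x,Z)/r_0,1)1_{V_j}(x)$ (continuous precisely because $\partial V_j\subseteq Z$) together with the auxiliary block $\chi\cdot\zeta_U$ to resolve the near-$Z$ coordinates. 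Your compatibility case analysis (recover $\chi(\varphi(a))$ from the first block; if positive recover the $\zeta_U$-values, if zero read off the nonempty set $\{j:\varphi(a)\in V_j\}$ and use $\cV\succ\cU$) is sound, and the dimension count $(n+|\cU|)\lceil\varepsilon d\rceil$ goes through by the cited facts on finite unions and products. The trade-off is that the paper's route isolates the SBP content in one clean lemma and gets a one-block compatible map, while yours avoids constructing the special partition of unity at the cost of a two-block map and slightly heavier bookkeeping; both yield $\cD_\Sigma(\cU)\le C\varepsilon$ with $C$ independent of $\varepsilon$, hence $\mdim_\Sigma(X)\le 0$.
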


Lindenstrauss showed that if a continuous action of $\Zb$ on a compact metrizable space has a nontrivial minimal factor and has zero mean topological dimension, then it has the SBP \cite[Theorem 6.2]{Lindenstrauss}. Gutman  showed that if a continuous action of $\Zb^d$ for $d\in \Nb$ on a compact metrizable space has a free
zero-dimensional factor and has zero  mean topological dimension, then it has the SBP \cite[Theorem 1.11.1]{Gutman}. It is not clear in what generality the converse of Theorem~\ref{T-SBP to zero mean dim}  is true (see the discussion on page $20$ of \cite{LW}).

We shall adapt the argument of Lindenstrauss and Weiss to our case, by replacing the set $\{(sx)_{s\in H}: H\in G\}\subseteq X^H$ for an approximately left invariant finite subset $H$ of $G$ in their proof with the set $\Map(\rho, F, \delta, \sigma)$.
First we need the following three lemmas.

\begin{lemma} \label{L-measure}
Let a countable group $\Gamma$ act continuously on a compact metrizable space $X$. Let $Z$ be a closed subset of $X$. Then for any $\varepsilon>0$ there is an open neighborhood $U$ of $Z$ with $\sup_{\mu \in M(X, \Gamma)}\mu(U)<\varepsilon+\sup_{\mu \in M(X, \Gamma)}\mu(Z)$.
\end{lemma}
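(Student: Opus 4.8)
The statement to prove is Lemma~\ref{L-measure}: for a closed subset $Z$ of $X$ and any $\varepsilon > 0$, there is an open neighborhood $U$ of $Z$ with $\sup_{\mu \in M(X,\Gamma)}\mu(U) < \varepsilon + \sup_{\mu\in M(X,\Gamma)}\mu(Z)$. The plan is to argue by contradiction using a compactness argument on $M(X,\Gamma)$ together with upper semicontinuity of the evaluation $\mu \mapsto \mu(Z)$ for closed $Z$.

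First I would fix a decreasing sequence of open neighborhoods $U_1 \supseteq U_2 \supseteq \cdots$ of $Z$ with $\bigcap_{n=1}^\infty \overline{U_n} = Z$; such a sequence exists because $X$ is compact metrizable (take $U_n = \{x : \rho(x,Z) < 1/n\}$ for a compatible metric $\rho$, so $\overline{U_n} \subseteq \{x : \rho(x,Z) \le 1/n\}$ and the intersection is $Z$). Set $\beta = \sup_{\mu \in M(X,\Gamma)}\mu(Z)$. If the conclusion fails for $\varepsilon$, then in particular $\sup_{\mu\in M(X,\Gamma)}\mu(U_n) \ge \beta + \varepsilon$ for every $n$. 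If $M(X,\Gamma)$ is empty the statement is vacuous (every such supremum is, by convention or as an empty sup, not exceeding $0 \le \beta + \varepsilon$ — one can simply take $U = X$), so assume $M(X,\Gamma) \ne \emptyset$. For each $n$ choose $\mu_n \in M(X,\Gamma)$ with $\mu_n(U_n) \ge \beta + \varepsilon - 1/n$. Since $M(X,\Gamma)$ is weak$^*$-compact (it is a closed subset of the weak$^*$-compact set of Borel probability measures on the compact metrizable space $X$), pass to a subsequence so that $\mu_{n_k} \to \mu$ weak$^*$ for some $\mu \in M(X,\Gamma)$.

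Now I would estimate $\mu(Z)$ from below to reach a contradiction. Fix $m$. For $k$ large enough that $n_k \ge m$ we have $\mu_{n_k}(U_{n_k}) \le \mu_{n_k}(U_m) \le \mu_{n_k}(\overline{U_m})$, hence $\mu_{n_k}(\overline{U_m}) \ge \beta + \varepsilon - 1/n_k$. Since $\overline{U_m}$ is closed, the map $\nu \mapsto \nu(\overline{U_m})$ is upper semicontinuous for the weak$^*$ topology, so $\mu(\overline{U_m}) \ge \varlimsup_k \mu_{n_k}(\overline{U_m}) \ge \beta + \varepsilon$. This holds for every $m$, and $\overline{U_1} \supseteq \overline{U_2} \supseteq \cdots$ with $\bigcap_m \overline{U_m} = Z$, so by continuity of $\mu$ from above along this decreasing sequence of closed (hence measurable) sets, $\mu(Z) = \lim_m \mu(\overline{U_m}) \ge \beta + \varepsilon$. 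But $\mu \in M(X,\Gamma)$, so $\mu(Z) \le \beta$, a contradiction since $\varepsilon > 0$. Therefore the conclusion holds for the given $\varepsilon$, with $U = U_n$ for $n$ large enough.

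The argument is essentially routine; the only point requiring a little care is the direction of semicontinuity — it is $\nu \mapsto \nu(C)$ for $C$ closed that is upper semicontinuous (equivalently $\nu \mapsto \nu(O)$ for $O$ open is lower semicontinuous), which is exactly what is needed to push the lower bound through the limit — together with making sure the neighborhoods are nested with closures shrinking to $Z$ so that continuity from above applies. No genuine obstacle is anticipated.
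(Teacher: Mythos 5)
Your argument is correct and is essentially the paper's proof: both argue by contradiction, use compactness of $M(X,\Gamma)$ to extract a limit invariant measure from measures nearly saturating shrinking neighborhoods of $Z$, and show this limit gives $Z$ mass at least $\varepsilon+\sup_\mu\mu(Z)$. The only differences are technical conveniences (a metric sequence of neighborhoods, sequential compactness, and portmanteau upper semicontinuity plus continuity from above, where the paper uses a net over all neighborhoods together with regularity and a Urysohn function).
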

\begin{proof} Suppose that this is not true. Denote by $\cS$ the set of all open neighborhoods of $Z$, partially ordered by reversed inclusion. Then $M(X, \Gamma)$ is nonempty, and for each  $U\in \cS$ there is some $\mu_U\in M(X, G)$ with $\mu_U(U)\ge \varepsilon+\sup_{\mu \in M(X, \Gamma)}\mu(Z)$.
Take a limit point
$\nu$ of this net $\{\mu_U\}_{U\in \cS}$ in the compact space $M(X, \Gamma)$. We claim that $\nu(Z)\ge \varepsilon+\sup_{\mu \in M(X, \Gamma)}\mu(Z)$, which is a contradiction.

To prove the claim, by the regularity of $\nu$, it suffices to show $\nu(V)\ge \varepsilon+\sup_{\mu \in M(X, \Gamma)}\mu(Z)$
for every $V\in \cS$.
Take  $U'\in \cS$ with $\overline{U'}\subseteq V$.
Take a continuous function $f: X\rightarrow [0, 1]$ such that $f=1$ on $\overline{U'}$ and $f=0$ on $X\setminus V$.
For any $U\in \cS$ satisfying $U\subseteq U'$, one has $\int_X f\, d\mu_U\ge \mu_U(U)\ge \varepsilon+\sup_{\mu \in M(X, \Gamma)}\mu(Z)$. It follows
that $\nu(V)\ge \int_X f\, d\nu\ge \varepsilon+\sup_{\mu \in M(X, \Gamma)}\mu(Z)$, as desired.
\end{proof}

\begin{lemma} \label{L-small to function}
Consider a continuous action of a countable group $\Gamma$ on a compact metrizable space $X$ with the SBP. Then for any finite open cover $\cU$ of $X$ and any $\varepsilon>0$ there is a partition of unity $\phi_j: X\rightarrow [0, 1]$ for $j=1, \dots, |\cU|$ subordinate to $\cU$ such that $\sup_{\mu\in M(X, \Gamma)}\mu(\overline{\bigcup_{j=1}^{|\cU|}\phi_j^{-1}(0, 1)})<\varepsilon$.
\end{lemma}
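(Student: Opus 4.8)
The plan is to use the small-boundary property to cut out a small open set $W\subseteq X$, outside of which $\cU$ admits a genuine \emph{clopen} refinement-partition, and then to build a partition of unity that is locally $\{0,1\}$-valued off $W$.

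Write $\cU=\{U_1,\dots,U_n\}$ with $n=|\cU|$; we may assume $M(X,\Gamma)\neq\emptyset$, the statement being otherwise vacuous. First, for each $x\in X$ I would choose a member $U_x$ of $\cU$ containing $x$ and then, by the SBP, an open neighborhood $V_x$ of $x$ with $V_x\subseteq U_x$ and $\partial V_x$ small. By compactness take a finite subcover $V_{x_1},\dots,V_{x_m}$ and set $Z=\bigcup_{i=1}^m\partial V_{x_i}$, a closed small subset of $X$. Lemma~\ref{L-measure} applied to $Z$ and $\varepsilon$ gives an open $U\supseteq Z$ with $\sup_{\mu\in M(X,\Gamma)}\mu(U)<\varepsilon$, and by normality of $X$ I would choose an open $W$ with $Z\subseteq W\subseteq\overline W\subseteq U$, so that $\sup_{\mu\in M(X,\Gamma)}\mu(\overline W)<\varepsilon$.

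Since $\partial V_{x_i}\subseteq Z\subseteq W$, each set $V_{x_i}\cap(X\setminus W)$ is clopen in the compact space $X\setminus W$. Disjointifying, the sets $A_i:=\big(V_{x_i}\cap(X\setminus W)\big)\setminus\bigcup_{i'<i}\big(V_{x_{i'}}\cap(X\setminus W)\big)$ form a clopen partition of $X\setminus W$, each $A_i$ being contained in $V_{x_i}\subseteq U_{x_i}$; for $j=1,\dots,n$ let $B_j$ be the union of those $A_i$ with $U_{x_i}=U_j$, so that $B_1,\dots,B_n$ is a clopen partition of $X\setminus W$ with $B_j\subseteq U_j$ (some $B_j$ may be empty). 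Each $B_j$ is closed in $X$, so by Urysohn's lemma I would pick a continuous $h_j\colon X\to[0,1]$ equal to $1$ on $B_j$ and to $0$ on the closed set $(X\setminus U_j)\cup\big((X\setminus W)\setminus B_j\big)$; then $h_j|_{X\setminus W}=1_{B_j}$, and $\sum_{j=1}^n h_j\equiv1$ on $X\setminus W$. To restore positivity of the sum on $W$, put $Y_0=\{x\in X:\sum_j h_j(x)=0\}$, a compact subset of $W$; for each $y\in Y_0$ choose a member $U_y$ of $\cU$ containing $y$ and a continuous $\beta_y\colon X\to[0,1]$ with $\beta_y(y)>0$ and $\supp\beta_y\subseteq W\cap U_y$, extract a finite subfamily $\beta_{y_1},\dots,\beta_{y_r}$ with $\sum_{l=1}^r\beta_{y_l}>0$ on $Y_0$, and set $b_j=\sum_{l:\,U_{y_l}=U_j}\beta_{y_l}$, so $\supp b_j\subseteq W\cap U_j$ and $\sum_j b_j>0$ on $Y_0$. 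Then $H_j:=h_j+b_j$ has $\supp H_j\subseteq U_j$, satisfies $H_j|_{X\setminus W}=1_{B_j}$, and $\sum_j H_j>0$ on all of $X$ (it equals $1$ off $W$, is at least $\sum_j b_j>0$ on $Y_0$, and at least $\sum_j h_j>0$ on $W\setminus Y_0$). Setting $\phi_j=H_j\big/\sum_{j'=1}^n H_{j'}$ yields a partition of unity subordinate to $\cU$ with $\phi_j|_{X\setminus W}=1_{B_j}\in\{0,1\}$; hence $\bigcup_{j=1}^n\phi_j^{-1}(0,1)\subseteq W$ and $\sup_{\mu\in M(X,\Gamma)}\mu\big(\overline{\bigcup_j\phi_j^{-1}(0,1)}\big)\le\sup_{\mu\in M(X,\Gamma)}\mu(\overline W)<\varepsilon$.

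The Urysohn extensions and the final normalization are routine; the delicate point is that these functions are \emph{exactly} $\{0,1\}$-valued on $X\setminus W$, which rests on the clopenness of the $B_j$ in $X\setminus W$ — and that clopenness is precisely the place where the small-boundary property enters.
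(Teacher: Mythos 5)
Your argument is correct and reaches the stated conclusion, but the mechanics differ from the paper's. Both proofs run on the same two ingredients (the SBP applied pointwise inside members of $\cU$, plus Lemma~\ref{L-measure} to engulf small boundaries in open sets of small invariant measure), but the paper never aggregates the boundaries into a single set: it first shrinks $\cU$ to a cover $\cU'$ by open sets with small boundary whose \emph{closures} sit inside members of $\cU$, surrounds each individual boundary $\partial U_j'$ by an open $U_j''$ of measure less than $\varepsilon/|\cU|$, and then obtains the partition of unity directly by the sequential trick $\phi_1=\psi_1$, $\phi_j=\min(\psi_j,1-\sum_{i<j}\phi_i)$ applied to Urysohn bumps $\psi_j$ that equal $1$ on $\overline{U_j'}$ and vanish off $U_j'\cup U_j'''$; since each $\psi_j$ is $\{0,1\}$-valued off $\bigcup_j U_j'''$, so is each $\phi_j$, and no normalization or positivity patch is needed. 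You instead lump all boundaries into one closed small set $Z$, take a single open $W\supseteq Z$ with $\sup_\mu\mu(\overline W)<\varepsilon$, and exploit the genuinely clopen partition $\{B_j\}$ of the compact set $X\setminus W$ refining $\cU$; the price is the extra repair step on $W$ (the bumps $\beta_{y_l}$) and the final division by $\sum_j H_j$, while the gain is a more transparent picture of why the $\phi_j$ are exactly $\{0,1\}$-valued off a single set of small measure. One small caveat: as written, your $h_j$ only satisfy $\{h_j\neq 0\}\subseteq U_j$, so if ``subordinate'' is read, as in Section~3 of the paper, as the \emph{closed} support being contained in $U_j$, you should interpose an open $O_j$ with $B_j\subseteq O_j\subseteq\overline{O_j}\subseteq U_j$ and require $h_j=0$ off $O_j$ as well (your $\beta_y$ already have $\supp\beta_y\subseteq W\cap U_y$); this is a one-line adjustment, and the weaker cozero-set containment you do prove is in fact all that the application in Theorem~\ref{T-SBP to zero mean dim} uses.
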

\begin{proof} For each $x\in X$, take an open neighborhood $V_x$ of $x$ with small boundary such that $\overline{V_x}$ is contained in some element of $\cU$.
Since $X$ is compact, we can cover $X$ by finitely many such $V_x$'s. Note that if two subsets $Y_1$ and $Y_2$ of $X$ have small boundary $\partial Y_1$ and $\partial Y_2$ respectively, then $\partial (Y_1\cup Y_2)\subseteq \partial Y_1\cup \partial Y_2$ is also small. Thus we may take the union of those chosen $V_x$'s whose  closures are contained in one element of $\cU$ to obtain an open cover $\cU'$ of $X$ such that each element of $\cU'$ has small boundary and there is a bijection $\varphi: \cU'\rightarrow \cU$ with $\overline{U'}\subseteq \varphi(U')$ for every $U'\in \cU'$.

By Lemma~\ref{L-measure}, for each $U'\in \cU'$ we can find an open neighborhood $U''$ of the boundary $\partial U'$ of $U'$ such that $\sup_{\mu \in M(X, \Gamma)}\mu(U'')<\varepsilon/|\cU|$. Replacing $U''$ by $U''\cap \varphi(U')$ if necessary, we may assume that $U''\subseteq \varphi(U')$. Take an open neighborhood $U'''$ of $\partial U'$ such that $\overline{U'''}\subseteq U''$. List the elements of $\cU'$ as $U_1', \dots, U_{|\cU|}'$. For each $1\le j\le |\cU|$, take a continuous function $\psi_j: X\rightarrow [0, 1]$ such that $\psi_j=1$ on $\overline{U_j'}$ and $\psi_j=0$ on $X\setminus (U_j'\cup U_j''')$. Now define $\phi_j$ for $1\le j\le |\cU|$ inductively as $\phi_1=\psi_1$, and $\phi_j=\min(\psi_j, 1-\sum_{i=1}^{j-1}\phi_i)$ for $2\le j\le |\cU|$.

We claim that $\phi_1, \dots, \phi_{|\cU|}$ is a partition of unity subordinate to $\cU$. One has $0\le \phi_1=\psi_1\le 1$. For each $2\le j\le |\cU|$, one also has $\phi_j\le 1-\sum_{i=1}^{j-1}\phi_i$. Thus $\sum_{i=1}^j\phi_i\le 1$ for every $1\le j\le |\cU|$. In particular, one get $\phi_j=\min(\psi_j, 1-\sum_{i=1}^{j-1}\phi_j)\ge 0$ for every $2\le j\le |\cU|$. For any $x\in X$, if $x\in U'_1$ then $\phi_1(x)=\psi_1(x)=1$, otherwise $x\in U'_j$ for some $2\le j\le |\cU|$ and thus $\phi_j(x)=\min(\psi_j(x), 1-\sum^{j-1}_{i=1}\phi_i(x))=1-\sum^{j-1}_{i=1}\phi_i(x)$. It follows that
$\sum^{|\cU|}_{j=1}\phi_j=1$. Note that for each $1\le j\le |\cU|$, the support of $\phi_j$ is contained in $\overline{U'_j}\cup \overline{U'''_j}$, which in turn is contained in $\varphi(U'_j)$. This proves our claim.

Now one has
$$\overline{\bigcup_{j=1}^{|\cU|}\phi_j^{-1}(0, 1)}\subseteq \overline{\bigcup_{j=1}^{|\cU|}\psi_j^{-1}(0, 1)}\subseteq \overline{\bigcup_{j=1}^{|\cU|}U_j'''}\subseteq \bigcup_{j=1}^{|\cU|}U_j'',$$
 and hence $\mu(\overline{\bigcup_{j=1}^{|\cU|}\phi_j^{-1}(0, 1)})\le \mu(\bigcup_{j=1}^{|\cU|}U_j'')<\varepsilon$ for every $\mu \in M(X, \Gamma)$.
\end{proof}

\begin{lemma} \label{L-measure to orbit}
Let a countable group $\Gamma$ act continuously on a compact metrizable space $X$. Let $\rho$ be a compatible metric on $X$.
Let $Z$ be a closed subset of $X$, and $\varepsilon>0$. Then there exist a nonempty finite subset $F$ of $\Gamma$ and $\delta>0$ such that, for any map $\sigma$ from $\Gamma$ to $\Sym(d)$ for some $d\in \Nb$, one has
$$ \frac{1}{d}\max_{\varphi \in \Map(\rho, F, \delta, \sigma)}\sum_{a\in [d]}1_Z(\varphi(a))<\varepsilon+\sup_{\mu\in M(X, \Gamma)}\mu(Z).$$
\end{lemma}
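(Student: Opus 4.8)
The plan is to argue by contradiction, converting a family of almost-equivariant maps that charge $Z$ too heavily into a genuine $\Gamma$-invariant Borel probability measure that charges $Z$ too heavily. Write $c=\sup_{\mu\in M(X,\Gamma)}\mu(Z)$ and suppose that no pair $(F,\delta)$ has the stated property. Enumerate $\Gamma=\{s_1,s_2,\dots\}$, put $F_k=\{s_1,\dots,s_k\}$ and $\delta_k=1/k$. For each $k$ the failure of $(F_k,\delta_k)$ produces some $d_k\in\Nb$, a map $\sigma^{(k)}\colon\Gamma\to\Sym(d_k)$, and a $\varphi_k\in\Map(\rho,F_k,\delta_k,\sigma^{(k)})$ with $\frac1{d_k}\sum_{a\in[d_k]}1_Z(\varphi_k(a))\ge\varepsilon+c$. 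In particular $\Map(\rho,F_k,\delta_k,\sigma^{(k)})\neq\emptyset$ (if it were empty the maximum in the statement would be over the empty set and the asserted inequality would hold trivially), and if $M(X,\Gamma)=\emptyset$ we simply read $c=0$, the measure produced below then already contradicting emptiness. Form the empirical measures $\nu_k=\frac1{d_k}\sum_{a\in[d_k]}\delta_{\varphi_k(a)}$, which are Borel probability measures on $X$. Since $X$ is compact metrizable, the space of Borel probability measures on $X$ is weak$^*$ sequentially compact, so after passing to a subsequence we may assume $\nu_k\to\nu$ in the weak$^*$ topology for some Borel probability measure $\nu$ on $X$.

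The first real step is to check that $\nu\in M(X,\Gamma)$. Fix $s\in\Gamma$ and $f\in C(X)$. For all large $k$ we have $s\in F_k$, hence $\rho_2(\varphi_k\circ\sigma^{(k)}_s,\alpha_s\circ\varphi_k)\le\delta_k$, i.e. $\frac1{d_k}\sum_{a\in[d_k]}\rho\big(\varphi_k(\sigma^{(k)}_s(a)),\alpha_s\varphi_k(a)\big)^2\le\delta_k^2$. Given $\eta>0$, Chebyshev's inequality bounds the proportion of $a\in[d_k]$ for which $\rho\big(\varphi_k(\sigma^{(k)}_s(a)),\alpha_s\varphi_k(a)\big)>\eta$ by $\delta_k^2/\eta^2$; hence, writing $\omega_f(\eta)$ for the modulus of continuity of $f$,
\[
\Big|\frac1{d_k}\sum_{a\in[d_k]}f\big(\varphi_k(\sigma^{(k)}_s(a))\big)-\frac1{d_k}\sum_{a\in[d_k]}f\big(\alpha_s\varphi_k(a)\big)\Big|\le\omega_f(\eta)+2\|f\|_\infty\,\delta_k^2/\eta^2 .
\]
Since $\sigma^{(k)}_s$ is a permutation of $[d_k]$, the first average equals $\int_X f\,d\nu_k$ and the second equals $\int_X f\circ\alpha_s\,d\nu_k$. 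Letting $k\to\infty$ along the subsequence and then $\eta\to0$ (using uniform continuity of $f$) gives $\int_X f\,d\nu=\int_X f\circ\alpha_s\,d\nu$. As $s$ and $f$ are arbitrary, $\nu$ is $\Gamma$-invariant, so $\nu\in M(X,\Gamma)$.

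It then remains to derive the contradiction. Because $Z$ is closed and $\nu_k\to\nu$ weak$^*$, the portmanteau theorem gives $\nu(Z)\ge\varlimsup_{k\to\infty}\nu_k(Z)\ge\varepsilon+c$, while $\nu\in M(X,\Gamma)$ forces $\nu(Z)\le c$. This contradiction completes the proof. (Note that, unlike Lemma~\ref{L-measure}, no regularity argument on invariant measures is required here; the entire content is in the passage to an invariant weak$^*$ limit.)

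I expect the only genuine difficulty to be the invariance step: the discrepancy between $\varphi_k\circ\sigma^{(k)}_s$ and $\alpha_s\circ\varphi_k$ is controlled only in the $\rho_2$ (i.e. $\ell^2$-average) sense, so one must convert this into a statement about the averages of a fixed uniformly continuous function — precisely the Chebyshev-plus-modulus-of-continuity split displayed above. The remaining ingredients are routine: weak$^*$ sequential compactness of the probability measures on a compact metrizable space, the relabeling identity $\frac1{d_k}\sum_a g(\varphi_k(\sigma^{(k)}_s(a)))=\int_X g\,d\nu_k$ valid because $\sigma^{(k)}_s$ is a bijection, and upper semicontinuity of $\mu\mapsto\mu(Z)$ on closed sets under weak$^*$ limits. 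Some minor bookkeeping is also needed for degenerate $\sigma$ (empty map space) and for the case $M(X,\Gamma)=\emptyset$.
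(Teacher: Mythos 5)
Your proof is correct and follows essentially the same route as the paper: a contradiction argument that turns the empirical measures $\frac{1}{d}\sum_{a\in[d]}\delta_{\varphi(a)}$ of badly-behaved $\varphi\in\Map(\rho,F,\delta,\sigma)$ into a $\Gamma$-invariant weak$^*$ limit charging $Z$ by at least $\varepsilon+\sup_{\mu\in M(X,\Gamma)}\mu(Z)$. The only differences are cosmetic: you take a sequence along a countable exhaustion $(F_k,\delta_k)$ and invoke the portmanteau theorem for the closed set $Z$, whereas the paper takes a limit point of a net over all pairs $(F,\delta)$ and reproves the closed-set inequality by hand with a bump function and regularity.
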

\begin{proof} Suppose that that for any nonempty finite subset $F$ of $\Gamma$ and any $\delta>0$, there are a map $\sigma_{F, \delta}$ from $\Gamma$ to $\Sym(d)$ for some $d\in \Nb$ and a $\varphi_{F, \delta}\in \Map(\rho, F, \delta, \sigma_{F, \delta})$ with
$$ \frac{1}{d}\sum_{a\in [d]}1_Z(\varphi_{F, \delta}(a))\ge \varepsilon+\sup_{\mu\in M(X, \Gamma)}\mu(Z).$$
Denote by $\mu_{F, \delta}$ the probability measure $\frac{1}{d}\sum_{a\in [d]}\delta_{\varphi_{F, \delta}(a)}$ on $X$. The set of all such $(F, \delta)$ is partially ordered by $(F, \delta)\ge (F', \delta)$ when $F\supseteq F', \delta\le \delta'$. Take a limit point $\nu$ of $\{\mu_{F, \delta}\}_{F, \delta}$ in the compact set of Borel probability measures on $X$. Then there is a net $\{(F_j, \delta_j)\}_{j\in J}$ of pairs such that $\mu_{F_j, \delta_j}\to \nu$ as $j\to \infty$ and for any $(F, \delta)$ one has $(F_j, \delta_j)\ge (F, \delta)$ for all sufficiently large $j\in J$.

We claim that $\nu$ is $G$-invariant. Let $g$ be a continuous $\Rb$-valued function on $X$. For each $\delta>0$, set $C_\delta=\max_{x, y\in X, \rho(x, y)\le \delta}|g(x)-g(y)|$. For any $j\in J$  and any $s\in F_j$, since $\varphi_{F_j, \delta_j}\in \Map(\rho, F_j, \delta_j, \sigma_{F_j, \delta_j})$,
we have $|\cW_s|\ge d(1-\delta_j)$, where
$$\cW_s=\{a\in [d]: \rho(\varphi_{F_j, \delta_j}(sa), s\varphi_{F_j, \delta_j}(a))\le \sqrt{\delta_j}\}.$$
Thus, for any $j\in J$ and $s\in F_j$, one has
\begin{align*}
|\mu_{F_j, \delta_j}(g)-(s\mu_{F_j, \delta_j})(g)|&=\frac{1}{d}|\sum_{a\in [d]}g(\varphi_{F_j, \delta_j}(a))-\sum_{a\in [d]}g(s\varphi_{F_j, \delta_j}(a))|\\
&=\frac{1}{d}|\sum_{a\in [d]}g(\varphi_{F_j, \delta_j}(sa))-\sum_{a\in [d]}g(s\varphi_{F_j, \delta_j}(a))|\\
&\le \frac{1}{d}\sum_{a\in [d]}|g(\varphi_{F_j, \delta_j}(sa))-g(s\varphi_{F_j, \delta_j}(a))|\\
&\le \frac{1}{d} (|\cW_s|C_{\sqrt{\delta_j}}+(d-|\cW_s|)2\|g\|_\infty)\\
&\le C_{\sqrt{\delta_j}}+2\delta_j\|g\|_\infty.
\end{align*}
Letting $j\to \infty$, we have $\mu_{F_j, \delta_j}(g) \to \nu(g)$ and $(s\mu_{F_j, \delta_j})(g)\to (s\nu)(g)$ for every $s\in G$, and $\delta_j, C_{\sqrt{\delta_j}}\to 0$. It follows that $\nu(g)=(s\nu)(g)$. This proves the claim.

Next we claim that $\nu(Z)\ge \varepsilon+\sup_{\mu\in M(X, \Gamma)}\mu(Z)$, which is a contradiction.

To prove the claim, by the regularity of $\nu$, it suffices to show $\nu(U)\ge \varepsilon+\sup_{\mu\in M(X, \Gamma)}\mu(Z)$ for every open neighborhood $U$ of $Z$.
Take a continuous function $f: X\rightarrow [0, 1]$ such that $f=1$ on $Z$ and $f=0$ on $X\setminus U$. For any $j\in J$ one has
\begin{align*}
\mu_{F_j, \delta_j}(f)\ge \frac{1}{d}\sum_{a\in [d]}1_Z(\varphi_{F_j, \delta_j}(a))\ge \varepsilon+\sup_{\mu\in M(X, \Gamma)}\mu(Z).
\end{align*}
Letting $j\to \infty$, we get $\nu(U)\ge \nu(f)\ge \varepsilon+\sup_{\mu\in M(X, \Gamma)}\mu(Z)$ as desired.
\end{proof}

We are ready to prove Theorem~\ref{T-SBP to zero mean dim}.

\begin{proof}[Proof of Theorem~\ref{T-SBP to zero mean dim}]
Fix a compatible metric $\rho$ on $X$.
Let $\cU$ be a finite open cover of $X$. Set $k=|\cU|$. Let $\varepsilon>0$. Take $\phi_1, \dots, \phi_k$ as in Lemma~\ref{L-small to function} for $\cU$ and $\varepsilon$. Set $Z=\overline{\bigcup_{j=1}^k\phi_j^{-1}(0, 1)}$. Then $\sup_{\mu\in M(X, G)}\mu(Z)\le \varepsilon$. By Lemma~\ref{L-measure to orbit} we can find a nonempty finite subset $F$ of $G$ and $\delta>0$ such that, for any map $\sigma$ from $G$ to $\Sym(d)$ for some $d\in \Nb$, one has
$$ \frac{1}{d}\max_{\varphi \in \Map(\rho, F, \delta, \sigma)}\sum_{a\in [d]}1_Z(\varphi(a))<\varepsilon+\sup_{\mu\in M(X, G)}\mu(Z)\le 2\varepsilon.$$

Define $\Phi: X\rightarrow \Rb^k$ by $\Phi(x)=(\phi_1(x), \dots, \phi_k(x))$. Define $\Phi_{F, \delta, \sigma}: \Map(\rho, F, \delta, \sigma)\rightarrow \Rb^{kd}$ by
$$ \Phi_{F, \delta, \sigma}(\varphi)=(\Phi(\varphi(1)), \dots, \Phi(\varphi(d))).$$

Let $e^i_j$, $i=1, \dots, d, j=1, \dots, k$ be the standard basis of $\Rb^{kd}$. For every $I\subseteq [d]$ with $|I|\le 2\varepsilon d$ and every
$\xi\in \{0, 1\}^{kd}$, define
$$C(I, \xi)={\rm span} \{e^i_j: i\in I, 1\le j\le k\}+\xi.$$
Then
$$ \Phi_{F, \delta, \sigma}(\Map(\rho, F, \delta, \sigma))\subseteq \bigcup_{|I|\le 2\varepsilon d, \xi\in \{0, 1\}^{kd}}C(I, \xi).$$
Note that $\Phi_{F, \delta, \sigma}$ is $\cU^d|_{\Map(\rho, F, \delta, \sigma)}$-compatible, and $\bigcup_{|I|\le 2\varepsilon d, \xi \in \{0, 1\}^{kd}}C(I, \xi)$ is a finite union of at most $2\varepsilon kd$ dimensional affine subspaces of $\Rb^{kd}$.
Since the union of finitely many closed subsets of dimension at most $2\varepsilon kd$ has dimension at most $2\varepsilon kd$ \cite[page 30 and Theorem V.8]{HW},
from Lemma~\ref{L-compatible map} we get
$$ \cD(\cU, \rho, F, \delta, \sigma)\le 2\varepsilon kd.$$
It follows that $\cD_\Sigma(\cU)\le 0$, and hence $\mdim_\Sigma(X)\le 0$.
\end{proof}

We leave the following

\begin{question} \label{Q-SBP and zero dimension}
Could one strengthen Theorem~\ref{T-SBP to zero mean dim} to get $\mdim_{\Sigma, \rM}(X)=0$?
\end{question}



\begin{thebibliography}{999}
\Small



\bibitem[Bol]{Boltyanskii}
V. Boltyanski\v{i}. An example of a two-dimensional compactum whose topological square is three-dimensional.
{\it Doklady Akad. Nauk SSSR (N.S.)} {\bf 67}, (1949), 597--599; translation in {\it Amer. Math. Soc. Translation} {\bf 1951}, (1951), no. 48, 3--6.

\bibitem[Bow09]{Bowen09}
L. Bowen. Personal communication. 

\bibitem[Bow10]{Bowen10}
L. Bowen. Measure conjugacy invariants for actions of countable sofic groups.
{\it J.\ Amer.\ Math.\ Soc.} {\bf 23}  (2010), 217--245.



\bibitem[Bow12]{Bowen10a}
L. Bowen. Sofic entropy and amenable groups. {\it Ergod.\ Th.\ Dynam.\ Sys.} {\bf 32} (2012), no. 2, 427--466.




\bibitem[BL]{BL}
L. Bowen and H. Li. Harmonic models and spanning forests of residually finite groups. {\it J. Funct. Anal.} {\bf 263} (2012), no. 7, 1769--1808.

\bibitem[CC]{CC}
T. Ceccherini-Silberstein and M. Coornaert. {\it Cellular Automata and Groups}. Springer Monographs in Mathematics. Springer-Verlag, Berlin, 2010.


\bibitem[Coo]{Coornaert}
M. Coornaert. Dimension topologique et syst\`{e}mes dynamiques. (French) [Topological dimension and dynamical systems] Cours Sp\'{e}cialis\'{e}s [Specialized Courses], 14. Soci\'{e}t\'{e} Math\'{e}matique de France, Paris, 2005.




\bibitem[CK]{CK}
M. Coornaert and F. Krieger. Mean topological dimension for actions of discrete amenable groups. {\it Discrete Contin. Dyn. Syst.} {\bf 13} (2005), no. 3, 779--793.

\bibitem[ES05]{ES05}
G. Elek and E. Szab\'{o}. Hyperlinearity, essentially free actions and $L^2$-invariants. The sofic property.
{\it Math. Ann.} {\bf 332} (2005), no. 2, 421--441.

\bibitem[ES06]{ES06}
G. Elek and E. Szab\'{o}. On sofic groups. {\it J. Group Theory} {\bf 9} (2006), no. 2, 161--171.


\bibitem[GM]{GM}
E. Glasner and M. Megrelishvili. Hereditarily non-sensitive dynamical systems and linear representations.
{\it Colloq. Math.} {\bf 104} (2006), no. 2, 223--283.


\bibitem[Gro99a]{Gromov99}
M. Gromov. Endomorphisms of symbolic algebraic varieties. {\it J. Eur.\ Math.\ Soc.} {\bf 1} (1999), 109--197.

\bibitem[Gro99b]{Gromov}
M. Gromov. Topological invariants of dynamical systems and spaces of holomorphic maps. I.
{\it Math. Phys. Anal. Geom.} {\bf 2} (1999), no. 4, 323--415.

\bibitem[Gut]{Gutman}
Y. Gutman. Embedding $\Zb^k$-actions in cubical shifts and $\Zb^k$-symbolic extensions. {\it Ergod.\ Th.\ Dynam.\ Sys.} {\bf 31} (2011), no. 2, 383--403.


\bibitem[HW]{HW}
W. Hurewicz and H. Wallman. {\it Dimension Theory}.
Princeton Mathematical Series, v. 4. Princeton University Press, Princeton, N. J., 1941.

\bibitem[Kel]{Kelley}
J. L. Kelley. {\it General Topology}.  Graduate Texts in Mathematics, No. 27. Springer-Verlag, New York-Berlin, 1975.


\bibitem[KL11]{KL11}
D. Kerr and H. Li. Entropy and the variational principle for actions of sofic groups. {\it Invent. Math.} {\bf 186} (2011), no. 3, 501--558.

\bibitem[KL13]{KerLi10amenable}
D. Kerr and H. Li. Soficity, amenability, and dynamical entropy. arXiv:1008.1429. {\it Amer. J. Math.} to appear.

\bibitem[Kri09]{Krieger09}
F. Krieger. Minimal systems of arbitrary large mean topological dimension. {\it Israel J. Math.} {\bf 172} (2009), 425--444.



\bibitem[Kri06]{Krieger06}
F. Krieger. Groupes moyennables, dimension topologique moyenne et sous-d\'{e}calages. (French) [Amenable groups, mean topological dimension and subshifts]
{\it Geom. Dedicata} {\bf 122} (2006), 15--31.



\bibitem[Li]{Li}
H. Li. Compact group automorphisms, addition formulas and Fuglede-Kadison determinants. {\it Ann. of Math. (2)} {\bf 176} (2012), no. 1, 303--347.


\bibitem[Lin]{Lindenstrauss}
E. Lindenstrauss. Mean dimension, small entropy factors and an embedding theorem.
{\it Inst. Hautes \'{E}tudes Sci. Publ. Math.} {\bf 89} (1999), 227--262.

\bibitem[LW]{LW}
E. Lindenstrauss and B. Weiss. Mean topological dimension.
{\it Israel J.\ Math.} {\bf 115} (2000), 1--24.





\bibitem[Nag]{Nagata}
J. Nagata. {\it Modern Dimension Theory}. Revised edition. Sigma Series in Pure Mathematics, 2. Heldermann Verlag, Berlin, 1983.



\bibitem[OW]{OW}
D. S. Ornstein and B. Weiss. Entropy and isomorphism theorems for
actions of amenable groups.  {\it J. Analyse Math.} {\bf 48} (1987),
1--141.

\bibitem[Pes]{Pestov}
V. G. Pestov. Hyperlinear and sofic groups: a brief guide. {\it Bull. Symbolic Logic} {\bf 14} (2008), no. 4, 449--480.


\bibitem[PS]{PS}
L. Pontrjagin and L. Schnirelmann. Sur une propri\'{e}t\'{e} m\'{e}trique de la dimension.  {\it Ann. of Math. (2)} {\bf 33} (1932), no. 1, 156--162.
Translation in {\it Classics on Fractals}, pp. 133--142. Edited by G. A. Edgar. Studies in Nonlinearity. Westview Press. Advanced Book Program, Boulder, CO, 2004.


\bibitem[SW]{SW}
M. Shub and B. Weiss. Can one always lower topological entropy? {\it Ergod.\ Th.\ Dynam.\ Sys.} {\bf 11} (1991), no. 3, 535--546.




\bibitem[Wei]{Weiss00}
B. Weiss. Sofic groups and dynamical systems. In: {\it Ergodic Theory and Harmonic Analysis (Mumbai, 1999)}.
{\it Sankhy\={a} Ser. A} {\bf 62} (2000), 350--359.


\end{thebibliography}
\end{document}